\documentclass[12pt]{article}
\usepackage{a4,latexsym,amsmath,amssymb,amsthm,enumerate,eucal,
  stmaryrd,subfigure}

\usepackage{graphicx}
\newtheorem{theorem}{Theorem}
\newtheorem{lemma}[theorem]{Lemma}
\newtheorem{proposition}[theorem]{Proposition}
\newtheorem{claim}{Claim}
\newtheorem{corollary}[theorem]{Corollary}
\newtheorem{conjecture}[theorem]{Conjecture}

\newtheorem{observation}[theorem]{Observation}

\newcommand\Setx[1] {\left\{{#1}\right\}}
\newcommand\Set[2] {\left\{{#1}:\,{#2}\right\}}

\newcommand\size[1] {\left|{#1}\right|}

\newcommand{\AAA}{\mathcal{A}}
\newcommand{\BB}{\mathcal{B}}

\newcommand{\PP}{\mathcal{P}}
\newcommand{\PPP}{\mathbb{P}}

\newcommand{\RR}{\mathcal{R}}
\renewcommand{\SS}{\mathcal{S}}

\newcommand{\inner}[3]{{#1}[{#3}]^{#2}}
\newcommand{\barr}[1]{\overline {#1}}
\newcommand{\subst}[2]{{#1}|{#2}}
\newcommand{\sdpos}[1]{\AAA_+(#1)}
\newcommand{\sdneg}[1]{\AAA_-(#1)}
\newcommand{\sdposs}[2]{\AAA_+(#1;#2)}
\newcommand{\sdnegg}[2]{\AAA_-(#1;#2)}
\newcommand{\leqx}{\preceq_L}

\newcommand{\sub}{\subseteq}

\newcommand{\conn}[1]{\mathit{conn}(#1)}
\newcommand{\iconn}[1]{\mathit{conn}_2(#1)}
\newcommand{\gr}[1]{\mathit{Gr(#1)}}

\newcommand{\case}[2]{\par\medskip\textbf{Case #1:}
  \emph{#2}\par\smallskip}

\newcommand{\fig}[1]{\includegraphics[page=#1]{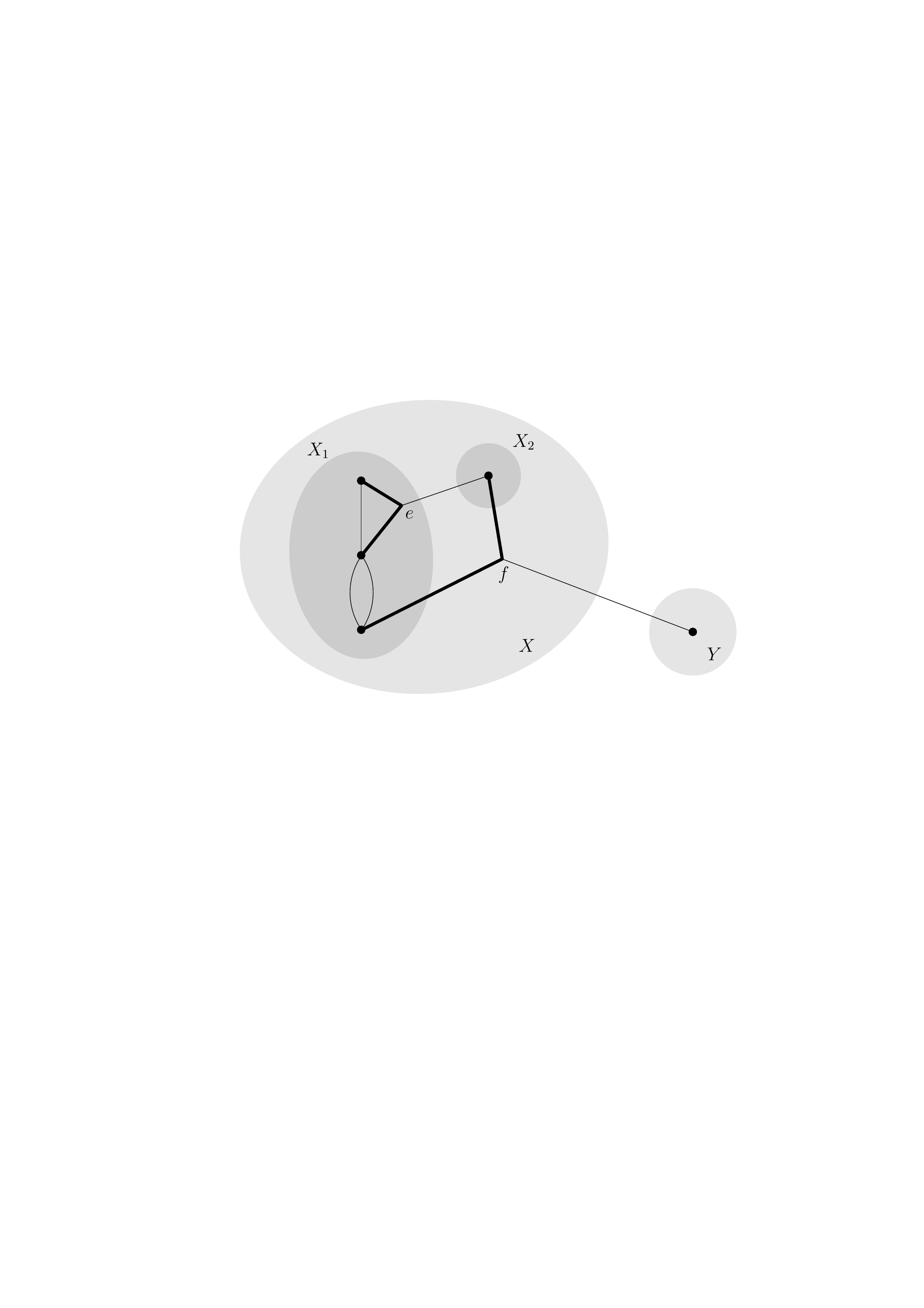}}
\newcommand{\sfig}[2]{\begin{subfloat}\fig{#1}\caption{#2}\end{subfloat}}
\newcommand{\sfignarrow}[3]{
    \begin{subfloat}\hspace*{#3}\fig{#1}\hspace*{#3}%
    \caption{#2}\end{subfloat}}

\newcommand{\claimproofend}{\hspace*{.1mm}\hspace{\fill}$\triangle$}
\newenvironment{claimproof}{}{\claimproofend\par\vspace{2mm}}

%%%%%%%%%%%%%%%%%%%%%%%%%%%%%%%%%%%%%%%%%%%%%%%%%%%%%%%%%%%%%%%%%%%%%%
% subfloat environment
% taken from the subfigure doc by S. D. Cochran

\newbox\subfigbox
\makeatletter
\newenvironment{subfloat}
{\def\caption##1{\gdef\subcapsave{\relax##1}}
  \let\subcapsave=\@empty
  \let\sf@oldlabel=\label
  \def\label##1{\xdef\sublabsave{\noexpand\label{##1}}}
  \let\sublabsave\relax
  \setbox\subfigbox\hbox
  \bgroup}
{\egroup
  \let\label=\sf@oldlabel
  \subfigure[\subcapsave]{\box\subfigbox\sublabsave}}
\makeatother
\newcommand{\hfilll}{\hspace*{0mm}\hspace{\fill}\hspace*{0mm}}

%%%%%%%%%%%%%%%%%%%%%%%%%%%%%%%%%%%%%%%%%%%%%%%%%%%%%%%%%%%%%%%%%%%%%%

\title{\textbf{Hamilton cycles\\in 5-connected line graphs}} %
\author{Tom\'{a}\v{s} Kaiser$^{\:1,2}$\\
  Petr Vr\'{a}na$^{\:1,3}$}

\date{}
% \date{March 30, 2011}
% \date{September 20, 2010}

\begin{document}
\maketitle

\footnotetext[1]{Department of Mathematics and Institute for
  Theoretical Computer Science, University of West Bohemia,
  Univerzitn\'{\i}~8, 306~14~Plze\v{n}, Czech Republic. Supported by
  project 1M0545 and Research Plan MSM 4977751301 of the Czech
  Ministry of Education.}

\footnotetext[2]{Partially supported by grant GA\v{C}R 201/09/0197 of
  the Czech Science Foundation. E-mail: \texttt{kaisert@kma.zcu.cz}.}

\footnotetext[3]{E-mail: \texttt{vranap@kma.zcu.cz}.}

\begin{abstract}
  A conjecture of Carsten Thomassen states that every 4-connected line
  graph is hamiltonian. It is known that the conjecture is true for
  7-connected line graphs. We improve this by showing that any
  5-connected line graph of minimum degree at least 6 is
  hamiltonian. The result extends to claw-free graphs and to
  Hamilton-connectedness.
\end{abstract}

%%%%%%%%%%%%%%%%%%%%%%%%%%%%%%%%%%%%%%%%%%%%%%%%%%%%%%%%%%%%%%%%%%%%%%

\section{Introduction}
\label{sec:intro}

Is there a positive constant $C$ such that every $C$-connected graph
is hamiltonian? Certainly not, as shown by the complete bipartite
graphs $K_{n,n+1}$, where $n$ is large. The situation may change,
however, if the problem is restricted to graphs not containing a
specified forbidden induced subgraph. For instance, for the class of
\emph{claw-free} graphs (those not containing an induced $K_{1,3}$),
Matthews and Sumner~\cite{bib:MS-hamiltonian} conjectured the
following in 1984:

\begin{conjecture}[Matthews and Sumner]\label{conj:MS}
  Every 4-connected claw-free graph is hamiltonian.
\end{conjecture}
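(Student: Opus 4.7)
The plan is to prove Conjecture~\ref{conj:MS} by a two-step reduction followed by an inductive construction of a dominating closed trail. First, by Ryj\'{a}\v{c}ek's closure operation, any claw-free graph $G$ can be transformed into a line graph $L(H)$ (with $H$ triangle-free) such that $G$ is hamiltonian iff $L(H)$ is hamiltonian, and so that the connectivity is preserved. Hence the conjecture is equivalent to Thomassen's: every 4-connected line graph is hamiltonian. By the Harary--Nash-Williams characterization, it suffices to show that every essentially 4-edge-connected graph $H$ (one whose edge cuts of size at most 3 all isolate a single vertex) contains a \emph{dominating closed trail}, i.e. a closed trail meeting every edge.

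The core of the argument is an induction on $\size{E(H)}$ using Catlin's reduction. I would contract all maximal collapsible subgraphs of $H$ to obtain the reduced graph $H'$; since collapsibility is compatible with the existence of dominating closed trails, it suffices to find one in $H'$. In $H'$ every vertex has degree $2$ or at least $3$, and forbidden substructures (2-cycles, $K_{2,3}$, triangles with a degree-$2$ vertex, etc.) are absent. I would then seek a local ``reduction operation'' that produces a strictly smaller essentially $4$-edge-connected multigraph $H''$ such that a dominating closed trail in $H''$ lifts to one in $H'$. Natural candidates are: (i) contracting an edge both of whose endpoints have degree $\geq 4$; (ii) splitting off a pair of edges at a vertex of degree $\geq 4$ while preserving essential $4$-edge-connectivity (in the spirit of Mader); and (iii) handling small essential $4$-edge cuts by cut-and-paste on both sides via an induction on the number of vertices.

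The crux, and the step I expect to be the main obstacle, is the treatment of vertices of degree exactly $3$ in $H'$. Around such a vertex $v$ the three incident edges form a $3$-edge cut that is \emph{trivial} (it isolates $\Setx{v}$), so $v$ slips past essential $4$-edge-connectivity; yet any dominating closed trail must either pass through $v$ using two of its three incident edges or else cover the third edge from the other endpoint. Previous results avoid this issue by requiring essential edge-connectivity $7$, which forces minimum degree $\geq 7$. To descend to $4$, I plan a careful classification of the neighborhoods of degree-$3$ vertices in $H'$, coupled with a global argument that pairs them up along a suitable $T$-join, so that the splitting-off and contraction operations above can be applied simultaneously without destroying essential $4$-edge-connectivity. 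If this local-global trade-off can be made to balance, the induction closes; otherwise one would at least obtain the partial result under an extra minimum-degree hypothesis, which is the route actually taken in the main theorem of the paper.
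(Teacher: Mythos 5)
This statement is a \emph{conjecture}, not a theorem of the paper: the authors explicitly state that Conjecture~\ref{conj:MS} (and the equivalent Conjecture~\ref{conj:thomassen}) remains open, and the paper proves only the partial result that 5-connected line graphs of minimum degree at least 6 are hamiltonian. So there is no proof in the paper for you to match, and your proposal does not supply one either.

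Your two preliminary reductions are sound and standard: Ryj\'{a}\v{c}ek's closure reduces the claw-free case to line graphs, and the Harary--Nash-Williams correspondence reduces hamiltonicity of $L(H)$ to the existence of a dominating connected eulerian subgraph (dominating closed trail) in an essentially 4-edge-connected $H$. The gap is everything after that. The inductive core --- Catlin reduction followed by a ``local reduction operation'' that preserves essential 4-edge-connectivity and lifts dominating closed trails --- is not carried out, and the step you yourself flag as the crux (the treatment of degree-3 vertices, to be handled by ``a careful classification \dots coupled with a global argument that pairs them up along a suitable $T$-join, so that the splitting-off and contraction operations \dots can be applied simultaneously'') is precisely where the known difficulty of the Dominating Cycle Conjecture lives; no construction, no invariant, and no verification that the proposed operations preserve the hypotheses is given. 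Your closing sentence (``If this local-global trade-off can be made to balance, the induction closes; otherwise \dots'') concedes that the argument is conditional. As a research plan it is reasonable and correctly locates the obstacle, but it is not a proof; for comparison, the paper sidesteps Catlin-style reduction entirely and instead encodes degree-3 vertices as 3-hyperedges of a 3-hypergraph, then proves the existence of a quasitree with tight complement (Theorem~\ref{t:spanning-tree}) under connectivity hypotheses that force the extra assumptions in Theorem~\ref{t:main}.
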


The class of claw-free graphs includes all line graphs. Thus,
Conjecture~\ref{conj:MS} would in particular imply that every
4-connected line graph is hamiltonian. This was stated at about the
same time as a separate conjecture by
Thomassen~\cite{bib:Tho-reflections}:
\begin{conjecture}\label{conj:thomassen}
  Every 4-connected line graph is hamiltonian.
\end{conjecture}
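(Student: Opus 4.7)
The plan is to pass from the line graph to its source graph via the classical Harary--Nash-Williams correspondence, and then to construct the required dominating closed trail by combining Catlin's collapsible-subgraph reduction with a careful analysis of small edge cuts.

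First I would invoke the reformulation: for a graph $H$ with no isolated edges, the line graph $L(H)$ is hamiltonian if and only if $H$ carries a \emph{dominating closed trail}, meaning a closed trail $T$ such that every edge of $H$ has at least one endpoint in $V(T)$. The hypothesis that $L(H)$ be $4$-connected translates to $H$ being \emph{essentially $4$-edge-connected}: every edge cut of size at most~$3$ isolates a single vertex or a single edge. Conjecture~\ref{conj:thomassen} thus becomes the statement that every essentially $4$-edge-connected graph admits a dominating closed trail.

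Next I would apply Catlin's reduction, contracting each maximal collapsible subgraph of $H$ to obtain the \emph{reduced} graph $H^{\ast}$. Since Catlin's operation is faithful for the dominating-trail problem, $H$ has a dominating closed trail if and only if $H^{\ast}$ does; moreover $H^{\ast}$ inherits essential $4$-edge-connectivity, contains no nontrivial collapsible subgraph, and is triangle-free apart from a controlled set of contraction vertices. The proof then proceeds by induction on $|V(H^{\ast})|$: locate a small local configuration---a short cycle, a $3$-edge cut isolating an edge, or a vertex of bounded degree with a prescribed neighbourhood pattern---that can be contracted or split off while preserving essential $4$-edge-connectivity, and lift a dominating closed trail obtained from the smaller instance back through the contraction using Catlin's machinery.

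The main obstacle, and the reason the conjecture has so far resisted proof, is precisely the inductive step at the $4$-edge-connected threshold. For essentially $5$-edge-connected graphs one obtains spanning closed trails directly from the results of Catlin, Jaeger, and Zhan, and it is exactly this tool which, via a minimum-degree hypothesis that forces essential $5$-edge-connectivity, drives the main theorem of the present paper. Dropping to essential $4$-edge-connectivity there exist reduced examples (derived from the Petersen graph) with no spanning closed trail, so reduction alone does not suffice. The genuinely new ingredient must come from analysing how $3$-edge cuts of $H^{\ast}$ sit inside one another: essential $4$-edge-connectivity ensures that each such cut isolates only an edge, and I would attempt a cut-splitting procedure combined with a parity argument on the resulting components to produce either a strictly smaller instance or one of a finite list of base cases verifiable by hand. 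Ruling out Petersen-like obstructions on grounds specific to line graphs of $4$-connected graphs is the step I expect to be the crux of the argument, and the one where genuinely new ideas beyond the present paper's techniques will be required.
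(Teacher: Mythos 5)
The statement you are asked to prove is Conjecture~\ref{conj:thomassen}, which is an \emph{open conjecture}: the paper contains no proof of it, and explicitly says so. The paper's actual contribution is the weaker Theorem~\ref{t:main} (5-connected line graphs of minimum degree at least~6 are hamiltonian), obtained by an entirely different mechanism --- quasitrees with tight complement in an auxiliary 3-hypergraph --- rather than by Catlin's collapsible-graph reduction. So there is no ``paper's own proof'' to compare against, and your text cannot be assessed as a correct alternative proof.

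More importantly, your proposal is not a proof. The first two steps (the Harary--Nash-Williams translation to dominating closed trails in essentially 4-edge-connected graphs, and Catlin's reduction) are standard and correct, but they only restate the problem: the resulting statement is equivalent to the Dominating Cycle Conjecture, which is known to be equivalent to Conjecture~\ref{conj:thomassen} itself (as the paper notes via Fleischner, Fleischner--Jackson, and Ryj\'a\v{c}ek). The decisive inductive step --- your proposed ``cut-splitting procedure combined with a parity argument'' that handles nested 3-edge-cuts and rules out Petersen-like obstructions --- is never specified, and you yourself concede it would require ``genuinely new ideas.'' A concrete reason this step cannot be waved through: Catlin-type arguments produce \emph{spanning} closed trails only under edge-connectivity hypotheses at least one higher than what essential 4-edge-connectivity provides, and the reduced graphs at this threshold need not have spanning closed trails at all; no known local operation (contraction, splitting off, or deletion) is guaranteed to preserve essential 4-edge-connectivity while strictly shrinking the instance in all cases. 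Until that step is supplied, what you have is a research programme, not a proof, and the conjecture remains open.
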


Although formally weaker, Conjecture~\ref{conj:thomassen} was shown to
be equivalent to Conjecture~\ref{conj:MS} by
Ryj\'{a}\v{c}ek~\cite{bib:Ryj-closure}. Several other statements are
known to be equivalent to these conjectures, including the Dominating
Cycle Conjecture~\cite{bib:Fle-cycle,bib:FJ-note}; for more work
related to these equivalences, see
also~\cite{bib:Bro-contractible,bib:KS-cycles,bib:Koch-equivalence}.

Conjectures~\ref{conj:MS} and~\ref{conj:thomassen} remain open. The
best general result to date in the direction of
Conjecture~\ref{conj:thomassen} is due to
Zhan~\cite{bib:Zha-hamiltonian} and B. Jackson (unpublished):

\begin{theorem}\label{t:zhan}
  Every 7-connected line graph is hamiltonian.
\end{theorem}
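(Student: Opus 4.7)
The plan is to translate hamiltonicity of $L(G)$ into the existence of a dominating closed trail in the underlying graph $G$, and then exploit the strong edge-connectivity forced on $G$ by the $7$-connectivity of $L(G)$. Concretely, I would invoke the Harary--Nash-Williams characterization: for a graph $G$ with $\size{E(G)} \geq 3$, $L(G)$ is hamiltonian if and only if $G$ contains a closed trail $T$ whose vertex set meets every edge of $G$. So, fixing $G$ with $L(G)$ $7$-connected, the goal becomes to produce such a dominating closed trail in $G$.

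Next, I would transfer the connectivity hypothesis to $G$. A standard observation shows that a vertex cut in $L(G)$ corresponds either to an edge cut of $G$ separating two components each containing an edge, or to the star of a single low-degree vertex. Hence $L(G)$ being $7$-connected forces $G$ to be \emph{essentially $7$-edge-connected}: every edge cut of size at most $6$ isolates a single vertex or a single edge. In particular $\deg_G(u) + \deg_G(v) \geq 9$ for every edge $uv$, so all but a few vertices have degree at least $7$.

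The third step is to construct the required trail. I would use Catlin's theory of collapsible subgraphs: contracting each maximal collapsible subgraph of $G$ to a point produces a reduced graph $G'$ such that $G$ has a spanning Eulerian subgraph if and only if $G'$ does. The essential $7$-edge-connectedness of $G$ passes to $G'$, supplying enough edge-disjoint spanning trees (via Nash-Williams--Tutte) to force $G'$ to be trivial, or alternatively to admit, by Jaeger-type supereulerian theorems, a spanning closed trail. Pulling this back yields a closed trail in $G$ that spans a large vertex set.

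The main obstacle is ensuring that this closed trail \emph{dominates} every edge of $G$, rather than merely spanning many vertices. Small essential edge cuts isolating a single low-degree vertex $v$ may be invisible to the global reduction, yet every edge at $v$ must meet the trail. I would address each such $v$ locally: its incident edges form a clique of size $\deg_G(v)$ in the $7$-connected graph $L(G)$, which forces rich connectivity in $N_G(v)$, so one can reroute the global trail through a vertex of $N_G(v)$. Orchestrating these local modifications consistently across all exceptional vertices simultaneously is the technical heart of the argument.
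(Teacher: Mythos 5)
You should know that the paper itself does not prove this statement: it is quoted as a known result of Zhan and Jackson, and within this paper it is subsumed by Theorem~\ref{t:main} (a 7-connected line graph is 5-connected with minimum degree at least $7\geq 6$), whose proof goes through Proposition~\ref{p:essentially}. Measured against either the classical proof or the paper's machinery, your sketch has a genuine gap in its third step. You apply Catlin reduction and then Nash-Williams--Tutte or Jaeger-type supereulerian theorems to $G$ (or its reduction $G'$) to get a \emph{spanning} closed trail. But essential 7-edge-connectedness does not make $G$ 4-edge-connected: $G$ may have vertices of degree 1, 2 or 3, these are untouched by the reduction, so $G'$ is not 4-edge-connected and neither tree-packing nor the supereulerian theorems apply to it. Worse, a spanning closed trail of $G$ need not exist at all (a pendant edge at a vertex of degree $\geq 8$ is perfectly compatible with $L(G)$ being 7-connected), so ``spanning'' is the wrong target from the start: what one needs is a connected eulerian subgraph spanning only the vertices of degree at least 4.

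Once the target is set correctly, the obstacle you single out as ``the technical heart'' (domination) is in fact the easy part and needs no local rerouting: if an edge $uv$ met no vertex of such a subgraph, both $u$ and $v$ would have degree at most 3, giving the edge weight $\deg_G(u)+\deg_G(v)-2\leq 4<7$, contradicting the minimum degree of $L(G)$ --- this is exactly how Theorem~\ref{t:main} is deduced from Proposition~\ref{p:essentially}. The genuinely hard step, which your outline never engages, is how to deal with the degree-3 vertices when establishing enough edge-connectivity of the high-degree part so that a tree-packing argument applies (two edge-disjoint spanning trees, one providing connectivity and the other a parity-correcting forest, cf.\ Lemma~\ref{l:join}); Zhan and Jackson handle this with auxiliary constructions attached to degree-3 vertices, and the present paper handles it by encoding them as 3-hyperedges and producing a quasitree with tight complement. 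Your closing plan --- rerouting the global trail through $N_G(v)$ for each exceptional $v$ and ``orchestrating these modifications consistently'' --- is not an argument: a reroute can destroy parity or connectivity elsewhere, and nothing in the sketch shows all exceptional vertices can be accommodated simultaneously. As written, the proposal does not yield a proof.
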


In fact, the result of \cite{bib:Zha-hamiltonian} shows that any
7-connected line graph $G$ is \emph{Hamilton-connected} --- it
contains a Hamilton path from $u$ to $v$ for each choice of distinct
vertices $u,v$ of $G$.

For 6-connected line graphs, hamiltonicity has been proved only for
restricted classes of graphs
\cite{bib:HTW-hamilton,bib:Zha-hamiltonicity}. Many papers investigate
the Hamiltonian properties of other special types of line graphs; see,
e.g., \cite{bib:Lai-every,bib:Lai-every2} and the references given
therein.

The main result of the present paper is the following improvement of
Theorem~\ref{t:zhan}:
\begin{theorem}\label{t:main}
  Every 5-connected line graph with minimum degree at least 6 is
  hamiltonian.
\end{theorem}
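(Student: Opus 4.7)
The plan is to work in the preimage graph rather than in the line graph itself. Let $G = L(H)$ and recall the Harary--Nash-Williams translation: $G$ is hamiltonian iff $H$ contains a \emph{dominating closed trail} (DCT), that is, a closed trail $T$ such that every edge of $H$ has at least one endpoint on $V(T)$. Under this dictionary, the assumption that $G$ is $5$-connected translates into $H$ being \emph{essentially $5$-edge-connected} (every edge cut separating two edges has size $\geq 5$), and $\delta(G) \geq 6$ translates into $d_H(u) + d_H(v) \geq 8$ for every edge $uv$ of $H$. So Theorem~\ref{t:main} becomes: every essentially $5$-edge-connected graph $H$ satisfying $d(u)+d(v)\geq 8$ on every edge contains a DCT.

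I would attack this by taking a minimum counterexample $H$ (minimum number of edges, say) and combining two standard reduction mechanisms. First, one uses Catlin's theory of \emph{collapsible subgraphs}: contracting a collapsible subgraph $S$ preserves the existence/nonexistence of a DCT, and the resulting graph $H/S$ is still essentially $5$-edge-connected (after noting that the small edge cuts in $H/S$ would lift to small cuts in $H$). Hence one may assume $H$ is Catlin-reduced, so $H$ has no nontrivial collapsible subgraph and, in particular, no triangle and no two adjacent vertices of degree $\leq 3$. Second, one uses a local reduction that exploits the degree condition: vertices of degree $3$ or $4$ in $H$ are precisely those whose neighbourhoods in $G$ form cliques in the line graph closure. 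I would enumerate the ways such low-degree vertices can attach to the rest of $H$ --- four cases for $d(v)\in\{3,4\}$ and for the multiset of neighbour degrees forced by $d(u)+d(v)\geq 8$ --- and in each case either perform a local ``ear'' contraction that yields a strictly smaller instance satisfying the same hypotheses, or exhibit a forbidden cut of size $<5$ separating two edges, contradicting essential $5$-edge-connectivity.

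Once these reductions are exhausted, $H$ will be reduced to a graph in which every vertex has degree at least $5$ (since $d(u)+d(v)\geq 8$ together with the absence of adjacent low-degree vertices pushes degrees up), with no short cycle of the kind killed by Catlin reduction. At this point I would invoke a Jackson-type argument using essential $5$-edge-connectivity and a counting/discharging estimate to construct an Eulerian subgraph $T$ that dominates every edge; if $T$ fails to dominate some edge $e = uv$, one analyses a smallest component of $H - V(T)$ and uses the $5$-edge cut it creates to either enlarge $T$ or extract a new reducible configuration, contradicting minimality.

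The main obstacle, and where I expect the bulk of the technical work to go, is the local reduction step: the hypotheses must be preserved under contraction, and the list of irreducible local configurations compatible with essential $5$-edge-connectivity and $d(u)+d(v)\geq 8$ is delicate --- a similar analysis for $7$-edge-connected graphs (Zhan/Jackson, Theorem~\ref{t:zhan}) gets away with only degree-$3$ vertices having very rigid attachment, whereas dropping to essential edge-connectivity $5$ admits many more configurations that must each be closed off either by a contraction argument or by exhibiting a small essential cut. Handling degree-$3$ vertices whose three neighbours have degree exactly $5$ is the hardest sub-case and is where the strict inequality $\delta(G)\geq 6$ (rather than $\geq 5$) is used.
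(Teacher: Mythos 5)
Your opening translation is exactly the paper's: pass to the preimage $G$ with $L(G)$ hamiltonian iff $G$ has a dominating connected eulerian subgraph, $5$-connectedness becoming essential $5$-edge-connectivity, and minimum degree $6$ becoming edge weight at least $6$ (i.e.\ $d(u)+d(v)\geq 8$ on every edge). But from that point on your plan has a genuine gap: the entire construction of the dominating closed trail is delegated to ``a Jackson-type argument using essential $5$-edge-connectivity and a counting/discharging estimate,'' and no such argument is known or sketched. Catlin's collapsible-subgraph reduction together with local contractions of low-degree vertices is precisely the established technology behind Theorem~\ref{t:zhan} and the essentially $11$-connected results of Lai et al., and it stalls well above essential edge-connectivity $5$: after reducing to a Catlin-reduced, essentially $5$-edge-connected graph satisfying $d(u)+d(v)\geq 8$, there is no known counting or discharging scheme that produces a spanning or dominating eulerian subgraph, and the sub-case you yourself flag as hardest (degree-$3$ vertices all of whose neighbours have degree $5$) is exactly where such schemes break down. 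In effect your proposal reduces the theorem to a black box that is the theorem. (There is also a secondary soft spot: contracting a collapsible subgraph preserves the existence of a \emph{spanning} closed trail, but transferring a \emph{dominating} closed trail back and forth, while keeping both essential $5$-edge-connectivity and the edge-weight condition, needs a careful argument about the ``core'' that you do not supply.)

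For comparison, the paper does not stay inside Catlin-type reduction theory at all. It keeps only the mild local reductions (minimum degree $\geq 3$, no degree-$3$ vertex on parallel edges), then replaces each degree-$3$ vertex by a $3$-hyperedge on its neighbours, obtaining a $3$-hypergraph $H$ on the vertices of degree at least $4$ which is $4$-edge-connected and has no $3$-hyperedge inside a $4$-edge-cut (both facts use essential $5$-edge-connectivity and edge weight $\geq 6$). The new combinatorial core is Theorem~\ref{t:spanning-tree}: such a hypergraph contains a quasitree with tight complement, proved via the Skeletal Lemma and an optimization over partition sequences. This quasitree is then made even (Lemma~\ref{l:ext}) and completed, using $X$-joins inside the components of its complement (Lemmas~\ref{l:join} and \ref{l:eulerian}), to a connected eulerian subgraph of $G$ spanning all vertices of degree at least $4$; domination then follows because the weight condition forbids two adjacent low-degree vertices. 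If you want to salvage your route, you would have to supply, in place of the ``Jackson-type'' step, an argument of comparable strength to this quasitree machinery --- that is where the actual content of the theorem lies.
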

This provides a partial result towards
Conjecture~\ref{conj:thomassen}. Furthermore, the theorem can be
strengthened in two directions: it extends to claw-free graphs by a
standard application of the results of~\cite{bib:Ryj-closure}, and it
remains valid if `hamiltonian' is replaced by `Hamilton-connected'.

One of the ingredients of our method is an idea used (in a simpler
form) in~\cite{bib:Kai-short} to give a short proof of the
characterization of graphs with $k$ disjoint spanning trees due to
Tutte~\cite{bib:Tut-problem} and Nash-Williams~\cite{bib:NW-disjoint}
(the `tree-packing theorem'). It may be helpful to
consult~\cite{bib:Kai-short} as a companion to
Section~\ref{sec:partition-sequences} of the present paper.

The paper is organised as follows. In Section~\ref{sec:prelim}, we
recall the necessary preliminary definitions concerning graphs and
hypergraphs. Section~\ref{sec:quasi} introduces several notions
related to quasigraphs, a central concept of this paper. Here, we also
state our main result on quasitrees with tight complement
(Theorem~\ref{t:spanning-tree}). Sections~\ref{sec:narrow}--\ref{sec:skeletal}
elaborate the theory needed for the proof of this theorem, which is
finally given in Section~\ref{sec:proof-main}. Sections~\ref{sec:even}
and \ref{sec:claw-free} explain why quasitrees with tight complement
are important for us, by exhibiting their relation to connected
eulerian subgraphs of a graph. This relation is used in
Section~\ref{sec:claw-free} to prove the main result of this paper,
which is Theorem~\ref{t:main} and its corollary for claw-free
graphs. In section~\ref{sec:ham-conn}, we outline a way to further
strengthen this result by showing that graphs satisfying the
assumptions of Theorem~\ref{t:main} are in fact
Hamilton-connected. Closing remarks are given in
Section~\ref{sec:conclusion}.

The end of each proof is marked by $\square$. In proofs consisting of
several claims, the end of the proof of each claim is marked by
$\triangle$.

%%%%%%%%%%%%%%%%%%%%%%%%%%%%%%%%%%%%%%%%%%%%%%%%%%%%%%%%%%%%%%%%%%%%%%

\section{Preliminaries}
\label{sec:prelim}

All the graphs considered in this paper are finite and may contain
parallel edges but no loops. The vertex set and the edge (multi)set of
a graph $G$ is denoted by $V(G)$ and $E(G)$, respectively. For
background on graph theory and any terminology which is not explicitly
introduced, we refer the reader to~\cite{bib:Die-graph}.

A \emph{hypergraph} $H$ consists of a vertex set $V(H)$ and a
(multi)set $E(H)$ of subsets of $V(H)$ that are called the
\emph{hyperedges} of $H$. We will be dealing exclusively with
\emph{3-hypergraphs}, that is, hypergraphs each of whose hyperedges
has cardinality 2 or 3. Multiple copies of the same hyperedge are
allowed. Throughout this paper, any hypergraph is assumed to be a
3-hypergraph unless stated otherwise. Furthermore, the symbol $H$ will
always refer to a 3-hypergraph with vertex set $V$. For
$k\in\Setx{2,3}$, a \emph{$k$-hyperedge} is a hyperedge of cardinality
$k$.

To picture a 3-hypergraph, we will represent a vertex by a solid dot,
a 2-hyperedge by a line as usual for graphs, and a 3-hyperedge $e$ by
three lines joining each vertex of $e$ to a point which is not a solid
dot (see Figure~\ref{fig:hypergraph}).

\begin{figure}
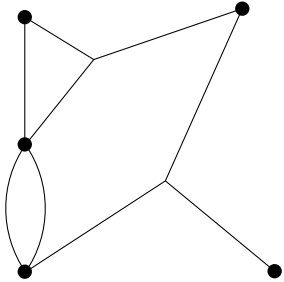

  \begin{center}
    \fig2
    \caption{A 3-hypergraph $H$ with three 2-hyperedges and two
      3-hyperedges.}
  \label{fig:hypergraph}
  \end{center}
\end{figure}

In our argument, 3-hypergraphs are naturally obtained from graphs by
replacing each vertex of degree 3 by a hyperedge consisting of its
neighbours. Conversely, we may turn a 3-hypergraph $H$ into a graph
$\gr H$: for each 3-hyperedge $e$ of $H$, we add a vertex $v_e$ and
replace $e$ by three edges joining $v_e$ to each vertex of $e$.

As in the case of graphs, the hypergraph $H$ is \emph{connected} if
for every nonempty proper subset $X\sub V$, there is a hyperedge of
$H$ intersecting both $X$ and $V-X$. If $H$ is connected, then an
\emph{edge-cut} in $H$ is any inclusionwise minimal set of hyperedges
$F$ such that $H-F$ is disconnected. For any integer $k$, the
hypergraph $H$ is \emph{$k$-edge-connected} if it is connected and
contains no edge-cuts of cardinality less than $k$. The \emph{degree}
of a vertex $v$ is the number of hyperedges incident with $v$.

To extend the notion of induced subgraph to hypergraphs, we adopt the
following definition. For $X \sub V$, we define $H[X]$ (the
\emph{induced subhypergraph of $H$ on $X$}) as the hypergraph with vertex set
$X$ and hyperedge set
\begin{equation*}
  E(H[X]) = \Set{e\cap X}{e\in E(H) \text{ and } \size{e\cap X} \geq 2}.
\end{equation*}
If $e\cap X = f\cap X$ for distinct hyperedges $e,f$, we include this
hyperedge in multiple copies. Furthermore, we assume a canonical
assignment of hyperedges of $H$ to hyperedges of $H[X]$. To stress
this fact, we always write the hyperedges of $H[X]$ as $e\cap X$,
where $e\in E(H)$.

Let $\PP$ be a partition of a set $X$. $\PP$ is \emph{trivial} if $\PP
= \Setx{X}$.  A set $Y \sub X$ is \emph{$\PP$-crossing} (or: \emph{$Y$
  crosses $\PP$}) if it intersects at least two classes of $\PP$.

As usual, another partition $\RR$ of $X$ \emph{refines} $\PP$ (written
as $\RR\leq\PP$) if every class of $\RR$ is contained in a class of
$\PP$. In this case we also say that $\RR$ is \emph{finer} than $\PP$
or that $\PP$ is \emph{coarser}. If $\RR\leq\PP$ and $\RR\neq\PP$,
then we write $\RR<\PP$ and say that $\RR$ is \emph{strictly finer}
(and $\PP$ is \emph{strictly coarser}). It is well known that the
order $\leq$ on partitions of $X$ is a lattice; the infimum of any two
partitions $\PP,\RR$ (i.e., the unique coarsest partition
that refines both $\PP$ and $\RR$) is denoted by $\PP\wedge\RR$.

If $Y\sub X$, then the partition \emph{induced on $Y$ by $\PP$}
is
\begin{equation*}
  \PP[Y] = \Set{P\cap Y}{P\in\PP \text{ and } P\cap Y\neq\emptyset}.
\end{equation*}

%%%%%%%%%%%%%%%%%%%%%%%%%%%%%%%%%%%%%%%%%%%%%%%%%%%%%%%%%%%%%%%%%%%%%%

\section{Quasigraphs}
\label{sec:quasi}

A basic notion in this paper is that of a quasigraph. It is a
generalization of tree representations and forest representations
used, e.g., in~\cite{bib:FKK-decomposing}. 

Recall from Section~\ref{sec:prelim} that $H$ is a 3-hypergraph on
vertex set $V$. A \emph{quasigraph} in $H$ is a pair $(H,\pi)$, where
$\pi$ is a function assigning to each hyperedge $e$ of $H$ a set
$\pi(e) \sub e$ which is either empty or has cardinality 2. The value
$\pi(e)$ is called the \emph{representation} of $e$ under
$\pi$. Usually, the underlying hypergraph is clear from the context,
and we simply speak about a quasigraph $\pi$. Quasigraphs will be
denoted by lowercase Greek letters.

In this section, $\pi$ will be a quasigraph in $H$. Considering all
the nonempty sets $\pi(e)$ as graph edges, we obtain a graph $\pi^*$
on $V$. The hyperedges $e$ with $\pi(e)\neq\emptyset$ are said to be
\emph{used} by $\pi$. The set of all such hyperedges of $H$ is denoted
by $E(\pi)$. The edges of the graph $\pi^*$, in contrast, are denoted
by $E(\pi^*)$ as expected. We emphasize that, by definition, $\pi^*$
spans all the vertices in $V$.

To picture $\pi$, we use a bold line to connect the vertices of
$\pi(e)$ for each hyperedge $e$ used by $\pi$. An example of a
quasigraph is shown in Figure~\ref{fig:quasi}.

\begin{figure}
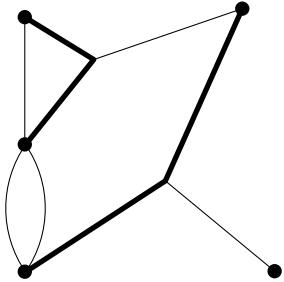

  \centering
  \fig3
  \caption{A quasigraph $\rho$ in the hypergraph of
    Figure~\ref{fig:hypergraph}.}
  \label{fig:quasi}
\end{figure}

The quasigraph $\pi$ is a \emph{acyclic} (or a \emph{quasiforest}) if
$\pi^*$ is a forest; $\pi$ is a \emph{quasitree} if $\pi^*$ is a
tree. Furthermore, we define $\pi$ to be a \emph{quasicycle} if
$\pi^*$ is the union of a cycle and a (possibly empty) set of isolated
vertices.

If $e$ is a hyperedge of $H$, then $\pi-e$ is the quasigraph obtained
from $\pi$ by changing the value at $e$ to $\emptyset$. The
\emph{complement} $\barr\pi$ of $\pi$ is the spanning subhypergraph of
$H$ comprised of all the hyperedges of $H$ not used by $\pi$. Since
$\pi$ includes the information about its underlying hypergraph $H$, it
makes sense to speak about its complement without specifying $H$
(although we sometimes do specify it for emphasis). Note that
$\barr\pi$ is not a quasigraph.

How to define an analogue of the induced subgraph for quasigraphs? Let
$X\sub V$. At first sight, a natural choice for the underlying
hypergraph of a quasigraph induced by $\pi$ on $X$ is $H[X]$. It is
clear how to define the value of the quasigraph on a hyperedge $e\cap
X$, except if $\size e = 3$ and $\size{e\cap X} = 2$ (see
Figure~\ref{fig:sect}(a)). In particular, if $\pi(e)$ intersects both
$X$ and $V-X$, then $e\cap X$ will not be used by the induced
quasigraph; furthermore, it is (at least for our purposes) not
desirable to include $e\cap X$ in the complement of the induced
quasigraph either. This brings us to the following replacement for
$H[X]$ (cf. Figure~\ref{fig:sect}(b)).

\begin{figure}
  \centering
  \sfig{26}{Possible types of 3-hyperedges $e$ with $\size{e\cap X} =
    2$ with respect to the quasigraph $\pi$.}
  \hfilll
  \sfig{27}{The corresponding 2-hyperedges of the induced
    quasigraph. Note that $e$ does not have a corresponding hyperedge.}
  \caption{An illustration to the definition of the $\pi$-section at $X$.}
  \label{fig:sect}
\end{figure}

The \emph{$\pi$-section} of $H$ at $X$ is the hypergraph $\inner H\pi
X$ defined as follows:
\begin{itemize}
\item $\inner H \pi X$ has vertex set $X$,
\item its hyperedges are the sets $e\cap X$, where $e$ is a hyperedge
  of $H$ such that $\size{e\cap X}\geq 2$ and $\pi(e) \sub X$.
\end{itemize}
The quasigraph $\pi$ in $H$ naturally determines a quasigraph $\pi[X]$
in $\inner H\pi X$, defined by
\begin{equation*}
  (\pi[X])(e\cap X) = \pi(e),
\end{equation*}
where $e\in E(H)$ and $e\cap X$ is any hyperedge of $\inner H \pi X$.
We refer to $\pi[X]$ as the quasigraph \emph{induced} by $\pi$ on
$X$. Let us stress that whenever we speak about the complement of
$\pi[X]$, it is --- in accordance with the definition --- its
complement in $\inner H \pi X$.

The ideal quasigraphs for our purposes in the later sections of this
paper would be quasitrees with connected complement. It turns out,
however, that this requirement is too strong, and that the following
weaker property will suffice. The quasigraph $\pi$ \emph{has tight
  complement} (in $H$) if one of the following holds:
\begin{enumerate}[\quad(a)]
\item $\barr\pi$ is connected, or
\item there is a partition $V = X_1\cup X_2$ such that for $i=1,2$,
  $X_i$ is nonempty and $\pi[X_i]$ has tight complement (in $\inner H
  \pi {X_i}$); furthermore, there is a hyperedge $e\in E(\pi)$ such
  that $\pi(e)\sub X_1$ and $e\cap X_2\neq\emptyset$.
\end{enumerate}
The definition is illustrated in Figure~\ref{fig:tight}.

\begin{figure}
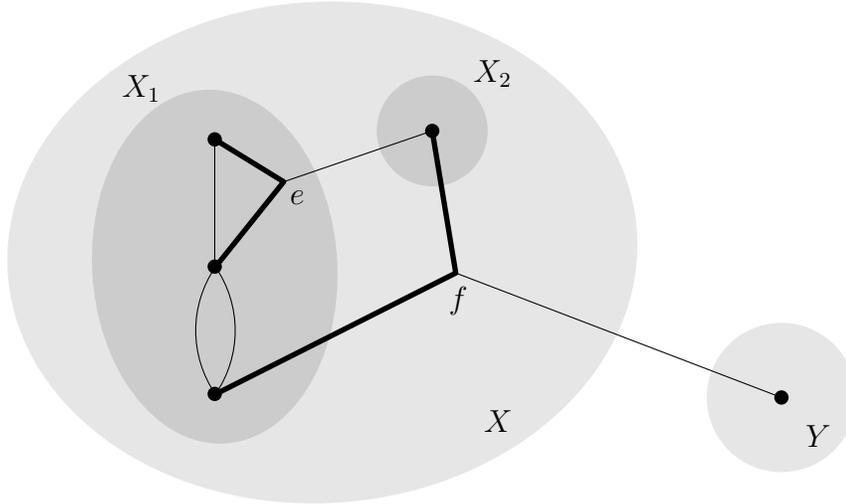

  \begin{center}
    \fig{1}
    \caption{The quasigraph $\rho$ of Figure~\ref{fig:quasi} has
      tight complement in $H$. The ovals show the subsets of $V$
      relevant to the definition of tight complement. For $i=1,2$,
      $\rho[X_i]$ has connected complement in $\inner
      H\rho {X_i}$, so $\rho[X]$ has tight complement in
      $\inner H\rho X$ `thanks to' the hyperedge $e$. Similarly,
      $f$ makes the complement of $\rho$ in $H$ tight.}
    \label{fig:tight}
  \end{center}
\end{figure}

Our main result regarding quasitrees in hypergraphs is the following:
\begin{theorem}\label{t:spanning-tree}
  Let $H$ be a 4-edge-connected 3-hypergraph. If no 3-hyperedge in $H$
  is included in any edge-cut of size 4, then $H$ contains a quasitree with
  tight complement.
\end{theorem}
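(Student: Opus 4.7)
The plan is to follow the approach of~\cite{bib:Kai-short} for the Tutte/Nash-Williams tree-packing theorem, adapted to the hypergraph setting with the twist that a $3$-hyperedge can be ``used'' in two different ways and each such use may orphan a vertex from the point of view of the complement. Morally, Theorem~\ref{t:spanning-tree} is a single-spanning-structure analogue of Nash-Williams/Tutte: $4$-edge-connectivity of a graph yields two edge-disjoint spanning trees, so here it should yield one quasitree whose complement retains enough connectivity to be tight.

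I would proceed by induction on $\size{V(H)}$. First I would attempt to build a quasitree $\pi$ whose complement is actually connected --- this is case~(a) of the definition of tight complement. The construction would proceed along a refining sequence of partitions of $V$, starting from the trivial partition $\Setx{V}$: at each step one locates a hyperedge crossing the current partition and extends $\pi$ by picking a $2$-element representation $\pi(e)\sub e$ that merges two classes. The $4$-edge-connectivity guarantees that many hyperedges cross every nontrivial cut, so we can afford to spend one on growing $\pi$ while leaving three ``reserve'' hyperedges of $\barr\pi$ across the cut --- exactly what is needed for $\barr\pi$ to remain connected.

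The subtle part is that for a $3$-hyperedge $e$, the representation $\pi(e)$ misses one vertex of $e$, and that vertex may fail to be covered by $\barr\pi$ across some cut that separates it from the other two. The hypothesis that no $3$-hyperedge lies in a $4$-edge-cut is exactly the slack that rules this out: along any $4$-cut of $H$, only $2$-hyperedges can appear, so the third vertex of a ``chosen'' $3$-hyperedge is never isolated by a tight cut. If the greedy procedure nevertheless stalls --- some refinement step cannot be executed while preserving the connectivity of $\barr\pi$ --- I would interpret the obstruction as producing a partition $V=X_1\cup X_2$ together with a distinguished hyperedge $e\in E(\pi)$ for which $\pi(e)\sub X_1$ and $e\cap X_2\neq\emptyset$. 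This is precisely the split demanded in case~(b). One then applies the induction hypothesis to $\inner H\pi{X_1}$ and $\inner H\pi{X_2}$, and the bridging hyperedge $e$ glues the two tight-complement certificates into a tight-complement certificate for $\pi$ itself.

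The principal obstacle I expect is verifying that the two hypotheses of the theorem --- $4$-edge-connectivity and the absence of a $3$-hyperedge inside any $4$-edge-cut --- really do pass to the sections $\inner H\pi{X_i}$. The section operation discards every $3$-hyperedge $e$ with $\pi(e)\not\sub X_i$, so a side of a $4$-cut may lose too many hyperedges to remain $4$-edge-connected, and newly created $2$-hyperedges $e\cap X_i$ may conspire to place some surviving $3$-hyperedge into a small cut of $\inner H\pi{X_i}$. Controlling this is presumably why Sections~\ref{sec:narrow}--\ref{sec:skeletal} are needed: to develop the partition-sequence machinery and to isolate combinatorial ``narrow'' or ``skeletal'' configurations in which both hypotheses are guaranteed to survive the split.
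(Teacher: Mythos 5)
Your plan stalls exactly at the point you yourself flag, and that gap is not a technicality that the paper's machinery quietly fixes for you --- it is the reason the paper takes a structurally different route. In your scheme the induction hypothesis is applied to the sections $\inner H\pi{X_1}$ and $\inner H\pi{X_2}$, so you would need these sections to again be 4-edge-connected with no 3-hyperedge in a 4-edge-cut; as you observe, neither property survives sectioning, and the paper never attempts to restore them. Instead, the actual proof avoids any induction on the hypotheses of the theorem: it develops the orders $\unlhd$ and $\preceq$ on quasigraphs (Sections 4--6), takes a $\preceq$-maximal \emph{acyclic} quasigraph $\pi$ (the Skeletal Lemma needs no edge-connectivity at all), and obtains a $\pi$-skeletal partition $\PP$, i.e.\ every class is $\pi$-solid and $\barr{\pi/\PP}$ is acyclic. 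The two hypotheses enter only once, at the very end, and only through edge-cuts of $H$ induced by the classes of $\PP$, where they hold automatically --- no inheritance to sections is ever needed. Concretely, acyclicity of $\pi/\PP$ and of $\barr{\pi/\PP}$ gives $m+\barr{m_3}\leq 2n-3$ for the contracted hypergraph $H/\PP$; 4-edge-connectivity gives the degree bound $4n_4+5n_{5^+}\leq 2m+m_3+\barr{m_3}$, forcing $m_3\geq \barr{m_3}+n_{5^+}+6$; and the no-3-hyperedge-in-a-4-cut hypothesis is used in an orientation argument (root the forest $(\pi/\PP)^*$, assign to each used 3-hyperedge the head of its arc, which must have degree at least 5) to prove $m_3\leq n_{5^+}$, a contradiction unless $\PP$ is trivial, i.e.\ unless $\pi$ is already a quasitree with tight complement.

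Your greedy step is also unsupported as stated: leaving three ``reserve'' hyperedges of $\barr\pi$ across the one cut currently being merged does not keep $\barr\pi$ connected, since connectivity of the complement is a condition on \emph{every} cut, and earlier choices of $\pi$ can destroy it elsewhere. Indeed the paper explicitly notes that demanding a connected complement is too strong --- that is why ``tight complement'' is defined recursively --- and its proof never produces a connected complement, only a tight one, and only by contradiction via the counting above rather than by any constructive growth procedure. So the proposal identifies the right difficulty but does not supply the idea that resolves it: replace induction on the theorem's hypotheses by an extremal choice of an acyclic quasigraph with respect to $\preceq$, reduce to a skeletal partition, and finish with a global counting/orientation argument in $H/\PP$.
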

Theorem~\ref{t:spanning-tree} will be proved in
Section~\ref{sec:proof-main}.

An equivalent definition of quasigraphs with tight complement is based
on the following concept. Let us say that a partition $\PP$ of $V$ is
\emph{$\pi$-narrow} if for every $\PP$-crossing hyperedge $e$ of $H$,
$\pi(e)$ is also $\PP$-crossing. (We call $\PP$ `narrow' since none of
these sets $\pi(e)$ fits into a class of $\PP$.) For instance, the
partition shown in Figure~\ref{fig:parts}(b) below is
$\pi$-narrow. Observe that the trivial partition is $\pi$-narrow for
any $\pi$.

\begin{lemma}
  \label{l:tight-narrow}
  A quasigraph $\pi$ in $H$ has tight complement if and only if there
  is no nontrivial $\pi$-narrow partition of $V$.
\end{lemma}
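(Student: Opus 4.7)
My plan is to prove the two directions separately. The forward direction proceeds by induction on the recursive structure of tight complement. In case (a), where $\barr\pi$ is connected, any nontrivial partition $\PP$ of $V$ has some hyperedge of $\barr\pi$ crossing it; since that hyperedge has empty $\pi$-representation, $\PP$ is not $\pi$-narrow. In case (b), with $V=X_1\cup X_2$ and linking hyperedge $e$, a nontrivial $\pi$-narrow $\PP$ would induce a $\pi[X_i]$-narrow partition $\PP[X_i]$ on each $X_i$: any $\PP[X_i]$-crossing hyperedge $f\cap X_i$ of $\inner H\pi{X_i}$ arises from an $f\in E(H)$ crossing $\PP$, and $\pi$-narrowness of $\PP$ forces $\pi(f)\sub X_i$ to cross $\PP$, hence $\PP[X_i]$. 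By the inductive hypothesis each $\PP[X_i]$ is trivial, so each $X_i$ is contained in one class of $\PP$, making $\PP$ either trivial or equal to $\{X_1,X_2\}$; the latter is ruled out by $e$, which crosses $\{X_1,X_2\}$ while $\pi(e)\sub X_1$ does not.

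For the reverse direction I would construct the tight complement decomposition directly. Let $\QQ=\{Q_1,\dots,Q_k\}$ be the partition of $V$ into components of $\barr\pi$; if $k=1$, case (a) applies, so assume $k\geq 2$. Starting from $\PP_0=\QQ$, coarsen iteratively by merging two current classes $C,C'$ whenever there exists $f\in E(\pi)$ with $\pi(f)\sub C$ and $f\cap C'\neq\emptyset$, and let $\PP_m$ be the terminal partition. Then $\PP_m$ is $\pi$-narrow: every hyperedge of $\barr\pi$ sits inside a single component of $\barr\pi$ and therefore in a single class of $\PP_m$ (which is coarser than $\QQ$); and for every $f\in E(\pi)$ whose representation lies in a class $C$ of $\PP_m$, termination of the process forces $f\sub C$. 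By hypothesis, therefore, $\PP_m=\{V\}$, so the merge sequence consists of exactly $k-1$ binary merges.

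Reversing the merge sequence yields a binary tree with root $V$ and leaves $Q_1,\dots,Q_k$, each internal node carrying its witnessing hyperedge. At each leaf $Q_l$ the complement $\barr{\pi[Q_l]}$ (in $\inner H\pi{Q_l}$) equals $\barr\pi[Q_l]$, which is connected by the definition of a component, so case (a) of the definition applies. At each internal node $C$ split into $C_1\cup C_2$ with witness $f$ satisfying $\pi(f)\sub C_1$ and $f\cap C_2\neq\emptyset$, case (b) applies --- provided the same argument carries through recursively for $\pi[C_i]$ inside $\inner H\pi{C_i}$. The main technical point to verify is that the merges used in the subtree rooted at $C$ correspond to valid hyperedges of $\inner H\pi C$: each class reached during the process is a union of components of $\barr\pi$, so any witnessing hyperedge $f$ used within $C$ has $\pi(f)\sub C$ and also $f\sub C$ (its endpoints at the moment of the merge lying inside $C_1\cup C_2\sub C$), whence $f$ appears as a hyperedge of $\inner H\pi C$ with $\pi[C](f)=\pi(f)$. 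A straightforward induction on subtree size then closes the argument.
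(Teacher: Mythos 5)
Your proof is correct. The ``only if'' direction is essentially the paper's own argument: induction over the recursive definition, case (a) handled by a crossing hyperedge of $\barr\pi$, and in case (b) the induced partitions $\PP[X_i]$ shown $\pi[X_i]$-narrow, hence trivial, leaving only $\PP=\Setx{X_1,X_2}$, which the linking hyperedge kills. The ``if'' direction, however, takes a genuinely different, constructive route. The paper argues by contradiction: it takes a coarsest partition $\RR$ all of whose classes $X$ have $\pi[X]$ with tight complement in $\inner H\pi X$ (singletons witness existence), notes that a nontrivial $\RR$ cannot be $\pi$-narrow, and merges two classes across the resulting hyperedge to contradict maximality. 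You instead start from the components of $\barr\pi$, merge greedily along hyperedges of $E(\pi)$ whose representation is confined to one class, prove the terminal partition is $\pi$-narrow and hence trivial, and then unwind the $k-1$ merges into an explicit binary decomposition tree certifying the recursive definition of tight complement at every node. Both arguments run on the same engine (a terminal or maximal partition into tight-complement pieces must be $\pi$-narrow), but yours buys two things: all witnesses are automatically in $E(\pi)$, since starting from the components of $\barr\pi$ means complement hyperedges never cross any partition in the process, so you never meet the degenerate situation of a crossing hyperedge with $\pi(e)=\emptyset$, which the paper's one-step appeal to the definition has to absorb; and you verify case (b) explicitly at each internal node (including the observation that a witness $f$ satisfies $f\sub C$ and so survives as a used hyperedge of $\inner H\pi C$ with the same representation). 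The paper's version is shorter. The only step you leave implicit is the routine transitivity identity $\inner{(\inner H\pi C)}{\pi[C]}{C_i}=\inner H\pi{C_i}$ together with $(\pi[C])[C_i]=\pi[C_i]$, needed so that tight complement of $\pi[C_i]$ in $\inner H\pi{C_i}$ is exactly what case (b) demands inside $\inner H\pi C$; it follows by unwinding the definitions (both sides amount to $\size{e\cap C_i}\geq 2$ and $\pi(e)\sub C_i$), in the spirit of the formal equalities invoked in Lemma~\ref{l:cycle-di}, so this is a cosmetic rather than substantive gap.
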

\begin{proof}
  We prove the `only if' part by induction on the number of vertices
  of $H$. If $\size V = 1$, the assertion is trivial. Assume that
  $\size V > 1$ and that $\PP$ is a nontrivial partition of $V$; we
  aim to prove that $\PP$ is not $\pi$-narrow. Consider the two cases
  in the definition of tight complement. If $\barr\pi$ is connected
  (Case (a)), then there is a $\PP$-crossing hyperedge $e$ of
  $\barr\pi$. Since $\pi(e) = \emptyset$ is not $\PP$-crossing, $\PP$
  is not $\pi$-narrow.

  In Case (b), there is a partition $V = X_1\cup X_2$ into nonempty
  sets such that each $\pi[X_i]$ has tight complement in $\inner H \pi
  {X_i}$. Suppose that $\PP[X_1]$ is nontrivial. By the induction
  hypothesis, it is not $\pi[X_1]$-narrow. Consequently, there is a
  hyperedge $f$ of $\inner H \pi{X_1}$ contained in $\pi[X_1]$ and such
  that $\pi(f)\sub P\cap X_1$, where $P\in\PP$. It follows that
  $\PP$ is not $\pi$-narrow as claimed. 

  By symmetry, we may assume that both $\PP[X_1]$ and $\PP[X_2]$ are
  trivial. Since $\PP$ is nontrivial, it must be that $\PP =
  \Setx{X_1,X_2}$. Case (b) of the definition of tight complement
  ensures that there is a hyperedge $e\in E(\pi)$ such that
  $\pi(e)\sub X_1$ and $e\cap X_2\neq\emptyset$. Since $e$ is
  $\PP$-crossing and $\pi(e)$ is not, $\PP$ is not $\pi$-narrow. This
  finishes the proof of the `only if' part.

  The `if' direction will be proved by contradiction. Suppose that $V$
  admits no nontrivial $\pi$-narrow partition, but $\pi$ does not have
  tight complement in $H$. Let $\RR$ be a coarsest possible partition
  of $V$ such that each $\pi[X]$, where $X\in\RR$, has tight
  complement in $\inner H\pi X$. (To see that at least one partition
  with this property exists, consider the partition of $V$ into
  singletons.)  Since $\RR$ is nontrivial by assumption, there is an
  $\RR$-crossing hyperedge $e$ of $H$ with $\pi(e) \sub R_1$, where
  $R_1$ is some class of $\RR$. Since $e$ is $\RR$-crossing, it
  intersects another class $R_2$ of $\RR$. By the definition,
  $\pi[R_1\cup R_2]$ has tight complement in $\inner H\pi{R_1\cup
    R_2}$, which contradicts the maximality of $\RR$.
\end{proof}

%%%%%%%%%%%%%%%%%%%%%%%%%%%%%%%%%%%%%%%%%%%%%%%%%%%%%%%%%%%%%%%%%%%%%%

\section{Narrow and wide partitions}
\label{sec:narrow}

We begin this section by modifying the definition of a $\pi$-narrow
partition of $V$. If $\pi$ is a quasigraph in $H$, then a partition
$\PP$ of $V$ is \emph{$\pi$-wide} if for every hyperedge $e$ of $H$,
$\pi(e)$ is a subset of a class of $\PP$. (In particular, $\pi(e)$ is
not $\PP$-crossing for any $\PP$-crossing hyperedge $e$.) An example
of a $\pi$-wide partition is shown in Figure~\ref{fig:parts}(a) below.
Again, the trivial partition is $\pi$-wide for any $\pi$.
Lemma~\ref{l:tight-narrow} has the following easier analogue:

\begin{lemma}\label{l:tree-wide}
  If $\pi$ is a quasigraph in $H$, then $\pi^*$ is connected if and
  only if there is no nontrivial $\pi$-wide partition of $V$.
\end{lemma}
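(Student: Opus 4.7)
The plan is to prove each direction directly, since the statement reduces essentially to the definition of a connected component of $\pi^*$. The key observation is that the edges of $\pi^*$ are precisely the nonempty sets $\pi(e)$, so a partition is $\pi$-wide exactly when every edge of $\pi^*$ has both endpoints in a single class. Since $\pi^*$ spans $V$, this links $\pi$-wide partitions to the component structure of $\pi^*$.

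For the forward direction (`only if'), I would suppose that $\PP$ is a $\pi$-wide partition of $V$ and show that it must be trivial when $\pi^*$ is connected. For any edge $uv$ of $\pi^*$, there is a hyperedge $e$ of $H$ with $\pi(e) = \Setx{u,v}$, and $\pi$-wideness forces $u$ and $v$ to lie in a common class of $\PP$. Hence each connected component of $\pi^*$ is contained in a single class of $\PP$; since $\pi^*$ is connected and spans $V$, all of $V$ lies in one class, so $\PP = \Setx{V}$ is trivial.

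For the reverse direction (`if'), I would prove the contrapositive: if $\pi^*$ is disconnected, exhibit a nontrivial $\pi$-wide partition. Take $\PP$ to be the partition of $V$ into the vertex sets of the connected components of $\pi^*$ (this is well defined because $\pi^*$ spans $V$, so even isolated vertices are accounted for as singleton components). Since $\pi^*$ is disconnected, $\PP$ has at least two classes and is therefore nontrivial. For any hyperedge $e$, $\pi(e)$ is either empty, in which case it is trivially contained in any class, or it is an edge of $\pi^*$ and hence has both endpoints in the same component of $\pi^*$. In either case $\pi(e)$ is a subset of a class of $\PP$, so $\PP$ is $\pi$-wide.

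There is no real obstacle here; the proof is essentially a bookkeeping of definitions, in contrast with Lemma~\ref{l:tight-narrow} where the recursive nature of tight complement forced an induction. The only subtlety to state carefully is that $\pi^*$ spans $V$, so that isolated vertices appear as their own components and the component partition really is a partition of $V$.
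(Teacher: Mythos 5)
Your proof is correct and follows essentially the same route as the paper: the forward direction uses the fact that a connected spanning graph must have an edge crossing any nontrivial partition (you phrase it contrapositively, but the content is identical), and the reverse direction takes the partition into vertex sets of components of $\pi^*$ exactly as the paper does. Your care about isolated vertices and empty values $\pi(e)$ matches the paper's handling of hyperedges not in $E(\pi)$.
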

\begin{proof}
  We begin with the `only if' direction. Suppose that $\PP$ is a
  nontrivial partition of $V$. Since $\pi^*$ is a connected graph with
  vertex set $V$, there is an edge $\pi(e)$ of $\pi^*$ crossing
  $\PP$. This shows that $\PP$ is not $\pi$-wide.

  Conversely, suppose that $\pi^*$ is disconnected, and let $\PP$ be
  the partition of $V$ whose classes are the vertex sets of components
  of $\pi^*$. Let $e$ be a hyperedge of $H$. We claim that $\pi(e)$ is
  not $\PP$-crossing. This is certainly true if $e\notin E(\pi)$. In
  the other case, $\pi(e)$ is an edge of $\pi^*$ and both of its
  endvertices must be contained in the same component of $\pi^*$,
  which proves the claim. We conclude that $\PP$ is a (nontrivial)
  $\pi$-wide partition of $V$.
\end{proof}

It is interesting that both the class of $\pi$-narrow partitions and
the class of $\pi$-wide partitions are closed with respect to meets in
the lattice of partitions:
\begin{observation}\label{obs:meet}
  If $\pi$ is a quasigraph in $H$ and $\PP$ and $\RR$ are $\pi$-narrow
  partitions, then $\PP\wedge\RR$ is $\pi$-narrow. Similarly, if $\PP$
  and $\RR$ are $\pi$-wide, then $\PP\wedge\RR$ is $\pi$-wide.
\end{observation}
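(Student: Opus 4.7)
My approach is to reduce both halves of the observation to a single set-theoretic lemma about the meet of two partitions. Since $\PP\wedge\RR$ is the coarsest common refinement of $\PP$ and $\RR$, its classes are exactly the nonempty intersections $P\cap R$ with $P\in\PP$ and $R\in\RR$. From this I extract the key equivalence: a nonempty set $Y\sub V$ is contained in a single class of $\PP\wedge\RR$ if and only if it is contained in a single class of $\PP$ and in a single class of $\RR$. Negating both sides yields the characterization I will actually use, namely, $Y$ is $(\PP\wedge\RR)$-crossing if and only if $Y$ is $\PP$-crossing or $\RR$-crossing.

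Given this characterization, the wide half is immediate. For any hyperedge $e$ of $H$, the $\pi$-wideness of $\PP$ and of $\RR$ tells us that $\pi(e)$ crosses neither $\PP$ nor $\RR$, so by the equivalence $\pi(e)$ does not cross $\PP\wedge\RR$; this is precisely the condition that $\PP\wedge\RR$ be $\pi$-wide.

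The narrow half is similarly short. Take a $(\PP\wedge\RR)$-crossing hyperedge $e$; by the equivalence, $e$ is $\PP$-crossing or $\RR$-crossing, and by symmetry I may assume the former. Then the $\pi$-narrowness of $\PP$ forces $\pi(e)$ to be $\PP$-crossing, and a second application of the equivalence makes $\pi(e)$ a $(\PP\wedge\RR)$-crossing set. Thus $\PP\wedge\RR$ is $\pi$-narrow.

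I do not foresee any real obstacle: the entire argument is the unpacking of the definition of the meet of two partitions, and once the crossing-characterization is in hand, both parts of the observation reduce to one-line deductions.
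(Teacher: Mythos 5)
Your proof is correct: the paper states this as an observation and omits the proof, and your argument --- identifying the classes of $\PP\wedge\RR$ with the nonempty intersections $P\cap R$ and deducing that a set crosses $\PP\wedge\RR$ exactly when it crosses $\PP$ or $\RR$ --- is precisely the intended routine unpacking of the definitions. No gaps; the only point worth a word is that $\pi(e)$ may be empty, but then it trivially lies in a class of the meet, so both halves go through.
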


By Observation~\ref{obs:meet}, for any quasigraph $\pi$ in $H$, there
is a unique finest $\pi$-narrow partition of $V$, which will be
denoted by $\sdnegg\pi H$. Similarly, there is a unique finest
$\pi$-wide partition of $V$, denoted by $\sdposs\pi H$. If the
hypergraph is clear from the context, we write just $\sdpos\pi$ or
$\sdneg\pi$. Lemmas~\ref{l:tight-narrow} and \ref{l:tree-wide} provide
us with a useful interpretation of $\sdpos\pi$ and $\sdneg\pi$. It is
not hard to show from the latter lemma that the classes of $\sdpos\pi$
are exactly the vertex sets of components of $\pi^*$. Similarly, by
Lemma~\ref{l:tight-narrow}, the classes of $\sdneg\pi$ are all
maximal subsets $X$ of $V$ such that $\pi[X]$ has tight complement in
$\inner H\pi X$.

We call the classes of $\sdpos\pi$ the \emph{positive} $\pi$-parts of
$H$ and the classes of $\sdneg\pi$ the \emph{negative} $\pi$-parts of
$H$. (See Figure~\ref{fig:parts} for an illustration.) The terms
`positive' and `negative' are chosen with regard to the terminology of
photography, with `positive' used for $\pi$ and `negative' for its
complement, in accordance with the above discussion.

\begin{figure}
  \centering
  \sfignarrow8{A quasigraph $\tau$ in $H$ and the positive $\tau$-parts
    of $H$ (the gray regions).}{5mm}
  \hfilll
  \sfignarrow9{The negative $\tau$-parts of
    $H$. Note that the vertex $v$ belongs to a larger negative
    $\tau$-part, although it forms a component of $\barr{\tau}$ on its
    own.}{5mm}
  \caption{Positive and negative parts.}
  \label{fig:parts}
\end{figure}

We note the following simple corollary of Lemma~\ref{l:tight-narrow}:
\begin{lemma}\label{l:sub}
  Let $\pi$ be a quasigraph in $H$. For $i=1,2$, let $X_i\sub V$ be
  such that $\pi[X_i]$ has tight complement in $\inner H \pi
  {X_i}$. Then the following holds:
  \begin{enumerate}[\quad(i)]
  \item each $X_i$ is contained in a class of $\sdneg\pi$ (as a
    subset), and
  \item if $H$ contains a hyperedge $e$ such that $e$ intersects each
    $X_i$ and $\pi(e)\sub X_1$ (we allow $e\notin E(\pi)$), then
    $X_1\cup X_2$ is contained in a class of $\sdneg\pi$.
  \end{enumerate}
\end{lemma}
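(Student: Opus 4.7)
The plan is to exploit the characterization, stated immediately before the lemma, of the classes of $\sdneg\pi$ as the inclusion-maximal subsets $X \sub V$ for which $\pi[X]$ has tight complement in $\inner H\pi X$. Part (i) is then immediate: each $X_i$ has the tight-complement property by hypothesis, so it is contained in some maximal such subset, i.e.\ in a class of $\sdneg\pi$.

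For part (ii), it suffices, by part (i) applied to $X_1\cup X_2$, to prove that $\pi[X_1\cup X_2]$ has tight complement in $\inner H\pi{X_1\cup X_2}$. Rather than verifying the recursive definition of tight complement directly by splitting along $(X_1,X_2)$ --- which would require $e\in E(\pi)$ and so is incompatible with the allowance $e\notin E(\pi)$ in the hypothesis --- I would invoke Lemma~\ref{l:tight-narrow} and argue by contradiction: assume $\PP$ is a nontrivial $\pi[X_1\cup X_2]$-narrow partition of $X_1\cup X_2$, and derive a contradiction.

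The argument has two steps. First, I would show that $\PP[X_i]$ must be trivial for each $i$. Any hyperedge $g=f\cap X_i$ of $\inner H\pi{X_i}$ (so $\pi(f)\sub X_i$ and $\size{f\cap X_i}\geq2$) lifts to the hyperedge $f\cap(X_1\cup X_2)$ of $\inner H\pi{X_1\cup X_2}$, which has the same representation $\pi(f)$; one checks that if $g$ is $\PP[X_i]$-crossing then the lift is $\PP$-crossing, hence $\pi(f)$ is $\PP$-crossing by narrowness, and therefore also $\PP[X_i]$-crossing (because $\pi(f)\sub X_i$). Thus $\PP[X_i]$ is $\pi[X_i]$-narrow, and since $\pi[X_i]$ has tight complement, Lemma~\ref{l:tight-narrow} forces $\PP[X_i]$ to be trivial, placing $X_i$ inside a single class of $\PP$. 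If $X_1\cap X_2\neq\emptyset$, the two containing classes coincide and $\PP$ is trivial, a contradiction; otherwise $\PP=\Setx{X_1,X_2}$. This is where the distinguished hyperedge $e$ enters: since $e$ meets both $X_1$ and $X_2$ and $\pi(e)\sub X_1$, the set $e\cap(X_1\cup X_2)$ is a hyperedge of $\inner H\pi{X_1\cup X_2}$ that is $\PP$-crossing, yet its representation $\pi(e)\sub X_1$ is not $\PP$-crossing --- the argument works uniformly whether $e\in E(\pi)$ or not, since $\pi(e)=\emptyset$ is vacuously non-crossing. This contradicts $\PP$-narrowness. The main bookkeeping obstacle is precisely this lifting step, ensuring hyperedges and their representations transfer cleanly between $\inner H\pi{X_i}$ and $\inner H\pi{X_1\cup X_2}$; once set up, the contradiction is immediate.
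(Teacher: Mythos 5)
Your part (ii) is essentially the paper's own proof: assume a nontrivial $\pi[X_1\cup X_2]$-narrow partition $\PP$, show each $\PP[X_i]$ is $\pi[X_i]$-narrow and hence trivial by Lemma~\ref{l:tight-narrow}, conclude $\PP=\Setx{X_1,X_2}$, and contradict narrowness using the hyperedge $e$ with $\pi(e)\sub X_1$ (which works whether or not $e\in E(\pi)$). You spell out the lifting of hyperedges from $\inner H \pi {X_i}$ to $\inner H \pi {X_1\cup X_2}$ and treat the case $X_1\cap X_2\neq\emptyset$ explicitly, both of which the paper leaves implicit; this part is fine.

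The weak point is part (i). You derive it from the remark preceding the lemma, namely that the classes of $\sdneg\pi$ are exactly the maximal subsets $X$ with $\pi[X]$ of tight complement in $\inner H\pi X$. That remark is stated in the paper without proof, and its nontrivial direction --- that every set with the tight-complement property is contained in a class, equivalently that every maximal such set is a class --- is precisely statement (i). So as written your argument for (i) is circular unless you prove the characterization independently (which can be done, e.g.\ by first showing that the union of two intersecting tight-complement sets again has tight complement, but you do not do this). The paper instead proves (i) directly in two lines: if $\PP$ is any $\pi$-narrow partition of $V$, then $\PP[X_i]$ is $\pi[X_i]$-narrow --- this is exactly the lifting computation you carry out inside part (ii) --- hence trivial by Lemma~\ref{l:tight-narrow}; applying this to $\PP=\sdneg\pi$ shows $\sdneg\pi[X_i]$ is trivial, i.e.\ $X_i$ lies in a single class. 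Replacing your appeal to the remark by this restriction argument (which you already have in hand) closes the gap.
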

\begin{proof}
  (i) Clearly, if $\PP$ is a $\pi$-narrow partition of $V$, then
  $\PP[X_1]$ is $\pi[X_1]$-narrow; it follows that $\sdneg\pi[X_1]
  \geq \sdneg{\pi[X_1]}$. By Lemma~\ref{l:tight-narrow},
  $\sdneg{\pi[X_1]}$ is trivial. Hence $\sdneg\pi[X_1]$ is also
  trivial. A symmetric argument works for $X_2$.

  (ii) It suffices to prove that $\pi[X_1\cup X_2]$ has tight
  complement in $\inner H \pi {X_1\cup X_2}$. If not, let $\PP$ be a
  nontrivial $\pi[X_1\cup X_2]$-narrow partition of $X_1\cup X_2$. By
  the assumption, each $\PP[X_i]$ has to be trivial as it is
  $\pi[X_i]$-narrow. Thus, $\PP=\Setx{X_1,X_2}$. However, since
  $\pi(e)\subseteq X_1$, this is not a $\pi[X_1\cup X_2]$-narrow
  partition --- a contradiction.
\end{proof}

We use the partitions $\sdpos\pi$ and $\sdneg\pi$ to introduce an
order on quasigraphs. If $\pi$ and $\sigma$ are quasigraphs in $H$,
then we write
\begin{equation*}
  \pi \unlhd \sigma \text{\quad if\quad} \sdpos\pi \leq \sdpos\sigma 
  \text{ and } \sdneg\pi\leq\sdneg\sigma.
\end{equation*}
Clearly, $\unlhd$ is a partial order.

For a set $X\sub V$, let us say that two quasigraphs $\pi$ and
$\sigma$ in $H$ are \emph{$X$-similar} if the following holds for
every hyperedge $e$ of $H$:
\begin{enumerate}[\quad(1)]
\item $\pi(e)\sub X$ if and only if $\sigma(e)\sub X$, and
\item if $\pi(e)\not\sub X$, then $\pi(e) = \sigma(e)$.
\end{enumerate}
Let us collect several easy observation about $X$-similar quasigraphs:

\begin{observation}\label{obs:similar}
  If $X\sub V$ and quasigraphs $\pi$ and $\sigma$ are $X$-similar,
  then the following holds:
  \begin{enumerate}[\quad(i)]
  \item $\inner H \pi X = \inner H \sigma X$,
  \item if $X\in\sdpos\pi$, then $\sdpos\sigma \leq \sdpos\pi$,
  \item if $X\in\sdneg\pi$, then $\sdneg\sigma \leq \sdneg\pi$.
  \end{enumerate}
\end{observation}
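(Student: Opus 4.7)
The three parts have a common theme: part (i) is essentially a definition-chase, while parts (ii) and (iii) both follow by showing that the relevant partition of $\pi$ remains ``good'' for $\sigma$, and then invoking the minimality of the corresponding partition of $\sigma$ guaranteed by Observation~\ref{obs:meet}. The plan is to exploit conditions (1) and (2) in the definition of $X$-similarity together with the fact that $X$ is a full class of $\sdpos\pi$ or $\sdneg\pi$.

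For (i), both $\inner H\pi X$ and $\inner H\sigma X$ have vertex set $X$, and a set $e\cap X$ with $\size{e\cap X}\geq 2$ is a hyperedge of $\inner H\pi X$ precisely when $\pi(e)\sub X$. By condition (1) of $X$-similarity, this holds if and only if $\sigma(e)\sub X$. So the two hypergraphs have identical hyperedge sets (with the same canonical assignment), which gives the equality.

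For (ii), the plan is to verify that $\sdpos\pi$ is itself a $\sigma$-wide partition, whence $\sdpos\sigma\leq\sdpos\pi$ by the definition of $\sdpos\sigma$ as the finest such partition. Let $e\in E(H)$. If $\sigma(e)\sub X$, then $\sigma(e)$ lies inside the class $X$ of $\sdpos\pi$, and we are done. Otherwise, condition (1) gives $\pi(e)\not\sub X$, and then condition (2) yields $\sigma(e)=\pi(e)$, which is contained in a class of $\sdpos\pi$ because $\sdpos\pi$ is $\pi$-wide.

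For (iii), symmetrically, I would show that $\sdneg\pi$ is $\sigma$-narrow and conclude by minimality of $\sdneg\sigma$. Let $e$ be a $\sdneg\pi$-crossing hyperedge. Since $\sdneg\pi$ is $\pi$-narrow, $\pi(e)$ is also $\sdneg\pi$-crossing, so in particular $\pi(e)$ meets two distinct classes of $\sdneg\pi$. Because $X$ is itself a class of $\sdneg\pi$, this forces $\pi(e)\not\sub X$; then condition (1) gives $\sigma(e)\not\sub X$ and condition (2) gives $\sigma(e)=\pi(e)$, which is $\sdneg\pi$-crossing as required.

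There is no real obstacle here: the only thing to watch is the handling of the case $\pi(e)=\emptyset$, which is covered since an empty set is trivially a subset of $X$ and is not $\sdneg\pi$-crossing, so it plays no role in either verification. The core of the argument is simply that hyperedges $e$ with $\pi(e)\not\sub X$ are ``frozen'' by condition~(2), while those with $\pi(e)\sub X$ are confined to the single class $X$ of the relevant partition.
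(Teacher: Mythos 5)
Your proof is correct and is exactly the intended argument: the paper states this as an easy observation without proof, and your verification that $\sdpos\pi$ is $\sigma$-wide and $\sdneg\pi$ is $\sigma$-narrow, followed by minimality of $\sdpos\sigma$ and $\sdneg\sigma$ (via Observation~\ref{obs:meet}), is the natural way to fill it in. Your handling of the case $\pi(e)=\emptyset$ and the use of $X$ being a full class of the relevant partition are precisely the points that need care, and you got them right.
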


The following lemma is an important tool which facilitates the use of
induction in our argument.
\begin{lemma}\label{l:improve}
  Let $X\sub V$ and let $\pi$ and $\sigma$ be $X$-similar quasigraphs
  in $H$. Then the following holds:
  \begin{equation*}
    \text{if\quad} \pi[X] \unlhd \sigma[X], \text{\quad then\quad} \pi
    \unlhd \sigma.
  \end{equation*}
\end{lemma}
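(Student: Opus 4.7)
The plan is to unpack $\pi\unlhd\sigma$ into the two component inequalities $\sdpos\pi\leq\sdpos\sigma$ and $\sdneg\pi\leq\sdneg\sigma$ and prove them in parallel. Since $\sdpos\pi$ and $\sdneg\pi$ are by definition the finest $\pi$-wide and $\pi$-narrow partitions of $V$, this is equivalent to showing (a) that $\sdpos\sigma$ is $\pi$-wide and (b) that $\sdneg\sigma$ is $\pi$-narrow. In each case I would set $\PP$ to be the relevant partition, fix an arbitrary hyperedge $e\in E(H)$, and split on whether $\pi(e)\sub X$. When $\pi(e)\not\sub X$, $X$-similarity gives $\pi(e)=\sigma(e)$, so the required property for $\pi(e)$ is inherited directly from the $\sigma$-wideness or $\sigma$-narrowness of $\PP$.

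The substantive case is $\pi(e)\sub X$, where $X$-similarity also yields $\sigma(e)\sub X$ and Observation~\ref{obs:similar}(i) lets us work in the common section $\inner H\pi X=\inner H\sigma X$. The bridge step I would use is a brief uniform observation: the restriction $\PP[X]$ of any $\sigma$-wide (resp.\ $\sigma$-narrow) partition $\PP$ of $V$ is $\sigma[X]$-wide (resp.\ $\sigma[X]$-narrow). For the wide direction, every hyperedge of $\inner H\sigma X$ has the form $e'\cap X$ with $\sigma(e')\sub X$, so the $\PP$-class containing $\sigma(e')$ meets $X$ in a class of $\PP[X]$ that still contains $\sigma(e')$. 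For the narrow direction, if $e'\cap X$ crosses $\PP[X]$ then $e'$ crosses $\PP$, forcing $\sigma(e')$ to cross $\PP$, and since $\sigma(e')\sub X$ it must cross $\PP[X]$ as well. Combining with the hypotheses $\sdpos{\pi[X]}\leq\sdpos{\sigma[X]}$ and $\sdneg{\pi[X]}\leq\sdneg{\sigma[X]}$, I conclude that $\PP[X]$ is in fact $\pi[X]$-wide (resp.\ $\pi[X]$-narrow).

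The final step is to transfer this information back to the hyperedge $e$. Here I would verify that $e\cap X$ is genuinely a hyperedge of $\inner H\pi X$, i.e.\ that $\size{e\cap X}\geq 2$: in the wide case we may assume $\pi(e)\neq\emptyset$ (the empty set lies trivially in every class), so $\size{\pi(e)}=2$ already forces $\size{e\cap X}\geq 2$; in the narrow case the $\PP$-crossing assumption on $e$ together with $\sigma$-narrowness of $\PP$ forces $\sigma(e)\neq\emptyset$, again giving $\size{e\cap X}\geq 2$, and moreover $\sigma(e)\sub X$ already crosses $\PP$, which certifies $e\cap X$ as a $\PP[X]$-crossing hyperedge of the section. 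Applying the $\pi[X]$-wideness or $\pi[X]$-narrowness of $\PP[X]$ to $e\cap X$ then yields that $\pi(e)=\pi[X](e\cap X)$ lies in, or crosses, a class of $\PP[X]$ and therefore of $\PP$, which closes the argument. The main obstacle to watch for is the narrow case when $e\not\sub X$: one has to separate the roles of $e$ and $e\cap X$ carefully, noting that it is $\sigma(e)$ rather than $e$ itself that, as a $\PP$-crossing object inside $X$, simultaneously certifies $e\cap X$ as a well-defined hyperedge of the section and as a $\PP[X]$-crossing one.
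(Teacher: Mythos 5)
Your route is the paper's: you reduce $\pi\unlhd\sigma$ to showing that $\sdpos\sigma$ is $\pi$-wide and $\sdneg\sigma$ is $\pi$-narrow, restrict the partition to $X$, transfer the property to $\pi[X]$ using the hypothesis, and pull it back to $V$ via $X$-similarity. The reduction is sound, your bridge step (the restriction of a $\sigma$-wide, resp.\ $\sigma$-narrow, partition of $V$ is $\sigma[X]$-wide, resp.\ $\sigma[X]$-narrow, in $\inner H\sigma X$) is exactly the claim inside the paper's proof, and your final pullback, including the careful treatment of $e\cap X$ when $e\not\sub X$, matches the paper's last step. The wide half of your argument is complete as written, because $\tau$-wideness \emph{is} characterized by coarseness: a partition is $\tau$-wide if and only if it is coarser than $\sdpos\tau$, whose classes are the components of $\tau^*$.

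The narrow half has a genuine gap at the transfer step ``$\PP[X]$ is $\sigma[X]$-narrow, and $\sdneg{\pi[X]}\leq\sdneg{\sigma[X]}$, hence $\PP[X]$ is $\pi[X]$-narrow.'' From $\sigma[X]$-narrowness you only get $\PP[X]\geq\sdneg{\sigma[X]}\geq\sdneg{\pi[X]}$, and being coarser than the finest $\pi[X]$-narrow partition does not make a partition $\pi[X]$-narrow: unlike wideness, narrowness is not closed under coarsening. Concretely, take a section with vertex set $\Setx{a,b,c}$, one 3-hyperedge $\Setx{a,b,c}$ with $\pi$-value $\Setx{a,b}$ and $\sigma$-value $\Setx{b,c}$, and one 2-hyperedge $\Setx{a,c}$ used identically by both; then $\sdpos{\pi[X]}=\sdpos{\sigma[X]}$ and $\sdneg{\pi[X]}=\sdneg{\sigma[X]}$ (the singleton partition), yet $\Setx{\Setx{a,b},\Setx{c}}$ is $\sigma[X]$-narrow and not $\pi[X]$-narrow --- so the inference fails even when both halves of $\pi[X]\unlhd\sigma[X]$ are invoked. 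Worse, the failure is not confined to an auxiliary partition: embedding this configuration in $H$ by adding a vertex $w$ joined to $a$ and to $b$ by two unused 2-hyperedges (which the section at $X$ cannot see) gives $X$-similar $\pi,\sigma$ with $\pi[X]\unlhd\sigma[X]$ but $\sdneg\pi=\Setx{V}$ and $\sdneg\sigma=\Setx{\Setx{a,b,w},\Setx{c}}$, so the specific partition $(\sdneg\sigma)[X]$ you need really can fail to be $\pi[X]$-narrow. What the rest of your argument (and the paper's) actually requires is the stronger, universally quantified hypothesis that \emph{every} $\sigma[X]$-narrow partition of $X$ is $\pi[X]$-narrow; the paper's proof assumes exactly this after declaring it an ``equivalent'' reformulation of the inequality between the finest partitions, and that equivalence is precisely what your example-sensitive step would have to supply. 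As written, your proposal makes that unsupported leap explicit, so the narrow case is not proved; to close it you must either work from the universal form of the hypothesis or give a separate argument for why $(\sdneg\sigma)[X]$ is $\pi[X]$-narrow, and the configuration above shows this cannot be done from $\sdneg{\pi[X]}\leq\sdneg{\sigma[X]}$ and $\sdpos{\pi[X]}\leq\sdpos{\sigma[X]}$ alone.
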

\begin{proof}
  Note that by Observation~\ref{obs:similar}(i), $\inner H\pi X = \inner
  H\sigma X$. We need to prove that
  \begin{align}
    \text{if\quad} \sdneg{\pi[X]} \leq
    \sdneg{\sigma[X]}, &\text{\quad then\quad}
    \sdneg\pi \leq \sdneg\sigma,\label{eq:sdneg}
  \end{align}
  and an analogous assertion (\ref{eq:sdneg}$^+$) with all occurences
  of `$-$' replaced by `$+$'.

  We prove~\eqref{eq:sdneg}. By the definition of $\sdneg\sigma$,
  \eqref{eq:sdneg} is equivalent to the statement that
  \begin{align*}
    &\text{if every $\sigma[X]$-narrow partition of $X$ is
      $\pi[X]$-narrow (in $\inner H\pi X$),}\\
    &\text{then every $\sigma$-narrow partition of $V$ is
      $\pi$-narrow (in $H$).}
  \end{align*}
  Assume thus that every $\sigma[X]$-narrow partition is
  $\pi[X]$-narrow and that $\PP$ is a $\sigma$-narrow partition of
  $V$. For contradiction, suppose that $\PP$ is not $\pi$-narrow.

  We claim that $\PP[X]$ is $\sigma[X]$-narrow in $\inner H\sigma
  X$. Let $e\cap X$ be a $\PP[X]$-crossing hyperedge of $\inner H\sigma
  X$ (where $e\in E(H)$). Then $e$ is $\PP$-crossing, and since $\PP$
  is $\sigma$-narrow, $\sigma(e)$ is $\PP$-crossing. By the definition
  of $\inner H\sigma X$, $\sigma(e)\sub X$ and thus $\sigma(e) =
  \sigma[X](e\cap X)$ is $\PP[X]$-crossing. This proves the claim.

  Since every $\sigma[X]$-narrow partition of $X$ is assumed to be
  $\pi[X]$-narrow, $\PP[X]$ is $\pi[X]$-narrow. 

  On the other hand, $\PP$ is not $\pi$-narrow, so there is a
  $\PP$-crossing hyperedge $f$ of $H$ such that $\pi(f)$ is not
  $\PP$-crossing. However, $\sigma(f)$ is $\PP$-crossing as $\PP$ is
  $\sigma$-narrow. Thus, $\pi(f)\neq\sigma(f)$, and since $\pi$ and
  $\sigma$ are $X$-similar, both $\pi(f)$ and $\sigma(f)$ are subsets
  of $X$. It follows that $\sigma(f)$, and therefore also the
  hyperedge $f\cap X$ of $\inner H\sigma X = \inner H\pi X$, is
  $\PP[X]$-crossing. We have seen that $\PP[X]$ is $\pi[X]$-narrow,
  and this observation implies that $\pi(f)$ is $\PP[X]$-crossing and
  therefore $\PP$-crossing. This contradicts the choice of $f$.

  The proof of~(\ref{eq:sdneg}$^+$) is similar to the above but
  simpler. The details are omitted.
\end{proof}

%%%%%%%%%%%%%%%%%%%%%%%%%%%%%%%%%%%%%%%%%%%%%%%%%%%%%%%%%%%%%%%%%%%%%%

\section{Partition sequences}
\label{sec:partition-sequences}

Besides the order $\unlhd$ introduced in Section~\ref{sec:narrow}, we
will need another derived order $\preceq$ on quasigraphs, one that is
used in the basic optimization strategy in our proof. Let $\pi$ be a
quasigraph in $H$. Similarly as in~\cite{bib:Kai-short}, we associate
with $\pi$ a sequence of partitions of $V$, where each partition is a
refinement of the preceding one. Since $H$ is finite, the partitions
`converge' to a limit partition whose classes have a certain
favourable property.

Recall from Section~\ref{sec:narrow} that there is a uniquely defined
partition of $V$ into positive $\pi$-parts; we will let this partition
be denoted by $\PP_0^\pi$. The \emph{partition sequence} of $\pi$ is
the sequence
\begin{equation*}
  \PPP^\pi = (\PP_0^\pi,\PP_1^\pi,\dots),
\end{equation*}
where for even (odd) $i\geq 1$, $\PP_i^\pi$ is obtained as the union
of partitions of $X$ into positive (negative, respectively)
$\pi[X]$-parts of $\inner H \pi X$ as $X$ ranges over classes of
$\PP_{i-1}^\pi$. (See Figure~\ref{fig:sequence}.) Thus, for instance,
for even $i\geq 2$ we can formally write
\begin{equation*}
  \PP_i^\pi = \bigcup_{X\in\PP_{i-1}^\pi}\sdpos{\pi[X]}.
\end{equation*}
Since $H$ is finite, we have $\PP^\pi_k = \PP^\pi_{k+2}$ for large
enough $k$, and we set $\PP^\pi_\infty = \PP^\pi_k$. 

Let us call a set $X\sub V$ \emph{$\pi$-solid} (in $H$) if $\pi[X]$ is
a quasitree with tight complement in $\inner H \pi X$.  By the
construction, any class of $\PP^\pi_\infty$ is $\pi$-solid.

\begin{figure}
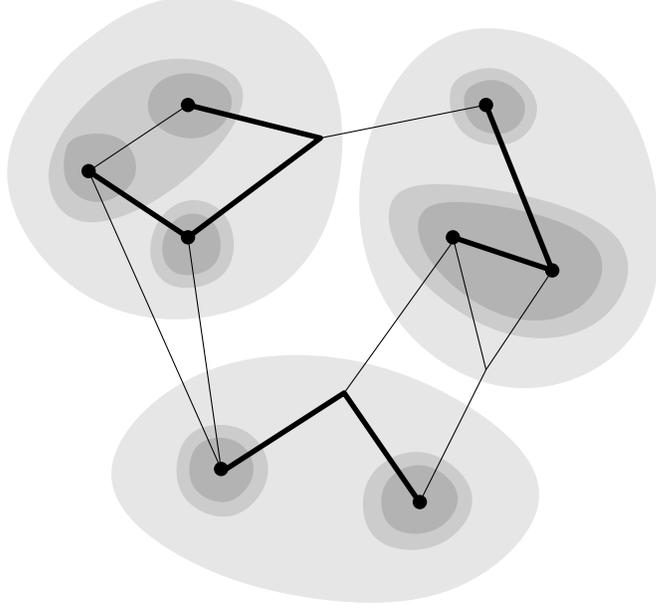

  \centering
  \fig{10}
  \caption{The partition sequence of the quasigraph $\tau$ from Figure
    \ref{fig:parts}. Partitions $\PP_0^\tau, \PP_1^\tau$ and $\PP_2^\tau$
    are shown in different gray shades from light to dark. Note that
    the classes of $\PP_2^\tau$ are $\tau$-solid.}
\label{fig:sequence}
\end{figure}

Let us define a lexicographic order on sequences of partitions: if
$(\AAA_0,\AAA_1,\dots)$ and $(\BB_0,\BB_1,\dots)$ are sequences of
partitions of $V$, write
\begin{equation*}
  (\AAA_0,\AAA_1,\dots) \leqx (\BB_0,\BB_1,\dots)
\end{equation*}
if there exists some $i$ such that for $j < i$, $\AAA_j = \BB_j$, while
$\AAA_i$ strictly refines $\BB_i$. 

We can now define the order $\preceq$ on quasigraphs as promised. Let
$\pi$ and $\sigma$ be quasigraphs in $H$. Define
\begin{equation*}
  \pi \preceq \sigma \text{\quad if\quad} \pi\unlhd\sigma\text{ and }\PPP^\pi \leqx \PPP^\sigma.
\end{equation*}
If $\pi\preceq\sigma$ but $\sigma\not\preceq\pi$, we write
$\pi\prec\sigma$.

From Lemma~\ref{l:improve}, we can deduce a similar observation
regarding the order $\preceq$ (in which the implication is actually
replaced by equivalence). 

\begin{lemma}\label{l:prec}
  Let $X\sub V$ and assume that either $X$ is a positive $\pi$-part of
  $H$, or $\PP_0^\pi$ is trivial and $X$ is a negative $\pi$-part of
  $H$. Let $\pi$ and $\sigma$ be $X$-similar quasigraphs in $H$. Then
  the following holds:
  \begin{equation*}
    \pi[X] \preceq \sigma[X] \text{\quad if and only if\quad} \pi
    \preceq \sigma.
  \end{equation*}
\end{lemma}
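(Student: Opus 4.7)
The plan is to establish a shift correspondence between the partition sequences $\PPP^\pi, \PPP^\sigma$ and $\PPP^{\pi[X]}, \PPP^{\sigma[X]}$, and to combine it with Lemma~\ref{l:improve}. By Observation~\ref{obs:similar}(i), $\inner H\pi X = \inner H\sigma X$; more generally, by $X$-similarity, for any $Y\sub V$ disjoint from $X$ we have $\inner H\pi Y = \inner H\sigma Y$ and $\pi[Y] = \sigma[Y]$, so $\sdpos{\pi[Y]} = \sdpos{\sigma[Y]}$ and $\sdneg{\pi[Y]} = \sdneg{\sigma[Y]}$. Set $j_0 := 0$ in Case A (where $X$ is a positive $\pi$-part, so $X\in\PP_0^\pi$) and $j_0 := 2$ in Case B (where $\PP_0^\pi$ is trivial and $X\in\sdneg\pi = \PP_1^\pi$).

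Under the assumption $\pi\unlhd\sigma$, I would first verify that $\PP_k^\pi = \PP_k^\sigma$ for all $k < j_0$, synchronizing the two sequences by the time $X$ first appears as a common class. This is vacuous in Case A. In Case B, $\{V\}=\sdpos\pi\leq\sdpos\sigma$ forces $\PP_0^\sigma=\{V\}$, and $X\in\sdneg\pi$ combined with Observation~\ref{obs:similar}(iii) and $\sdneg\pi\leq\sdneg\sigma$ yields $\sdneg\pi=\sdneg\sigma$, hence $\PP_1^\pi=\PP_1^\sigma$. The heart of the argument is then a shift correspondence, proved by induction on $i\geq 0$:
\begin{equation*}
   \PP_{j_0+i}^\pi[X] = \PP_i^{\pi[X]}
   \qquad\text{and}\qquad
   \PP_{j_0+i}^\pi[V\setminus X] = \PP_{j_0+i}^\sigma[V\setminus X],
\end{equation*}
together with the analogous statement for $\sigma$. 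The induction relies on the fact that later partitions refine $\PP_{j_0}^\pi$, so each of their classes lies either in $X$ or in $V\setminus X$; classes inside $X$ evolve exactly as in $\PPP^{\pi[X]}$, while for any class $Z\sub V\setminus X$ the next refinement by $\sdpos{\pi[Z]}$ or $\sdneg{\pi[Z]}$ agrees with that by $\sigma$ because $\pi[Z]=\sigma[Z]$.

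With this correspondence, $\PP_{j_0+i}^\pi$ strictly refines $\PP_{j_0+i}^\sigma$ exactly when $\PP_i^{\pi[X]}$ strictly refines $\PP_i^{\sigma[X]}$, whence $\PPP^\pi\leqx\PPP^\sigma$ if and only if $\PPP^{\pi[X]}\leqx\PPP^{\sigma[X]}$. Together with the implication $\pi[X]\unlhd\sigma[X]\Rightarrow\pi\unlhd\sigma$ from Lemma~\ref{l:improve}, this gives the forward direction $\pi[X]\preceq\sigma[X] \Rightarrow \pi\preceq\sigma$. For the converse, one needs $\pi\unlhd\sigma \Rightarrow \pi[X]\unlhd\sigma[X]$: the $\sdpos$-part follows from the synchronization, since $\PPP^\pi\leqx\PPP^\sigma$ combined with $\PP_k^\pi=\PP_k^\sigma$ for $k<j_0$ forces $\PP_{j_0}^\pi\leq\PP_{j_0}^\sigma$, and restricting to $X$ gives $\sdpos{\pi[X]}\leq\sdpos{\sigma[X]}$; the $\sdneg$-part is handled by a direct extension argument analogous to Lemma~\ref{l:improve}, lifting a $\sigma[X]$-narrow partition of $X$ to a $\sigma$-narrow partition of $V$ and exploiting the structural hypothesis on $X$ to deal with hyperedges straddling $X$ and $V\setminus X$. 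The principal obstacle is precisely this reverse direction of $\unlhd$, where the offset $j_0$ and the positive/negative nature of $X$ as a $\pi$-part (together with triviality of $\PP_0^\pi$ in Case B) must be leveraged delicately.
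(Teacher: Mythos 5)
Your overall strategy is the paper's own: the ``shift correspondence'' you describe is exactly the paper's decomposition of the partition sequences (its equations for $\PPP^\pi$ and $\PPP^\sigma$ in the two cases, with offset $0$ resp.\ $2$), your Case~B synchronization ($\PP_0^\sigma=\Setx V$ and $\sdneg\pi=\sdneg\sigma$ via Observation~\ref{obs:similar}(iii)) is the paper's observation that the first two terms agree, and the forward direction via Lemma~\ref{l:improve} plus transfer of $\leqx$ is identical. The correspondence itself is sound: in Case~A it holds unconditionally from $X$-similarity (no $\sigma$-edge crosses $X$), and in Case~B your use of $\pi\unlhd\sigma$ (available in both directions, via Lemma~\ref{l:improve} in the forward one) correctly supplies $X\in\sdneg\sigma$, which the correspondence for $\sigma$ genuinely needs.

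The gap is in the converse direction, at the $\sdneg{}$-component of $\pi[X]\unlhd\sigma[X]$ --- precisely the point you flag as ``the principal obstacle'' and then leave to a ``lifting'' argument that does not work as described. If $Q$ is a $\sigma[X]$-narrow partition of $X$, the natural lifts (e.g.\ $Q\cup\Setx{V\setminus X}$, or $Q$ together with singletons or with $\sdneg\sigma[V\setminus X]$) fail to be $\sigma$-narrow: a hyperedge $e$ straddling $X$ and $V\setminus X$ with $\sigma(e)=\emptyset$, or with $\sigma(e)=\pi(e)\sub V\setminus X$, crosses any such lift while $\sigma(e)$ does not, and nothing in the hypothesis that $X$ is a positive (or negative) $\pi$-part excludes such hyperedges --- it only constrains edges of $\sigma^*$, not unused or externally represented hyperedges. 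So the lifting route breaks, and no argument is actually supplied. The needed statement is true and is available inside your own framework: extend the synchronization one step further. In Case~A, $\pi\unlhd\sigma$ plus the correspondence forces $\sdpos{\sigma[X]}=\Setx X$, hence $\PP_0^\pi=\PP_0^\sigma$; then $\PPP^\pi\leqx\PPP^\sigma$ gives $\PP_1^\pi\leq\PP_1^\sigma$, and restricting to $X$ (where $\PP_1^\pi[X]=\sdneg{\pi[X]}$ and $\PP_1^\sigma[X]=\sdneg{\sigma[X]}$ by your correspondence) yields $\sdneg{\pi[X]}\leq\sdneg{\sigma[X]}$ --- this is exactly how the paper concludes. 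In Case~B it is even simpler: you already derived $X\in\sdneg\sigma$, which makes $\sdneg{\sigma[X]}$ trivial, and $\sdneg{\pi[X]}$ is trivial since $X$ is a negative $\pi$-part.
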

\begin{proof}
  We consider two cases depending on whether $X$ is a positive or
  negative $\pi$-part of $H$.  

  \case1{$X$ is a positive $\pi$-part of $H$.}  

  Since $\pi$ and $\sigma$ are $X$-similar, we have
  \begin{align}\label{eq:partseq}
    \PPP^\pi &= (\PP_0^\pi, \quad
    \PP_1^{\pi[X]} \cup \PP_1^\pi[V-X], \quad
    \PP_2^{\pi[X]} \cup \PP_2^\pi[V-X], \quad
    \dots)\text{\quad and}\notag\\
    \PPP^\sigma &= (\PP_0^\sigma, \quad
    \PP_1^{\sigma[X]} \cup \PP_1^\pi[V-X], \quad
    \PP_2^{\sigma[X]} \cup \PP_2^\pi[V-X], \quad
    \dots).
  \end{align}

  Assume first that $\pi[X] \preceq
  \sigma[X]$. Equations~\eqref{eq:partseq} imply that for each $i\geq
  1$, $\PP_i^\pi \leq \PP_i^\sigma$. Furthermore, $\pi[X] \unlhd
  \sigma[X]$ and Lemma~\ref{l:improve} imply that
  $\pi\unlhd\sigma$. In particular,
  \begin{equation*}
    \PP_0^\pi = \sdpos\pi \leq \sdpos\sigma = \PP_0^\sigma
  \end{equation*}
  so $\PPP^\pi \leqx \PPP^\sigma$ and therefore also $\pi \preceq
  \sigma$.

  Conversely, assume that $\pi\preceq\sigma$. The fact that
  $\PPP^\pi\leqx\PPP^\sigma$ together with \eqref{eq:partseq} implies
  that for $i\geq 1$, $\PP_i^{\pi[X]} \leq \PP_i^{\sigma[X]}$. Recall
  that $X$ is a positive $\pi$-part of $H$. We claim that $X$ is also
  a positive $\sigma$-part of $H$; indeed, this follows from the fact
  that $\PP_0^\pi \leq \PP_0^\sigma$ and that $\pi$ and $\sigma$ are
  $X$-similar. This claim implies
  \begin{equation}\label{eq:zero}
    \PP_0^{\pi[X]} = X = \PP_0^{\sigma[X]}
  \end{equation}
  and, consequently, $\PPP^{\pi[X]} \leqx \PPP^{\sigma[X]}$. It
  remains to verify that $\pi[X] \unlhd \sigma[X]$. This follows from
  \eqref{eq:zero} and the observation that $\PP_1^{\pi[X]} \leq
  \PP_1^{\sigma[X]}$. (Here we use the fact that if $\PP_0^\pi$ is
  trivial, then $\PP_1^\pi = \sdneg\pi$.)
  
  \case2{$\PP_0^\pi$ is trivial and $X$ is a negative $\pi$-part of
    $H$.}

  In this case, equations~\eqref{eq:partseq} are replaced by
  \begin{align}\label{eq:partseq2}
    \PPP^\pi &= (\Setx V, \quad
    \sdneg{\pi[X]} \cup \PP_1^\pi[V-X], \notag\\
    &\qquad\PP_0^{\pi[X]} \cup \PP_2^\pi[V-X], \quad
    \PP_1^{\pi[X]} \cup \PP_3^\pi[V-X], \quad
    \dots)\text{\quad and}\notag\\
    \PPP^\sigma &= (\Setx V, \quad
    \sdneg{\sigma[X]} \cup \PP_1^\pi[V-X], \notag\\
    &\qquad\PP_0^{\sigma[X]} \cup \PP_2^\pi[V-X], \quad
    \PP_1^{\sigma[X]} \cup \PP_3^\pi[V-X], \quad
    \dots).
  \end{align}
  Assume first that $\pi\preceq\sigma$. Since $X$ is a positive
  $\pi$-part of $H$, the partition $\sdneg{\pi[X]}$
  appearing in the second term of $\PPP^\pi$ is trivial. A similar
  observation holds for $\sigma$ in place of $\pi$. Hence, $\PPP^\pi$
  and $\PPP^\sigma$ are equal in their first two terms and
  \eqref{eq:partseq2} directly implies that $\PPP^{\pi[X]} \leqx
  \PPP^{\sigma[X]}$. Moreover, $\pi[X]\unlhd\sigma[X]$ is implied by
  \eqref{eq:partseq2} as well. We conclude that $\pi[X] \preceq
  \sigma[X]$.

  The converse implication follows from \eqref{eq:partseq2} without
  any further effort. The proof is complete.
\end{proof}

\begin{corollary}\label{cor:long}
  Let $\pi$ and $\sigma$ be $X$-similar quasigraphs in $H$, where $X
  \in \PP_i^\pi$ for some $i$. Then the following holds:
  \begin{equation*}
    \pi[X] \preceq \sigma[X] \text{\quad if and only if\quad} \pi
    \preceq \sigma.
  \end{equation*}
\end{corollary}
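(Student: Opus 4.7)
The plan is to prove the corollary by induction on $i$, using Lemma~\ref{l:prec} as both the base case and the engine driving the inductive step. The base case $i=0$ is immediate: by definition $\PP_0^\pi = \sdpos\pi$, so $X\in\PP_0^\pi$ means $X$ is a positive $\pi$-part of $H$, and the first alternative of Lemma~\ref{l:prec} applies directly.

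For the inductive step, I would let $Y$ denote the unique class of $\PP_{i-1}^\pi$ containing $X$ and split the argument into two applications: first passing from $(H,\pi,\sigma)$ to $(\inner H\pi Y,\pi[Y],\sigma[Y])$ via the induction hypothesis, and then from there to $(\inner H\pi X,\pi[X],\sigma[X])$ via a single application of Lemma~\ref{l:prec}. Two preparatory observations are needed. First, $\pi$ and $\sigma$ are $Y$-similar in $H$, which follows in a line from $X$-similarity together with $X\sub Y$; hence Observation~\ref{obs:similar}(i) gives $\inner H\pi Y = \inner H\sigma Y$, and $\pi[Y],\sigma[Y]$ are $X$-similar quasigraphs in this common hypergraph. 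Second, $(\pi[Y])[X] = \pi[X]$ and $(\sigma[Y])[X] = \sigma[X]$ — a routine unpacking of the definition of the $\pi$-section, using $X\sub Y$ to see that the defining conditions $\size{e\cap X}\geq 2$ and $\pi(e)\sub X$ already force $\size{e\cap Y}\geq 2$ and $\pi(e)\sub Y$. Granting these, the induction hypothesis applied to $Y\in\PP_{i-1}^\pi$ yields $\pi\preceq\sigma\iff\pi[Y]\preceq\sigma[Y]$, while Lemma~\ref{l:prec} applied inside $\inner H\pi Y$ will yield $\pi[Y]\preceq\sigma[Y]\iff\pi[X]\preceq\sigma[X]$.

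The only nontrivial point — which I expect to be the main obstacle — is verifying the hypothesis of Lemma~\ref{l:prec} at the second stage. If $i$ is even, $X$ is a positive $\pi[Y]$-part of $\inner H\pi Y$ by the very definition of $\PP_i^\pi$, and the first alternative of the lemma applies at once. If $i$ is odd, $X$ is only a negative $\pi[Y]$-part, and I must additionally verify that $\sdpos{\pi[Y]}$ is trivial so that the second alternative applies. For this I would use that $(\pi[Y])^*$ is the subgraph of $\pi^*$ induced on $Y$, and that $Y$, lying in $\PP_{i-1}^\pi$ with $i-1$ even, is the vertex set of a component of either $\pi^*$ (when $i=1$) or of $(\pi[Z])^* = \pi^*[Z]$ for some $Z\in\PP_{i-2}^\pi$ (when $i\geq 3$); in either case $\pi^*[Y]$ is connected, and Lemma~\ref{l:tree-wide} forces $\sdpos{\pi[Y]}$ to be trivial. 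Apart from this bookkeeping about parities, the argument is essentially a chase of definitions, since all the substantive content has already been packaged into Lemma~\ref{l:prec}.
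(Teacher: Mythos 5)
Your proof is correct and is exactly the induction the paper has in mind: the published proof of Corollary~\ref{cor:long} is the one-liner ``follows from Lemma~\ref{l:prec} by easy induction,'' and your write-up supplies precisely that induction, with the supporting details ($X$-similarity passing to $Y$-similarity, $(\pi[Y])[X]=\pi[X]$, and the parity check that $\sdpos{\pi[Y]}$ is trivial when $i$ is odd) verified correctly.
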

\begin{proof}
  Follows from Lemma~\ref{l:prec} by easy induction.
\end{proof}

We conclude this section by a lemma that suggests a relation between
$\preceq$-maximal and acyclic quasigraphs. If $\pi$ and $\sigma$ are
quasigraphs in $H$, then let us call $\sigma$ a \emph{restriction} of
$\pi$ if for every hyperedge $e$ of $H$, $\sigma(e)$ equals either
$\pi(e)$ or $\emptyset$.

\begin{lemma}
  \label{l:break-cycles}
  Let $\pi$ be a quasigraph in $H$ and $i\geq 0$. If $\pi[X]$ is
  acyclic for each $X\in\PP_i^\pi$, but $\pi$ itself is not acyclic,
  then there exists an acyclic restriction $\sigma$ of $\pi$ such that
  $\sigma \succ \pi$.
\end{lemma}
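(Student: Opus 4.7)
The plan is to prove the following single-step claim and iterate: for any non-acyclic quasigraph $\tau$ in $H$, there exists an edge $e\in E(\tau)$ with $\tau - e \succ \tau$. Starting with $\tau_0 := \pi$ and setting $\tau_{k+1} := \tau_k - e_k$, this produces a strict $\prec$-chain $\pi = \tau_0 \prec \tau_1 \prec \dotsb$; since $\size{E(\tau_k)}$ strictly decreases, the process terminates at an acyclic $\tau_N$, and transitivity of $\prec$ gives the required restriction $\sigma := \tau_N$ with $\pi \prec \sigma$.

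For the single-step claim, fix any cycle $C$ of $\tau^*$ and let $j^*$ be the largest index for which $C$ is contained in a single class $X^* \in \PP_{j^*}^\tau$. Such a $j^*$ exists and is finite: the sequence $\PPP^\tau$ stabilises into $\tau$-solid classes, and no $\tau$-solid class can contain $C$ since this would force $\tau[X^*]^*$ to be a tree. Moreover $j^*$ must be even, for if it were odd then $\PP_{j^*+1}^\tau$ would split $X^*$ into the positive $\tau[X^*]$-parts, and the connected cycle $C\sub \tau^*[X^*]$ would fit inside one such part, contradicting the maximality of $j^*$. Hence $X^*$ is a positive $\tau$-part at level $j^*$, the induced graph $\tau^*[X^*]$ is connected, and $C$ crosses classes of $\sdneg{\tau[X^*]} = \PP_{j^*+1}^\tau[X^*]$. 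Pick an edge $\tau(e)$ on $C$ whose endpoints $u,v$ lie in distinct classes $A$ and $B$ of $\sdneg{\tau[X^*]}$, and set $\sigma := \tau - e$.

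To establish $\tau[X^*] \prec \sigma[X^*]$, first note that $\sigma^*[X^*] = \tau^*[X^*] - \tau(e)$ remains connected (the removed edge lies on the cycle $C\sub \tau^*[X^*]$), so $\sdpos{\sigma[X^*]} = \sdpos{\tau[X^*]} = \Setx{X^*}$. Second, $\sigma[X^*][A]$ and $\sigma[X^*][B]$ still have tight complement: each is obtained from the tight $\tau[X^*][A]$ or $\tau[X^*][B]$ (tight because $A, B \in \sdneg{\tau[X^*]}$) by at most adjoining the single unused hyperedge $e\cap A$ or $e\cap B$ to the underlying hypergraph, an operation that preserves tight complement since every narrow partition of the enlarged hypergraph is automatically narrow in the smaller one (Lemma~\ref{l:tight-narrow}). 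Lemma~\ref{l:sub}(ii), applied inside $\inner H \sigma {X^*}$ to $X_1 := A$ and $X_2 := B$ with the joining hyperedge $f := e \cap X^*$ (which intersects both $A$ and $B$ and satisfies $\sigma[X^*](f) = \emptyset \sub A$), then places $A\cup B$ into a single class of $\sdneg{\sigma[X^*]}$. Thus $\sdneg{\sigma[X^*]}$ is strictly coarser than $\sdneg{\tau[X^*]}$, so $\PPP^{\tau[X^*]}$ and $\PPP^{\sigma[X^*]}$ agree at level $0$ but differ strictly at level $1$, giving $\tau[X^*] \prec \sigma[X^*]$. Corollary~\ref{cor:long}, applied to the $X^*$-similar pair $(\tau,\sigma)$ with $X^* \in \PP_{j^*}^\tau$, finally lifts this to $\tau \prec \sigma$. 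The chief subtlety is to choose $X^*$ as the minimal class of the partition sequence containing the \emph{entire} cycle $C$ (rather than one tied to a particular edge), which is precisely what makes $j^*$ even, $\tau^*[X^*]$ connected, and $\tau(e)$ a non-bridge of $\tau^*[X^*]$, so that the partition sequences align for Corollary~\ref{cor:long} and Lemma~\ref{l:sub}(ii).
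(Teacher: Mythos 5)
The core edge-removal step in your argument (pick a cycle edge whose endpoints lie in different negative parts inside a connected positive part, delete it, check that the positive part stays connected and that Lemma~\ref{l:sub}(ii) merges the two negative parts after noting that adjoining the now-unused hyperedge preserves tight complement, then lift via Corollary~\ref{cor:long}) is exactly the paper's step, with your $j^*$ being the paper's $k-1$. The genuine gap is the statement you build the whole iteration on: that \emph{every} non-acyclic quasigraph $\tau$ admits an edge $e$ with $\tau-e\succ\tau$, justified by ``$\PPP^\tau$ stabilises into $\tau$-solid classes''. That justification, and the claim itself, are false without an acyclicity hypothesis. Take $V=\Setx{a,b,c}$ and let $H$ consist of the 2-hyperedges $ab,bc,ca$, each with multiplicity two, with $\tau$ using one copy of each. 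Then $\tau^*$ is a triangle and $\barr\tau$ is a connected triangle, so $\sdpos\tau=\sdneg\tau=\Setx{V}$ and $\PPP^\tau$ is the all-trivial sequence: the unique class of $\PP^\tau_\infty$ is \emph{not} $\tau$-solid, your $j^*$ does not exist, and in fact no quasigraph $\sigma$ satisfies $\sigma\succ\tau$ (any $\sigma\succeq\tau$ has the same all-trivial data, hence also $\sigma\preceq\tau$). Classes of $\PP^\pi_\infty$ are guaranteed to be quasitrees only when the induced quasigraphs on them are acyclic, and this is precisely what the hypothesis ``$\pi[X]$ is acyclic for each $X\in\PP^\pi_i$'' supplies; the paper uses it exactly there, deducing that any quasicycle of $\pi$ contains a $\PP^\pi_i$-crossing edge, which is what makes your $j^*$ (the paper's least crossing level $k=j^*+1$) finite for $\tau_0=\pi$.

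So the first step of your chain can be repaired verbatim by invoking the hypothesis instead of the false solidity claim, but the iteration then loses its engine: for $\tau_1=\pi-e$ you do not know that $\tau_1[X]$ is acyclic on the classes of any $\PP^{\tau_1}_j$ (the partitions of $\tau_1$ are coarser than those of $\pi$ inside the affected class, so a remaining cycle that crossed $\PP^\pi_i$ need not cross any level of $\PPP^{\tau_1}$), and without such a crossing you cannot select the next edge --- an arbitrary deletion may refine $\sdpos{}$ and fail even $\tau_2\succeq\tau_1$. The paper's proof also iterates (``repeat the previous step'') and is terse about why the needed property persists, but it never asserts your unrestricted single-step claim; your write-up substitutes, at the one point where the hypothesis genuinely matters, a statement that is false, so everything after the first deletion is unsupported. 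To close the gap you would need to prove that your specific deletions preserve the property that every remaining quasicycle crosses some level of the current partition sequence (equivalently, acyclicity of the induced quasigraphs on the classes of the current $\PP_\infty$), or otherwise rework the termination argument.
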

\begin{proof}
  Suppose that $\gamma$ is a quasicycle in $H$ such that
  $E(\gamma)\sub E(\pi)$. By the assumption, not all of the edges of
  $\gamma^*$ are contained in the same class of $\PP_i^\pi$; in other
  words, $\gamma^*$ contains a $\PP_i^\pi$-crossing edge. Let $k \geq
  0$ be the least integer such that $\gamma^*$ contains a
  $\PP_k^\pi$-crossing edge $\gamma(e)$ (where $e\in E(H)$).

  Since $\PP_0^\pi$ is a partition of $V$ into positive $\pi$-parts
  and $\gamma$ is a restriction of $\pi$, there are no
  $\PP_0^\pi$-crossing edges in $\gamma^*$. Thus, $k\geq
  1$. Similarly, if $j\geq 2$ is even and $X\in\PP_{j-1}^\pi$, then
  $\inner H \pi X$ contains no $\PP_j^\pi[X]$-crossing edges. It
  follows that $k$ is odd. Let $Y$ be the class of $\PP_{k-1}^\pi$
  containing all edges of $\gamma^*$ as subsets.
  
  Set $\rho = \pi-e$. Observe that $(\rho[Y])^*$ is a connected graph
  spanning $Y$, since $(\pi[Y])^*$ has this property, and the removal
  of the edge $\pi(e)$ cannot disconnect $(\pi[Y])^*$ as $\pi(e)$ is
  contained in a cycle in $\pi^*$. Thus, $\PP_0^\rho = \Setx{Y}$.

  Assume that $\pi(e) = z_1z_2$ and let $Z_i$ ($i=1,2$) be the class
  of $\PP_k^\pi$ containing $z_i$. Since each $Z_i$ is a class of
  $\sdneg {\pi[Y]}$, $\rho[Z_i]$ has tight complement
  in $\inner H \rho {Z_i}$. Now the hyperedge $e\cap Y$ containing
  $z_1$ and $z_2$ is not used by $\rho$. By
  Lemma~\ref{l:sub}(ii), $Z_1\cup Z_2$ is contained in a class
  of $\sdneg{\rho[Y]}$. Consequently,
  \begin{equation*}
    \sdneg{\rho[Y]} > \sdneg{\pi[Y]}
  \end{equation*}
  and therefore $\rho[Y]\succ\pi[Y]$. By Corollary~\ref{cor:long},
  $\rho \succ \pi$.

  If $\rho$ is not acyclic, we repeat the previous step. Since $H$ is
  finite, we will arrive at an acyclic restriction $\sigma\succ\pi$ of
  $\pi$ after finitely many steps.
\end{proof}

%%%%%%%%%%%%%%%%%%%%%%%%%%%%%%%%%%%%%%%%%%%%%%%%%%%%%%%%%%%%%%%%%%%%%%

\section{Contraction and substitution}
\label{sec:subst}

In this section, we introduce two concepts related to partitions:
contraction and substitution.

Let $\PP$ be a partition of $V$. The \emph{contraction} of $\PP$ is
the operation whose result is the hypergraph $H/\PP$ defined as
follows. For $A\sub V$, define $A/\PP$ as the subset of $\PP$
consisting of all the classes $P\in\PP$ such that $A\cap
P\neq\emptyset$. The hypergraph $H/\PP$ has vertex set $\PP$ and it
hyperedges are all the sets of the form $e/\PP$, where $e$ ranges over
all $\PP$-crossing hyperedges. Thus, $H/\PP$ is a 3-hypergraph,
possibly with multiple hyperedges. As in the case of induced
subhypergraphs, each hyperedge $f$ of $H/\PP$ is understood to have an
assigned corresponding hyperedge $e$ of $H$ such that $f=e/\PP$.

\begin{figure}
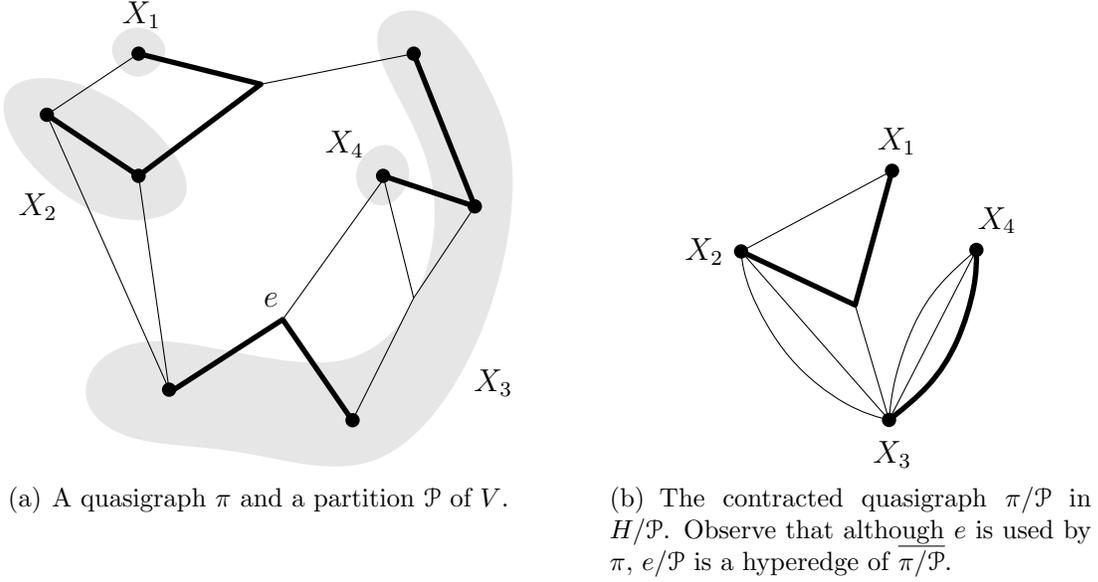

  \centering
  \sfig{13}{A quasigraph $\pi$ and a partition $\PP$ of $V$.}
  \hfilll{}
  \sfignarrow{14}{The contracted quasigraph $\pi/\PP$ in $H/\PP$. Observe that
    although $e$ is used by $\pi$, $e/\PP$ is a hyperedge of
    $\barr{\pi/\PP}$.}{1cm}
  \caption{An example of contraction.}
  \label{fig:contraction}
\end{figure}

If $\pi$ is a quasigraph in $H$, we define $\pi/\PP$ as the quasigraph
in $H/\PP$ consisting of the hyperedges $e/\PP$ such that $\pi(e)$
is $\PP$-crossing; the representation is defined by
\begin{equation*}
  (\pi/\PP)(e/\PP) = \pi(e)/\PP.
\end{equation*}
(Contraction is illustrated in Figure~\ref{fig:contraction}.)  In
keeping with our notation, the complement of $\pi/\PP$ in $H/\PP$ is
denoted by $\barr{\pi/\PP}$. Observe that if $e\in E(H)$, then $e/\PP$
is an edge of $\barr{\pi/\PP}$ if and only if $e$ is $\PP$-crossing
and $\pi(e)$ is not. The following lemma will be useful:

\begin{lemma}\label{l:cycle-di}
  Let $\RR\leq\PP$ be partitions of $V$ and $\pi$ be a quasigraph in
  $H$. If $\gamma/\RR$ is a quasicycle in $\barr{\pi/\RR}$, then one
  of the following holds:
  \begin{enumerate}[\quad(a)]
  \item for some $X\in\PP$, $\gamma[X]/\RR[X]$ is a quasicycle in
    the complement of $\pi[X]/\RR[X]$ in $\inner H \pi X/\RR[X]$,
  \item $\gamma/\PP$ is a nonempty quasigraph in $\barr{\pi/\PP}$ such
    that $(\gamma/\PP)^*$ is an eulerian graph (a graph with all
    vertex degrees even).
  \end{enumerate}
\end{lemma}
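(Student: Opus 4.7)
The plan is a case analysis on how the cycle in $(\gamma/\RR)^*$ projects onto the coarser partition $\PP$. Write the cycle as $R_1 R_2 \cdots R_k R_1$ with $R_i \in \RR$, and for each $i$ pick a hyperedge $e_i \in E(H)$ with $\gamma(e_i)/\RR = \{R_i, R_{i+1}\}$ (indices modulo $k$). By the hypothesis that $\gamma/\RR$ lies in $\barr{\pi/\RR}$, each $\pi(e_i)$ is non-$\RR$-crossing. Since $\RR \leq \PP$, each $R_i$ is contained in a unique class $P_i \in \PP$. The two cases of the lemma correspond exactly to the cyclic sequence $P_1, \ldots, P_k$ being constant or not.

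Suppose first that $P_1 = P_2 = \cdots = P_k = X$ for some $X \in \PP$. I aim to derive (a) by checking that the cycle descends to $\gamma[X]/\RR[X]$ in $\inner H\pi X / \RR[X]$ and avoids $\pi[X]/\RR[X]$. The only nontrivial point is that $\pi(e_i) \subseteq X$, so that $e_i \cap X$ is a hyperedge of $\inner H\pi X$. This is immediate if $\pi(e_i) = \emptyset$; otherwise, as $|e_i| \leq 3$ and both $\pi(e_i)$ and $\gamma(e_i)$ are 2-subsets of $e_i$, they share some vertex $v$, which lies in $X$ because $\gamma(e_i) \subseteq X$. Since $\pi(e_i)$ is non-$\RR$-crossing, it is contained in the $\RR$-class of $v$, and this $\RR$-class lies entirely in $X$ because $\RR \leq \PP$. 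The same argument also shows that $\pi[X](e_i \cap X) = \pi(e_i)$ is non-$\RR[X]$-crossing. Hence the $k$ hyperedges $(e_i \cap X)/\RR[X]$ form a cycle of $(\gamma[X]/\RR[X])^*$ lying in the complement of $\pi[X]/\RR[X]$, as required.

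In the remaining case, some $i$ has $P_i \neq P_{i+1}$, so $\gamma(e_i)$ is $\PP$-crossing and $e_i/\PP$ is used by $\gamma/\PP$; since non-$\RR$-crossing implies non-$\PP$-crossing, $e_i/\PP$ belongs to $\barr{\pi/\PP}$, establishing nonemptyness. For the eulerian property, note that any hyperedge used by $\gamma/\PP$ has $\PP$-crossing (hence $\RR$-crossing) representation under $\gamma$, so it contributes an edge to the cycle $(\gamma/\RR)^*$; consequently the edges of $(\gamma/\PP)^*$ coincide, as a multiset, with the edges $\{P_i, P_{i+1}\}$ of the closed walk obtained from $P_1, P_2, \ldots, P_k, P_1$ by deleting indices with $P_i = P_{i+1}$. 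In any closed walk, each vertex contributes $2$ to the degree of the resulting multigraph per visit (each visit brings in one incoming and one outgoing edge after duplicate removal), so every degree of $(\gamma/\PP)^*$ is even.

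The main obstacle is the verification in case (a) that $\pi(e_i) \subseteq X$; it hinges on the combinatorial fact that two distinct 2-subsets of a 3-set must intersect, together with the refinement hypothesis $\RR \leq \PP$. Case (b) reduces to a standard closed-walk argument once one observes that $\PP$-crossing forces $\RR$-crossing and hence that the only candidate edges of $(\gamma/\PP)^*$ are the projections of edges already present in the cycle $(\gamma/\RR)^*$.
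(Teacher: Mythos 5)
Your proof is correct and follows essentially the same route as the paper: the identical dichotomy on whether the cycle of $(\gamma/\RR)^*$ stays inside a single class of $\PP$, with case (a) handled by checking that the relevant representations stay inside $X$ and case (b) by an even-degree argument for the projected cycle. The only difference is cosmetic — you verify directly (via the intersection of the two $2$-subsets of a $3$-hyperedge) what the paper delegates to its two ``formal equalities'' relating sections and contractions, and you use a closed-walk count where the paper uses vertex identification.
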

\begin{proof}
  We will use two formal equalities whose proof is left to the kind
  reader as a slightly tedious exercise: for $X\in\PP$ and any
  quasigraph $\sigma$ in $H$,
  \begin{align}
    \sigma[X]/\RR[X] &= (\sigma/\RR)[\RR[X]],\label{eq:formal-sigma}\\
    \inner H \pi X/\RR[X] &= \inner{(H/\RR)}{\pi/\RR}{\RR[X]}.\label{eq:formal-H}
  \end{align}

  Let $\gamma/\RR$ be a quasicycle in $\barr{\pi/\RR}$. Suppose that
  there is $X\in\PP$ such that every edge of $(\gamma/\RR)^*$ is a
  subset of $\RR[X]$. Let $\tilde\gamma = (\gamma/\RR)[\RR[X]]$. Thus,
  $\tilde\gamma$ is a quasicycle in $(H/\RR)[\RR[X]]$ and
  $E(\tilde\gamma)$ is disjoint from $E((\pi/\RR)[\RR[X]])$. We infer
  that $\tilde\gamma$ is a quasigraph in
  $\inner{(H/\RR)}{\pi/\RR}{\RR[X]}$. Using \eqref{eq:formal-H}, we
  find that $\tilde\gamma$ is a quasigraph in $\inner H \pi
  X/\RR[X]$. Finally, we use \eqref{eq:formal-sigma} twice (for
  $\gamma$ and $\pi$) and conclude that condition (a) holds.

  Thus, we may assume that the endvertices $Y_1,Y_2$ of some edge
  $\gamma(e)$ of $(\gamma/\RR)^*$ are classes of $\RR$ contained in
  different classes of $\PP$ (say, $X_1$ and $X_2$,
  respectively). Thus, $\gamma/\PP$ is a nonempty quasigraph in
  $H/\PP$. Furthermore, $E(\gamma/\PP)$ is clearly disjoint from
  $E(\pi/\PP)$. To verify (b), it remains to prove that
  $(\gamma/\PP)^*$ is eulerian. This is immediate from the fact that
  $(\gamma/\PP)^*$ can be obtained from the graph $(\gamma/\RR)^*$
  (which consists of a cycle and isolated vertices) by identifying
  certain sets of vertices (namely those contained in the same class
  of $\PP$).
\end{proof}

If $X\sub V$ and $\sigma$ is a quasigraph in $\inner H \pi X$, we
define the \emph{substitution} of $\sigma$ into $\pi$ as the operation
which produces the following quasigraph $\subst\pi\sigma$ in $H$:
\begin{equation*}
  (\subst\pi\sigma)(e) = 
  \begin{cases}
    \pi(e) & \text{ if $e\cap X\notin E(\inner H \pi X)$,}\\
    \sigma(e\cap X) & \text{ otherwise.}
  \end{cases}
\end{equation*}
This yields a well-defined represented subhypergraph of $H$. (See
Figure~\ref{fig:subst}.) More generally, let $\PP$ be a family of
disjoint subsets of $V$ and for each $X\in\PP$, let $\sigma_X$ be a
quasigraph in $\inner H \pi X$. Assume we substitute each $\sigma_X$
into $\pi$ in any order. For distinct $X\in\PP$, the hyperedge sets of
the hypergraphs $\inner H \pi X$ are pairwise disjoint, since $e\in
E(\inner H \pi X)$ only if $\size{e\cap X}\geq 2$. It follows easily
that the resulting hypergraph $\sigma$ in $H$ is independent of the
chosen order. This hypergraph will be denoted by
\begin{equation*}
  \sigma = \subst\pi{\Set{\sigma_X}{X\in\PP}}.
\end{equation*}

\begin{figure}
  \centering
  \sfig{15}{A quasigraph $\pi$ in $H$ and a set $X\sub V$.}
  \hfilll
  \sfig{16}{A quasigraph $\sigma$ in $\inner H \pi X$.}
  \\[8mm]
  \hfilll
  \sfig{17}{The quasigraph $\subst\pi\sigma$.}
  \hfilll
  \caption{An example of substitution.}
  \label{fig:subst}
\end{figure}

Substitution behaves well with respect to taking induced quasigraphs
and contraction:
\begin{lemma}
  \label{l:subst}
  Let $\pi$ be a quasigraph in $H$ and $\PP$ a partition of
  $V$. Suppose that for each $X\in\PP$, $\sigma_X$ is a quasigraph in
  $\inner H \pi X$, and define
  \begin{equation*}
    \sigma = \subst\pi{\Set{\sigma_X}{X\in\PP}}.
  \end{equation*}
  Then the following holds for every $Y \sub X \in \PP$:
  \begin{enumerate}[\quad(i)]
  \item $\inner H \sigma Y = \inner{(\inner H \pi X)} {\sigma_X} Y$,
  \item $\sigma[Y] = \sigma_X[Y]$.
  \end{enumerate}
  Furthermore,\noindent
  \begin{enumerate}[\quad(iii)]
  \item $\sigma/\PP = \pi/\PP$. 
  \end{enumerate}
\end{lemma}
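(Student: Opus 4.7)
The plan is to verify each of (i)--(iii) by direct case analysis on hyperedges of $H$. The key preliminary observation I want to isolate is that, because $\size e \leq 3$, for any fixed $e\in E(H)$ there is at most one $X\in\PP$ with $e\cap X\in E(\inner H\pi X)$; for this unique $X$ (when it exists) we have $\sigma(e)=\sigma_X(e\cap X)\sub X$, while otherwise $\sigma(e)=\pi(e)$. This is exactly the remark justifying well-definedness of $\sigma$ made immediately before the lemma, and it drives every subsequent case split.

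For (i), fix $Y\sub X\in\PP$. I would compare the two $Y$-sections hyperedge by hyperedge, splitting on whether $\pi(e)\sub X$. If $\pi(e)\sub X$, then $\size{e\cap Y}\geq 2$ combined with $Y\sub X$ forces $e\cap X\in E(\inner H\pi X)$, and hence $\sigma(e)=\sigma_X(e\cap X)$; the defining conditions for membership on both sides then collapse to the single condition $\sigma_X(e\cap X)\sub Y$. If $\pi(e)\not\sub X$, then $e\cap X\notin E(\inner H\pi X)$, so no hyperedge is contributed on the right-hand side, and $\sigma(e)=\pi(e)\not\sub X\supseteq Y$, so none is contributed on the left either. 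Part (ii) is essentially free from the same analysis: whenever $e\cap Y$ appears in both $Y$-sections, we have $\sigma(e)=\sigma_X(e\cap X)$, which is exactly what $\sigma_X[Y]$ assigns to $e\cap Y$.

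For (iii), I would examine each $\PP$-crossing hyperedge $e\in E(H)$. If $\pi(e)$ is also $\PP$-crossing, then $\pi(e)\not\sub X'$ for any $X'\in\PP$, so $e$ lies in no section $\inner H\pi{X'}$, hence $\sigma(e)=\pi(e)$, and $\pi/\PP$ and $\sigma/\PP$ give $e/\PP$ the same representation. Otherwise $\pi(e)\sub X'$ for some $X'$ (including $\pi(e)=\emptyset$, which is contained in every class); the preliminary observation gives $\sigma(e)\sub X'$, so neither $\pi(e)/\PP$ nor $\sigma(e)/\PP$ is $\PP$-crossing, and both quasigraphs leave $e/\PP$ unused.

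The one place to be careful is the subcase $\pi(e)=\emptyset$ with $\size{e\cap X'}\geq 2$ for some $X'$: here $\sigma(e)$ may be a fresh $2$-subset of $X'$, but since it lives in a single class of $\PP$ it still fails to cross $\PP$, so (iii) is unaffected. Beyond this bookkeeping, no genuine obstacle is expected --- the lemma is a purely definitional sanity check on the interaction of substitution with sections and contractions.
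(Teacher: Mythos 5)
Your proposal is correct and takes essentially the same route as the paper's proof, which likewise just unwinds the definitions of section, substitution and contraction hyperedge by hyperedge, using the same key point that each substituted representation $\sigma_X(e\cap X)$ lies inside a single class of $\PP$ (so in (i) both membership conditions collapse to $\size{e\cap Y}\geq 2$, $\pi(e)\sub X$ and $\sigma_X(e\cap X)\sub Y$, and in (iii) the middle case of a $\PP$-crossing $\sigma_X$-value never occurs). One tiny imprecision: in your case $\pi(e)\not\sub X$ of (i), $\sigma(e)$ need not equal $\pi(e)$, since $e$ may lie in the section at a different class $X'$; but then $\size{e\cap X'}\geq 2$ forces $\size{e\cap Y}\leq 1$ (your own cardinality observation), so $e$ contributes nothing to either side and the conclusion is unaffected.
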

\begin{proof}
  (i) Using the definition of $\inner H \sigma Y$ and the definition
  of substitution, it is not hard to verify that $e_0 \sub V$ is a
  hyperedge of $\inner H \sigma Y$ if and only if $e_0=e\cap Y$, where
  $e$ is a hyperedge of $H$ such that $\size{e\cap Y} \geq 2$,
  $\pi(e)\sub X$ and $\sigma_X(e\cap X)\sub Y$. If we expand the
  right hand side of the equality in (i) according to these
  definitions, we arrive at precisely the same set of conditions.

  (ii) Both sides of the equation are quasigraphs in $\inner H \sigma
  Y$. We will check that they assign the same value to a hyperedge
  $e\cap Y$ of $\inner H \sigma Y$. For such hyperedges, we have
  \begin{equation}\label{eq:subst1}
    \sigma[Y](e\cap Y) = \sigma(e) = \sigma_X(e\cap X)
  \end{equation}
  where the second equality follows from the definition of
  substitution. On the other hand, by part (i), $e\cap Y$ is a hyperedge of
  $\inner{(\inner H \pi X)}{\sigma_X}Y$, and thus
  \begin{equation}\label{eq:subst2}
    \sigma_X[Y](e\cap Y) = \sigma_X(e\cap X).
  \end{equation}
  The assertion follows by comparing~\eqref{eq:subst1} and
  \eqref{eq:subst2}.

  (iii) Both $\sigma/\PP$ and $\pi/\PP$ are quasigraphs in
  $H/\PP$. Let $e/\PP$ be a hyperedge of $H/\PP$, where $e\in
  E(H)$. Using the definitions of substitution and contraction, one
  can check that
  \begin{equation*}
    (\sigma/\PP)(e/\PP) =
    \begin{cases}
      \pi(e)/\PP & \text{if $e\cap X \notin E(\inner H \pi X)$ and
        $\pi(e)$ is $\PP$-crossing,}\\
      \sigma_X(e)/\PP & \text{if $e\cap X \in E(\inner H \pi X)$ and
        $\sigma_X(e)$ is $\PP$-crossing,}\\
      \emptyset & \text{otherwise.}
    \end{cases}
  \end{equation*}
  However, the middle case can never occur since $\sigma_X(e)\sub X$
  and $\sigma_X(e)$ is therefore not $\PP$-crossing. It follows easily
  that $(\sigma/\PP)(e/\PP) = (\pi/\PP)(e/\PP)$.
\end{proof}

%%%%%%%%%%%%%%%%%%%%%%%%%%%%%%%%%%%%%%%%%%%%%%%%%%%%%%%%%%%%%%%%%%%%%%

\section{The Skeletal Lemma}
\label{sec:skeletal}

In this section, we prove a lemma which is a crucial piece of our
method. It leads directly to an inductive argument for the existence
of a quasitree with tight complement under suitable assumptions, which
will be given in Section~\ref{sec:proof-main}.

If $\pi$ is a quasigraph in $H$, then a partition $\PP$ of $V$ is said
to be \emph{$\pi$-skeletal} if every $X\in\PP$ is $\pi$-solid and the
complement of $\pi/\PP$ in $H/\PP$ is acyclic.

\begin{lemma}[Skeletal Lemma]\label{l:skeletal}
  Let $\pi$ be an acyclic quasigraph in $H$. Then there is an acyclic
  quasigraph $\sigma$ in $H$ such that $\sigma \succeq \pi$ and
  $\sigma$ satisfies one of the following:
  \begin{enumerate}[\quad(a)]
  \item $\sigma \succ \pi$, or
  \item there is a $\sigma$-skeletal partition $\SS$.
  \end{enumerate}
\end{lemma}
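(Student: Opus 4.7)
The plan is to choose $\sigma$ to be a $\preceq$-maximal element of the (finite) set of acyclic quasigraphs $\tau$ in $H$ with $\tau \succeq \pi$; such a maximum exists because there are only finitely many quasigraphs in $H$. If $\sigma \succ \pi$ then conclusion~(a) holds immediately, so I focus on the remaining case $\sigma = \pi$, in which I aim to establish~(b) by exhibiting a $\pi$-skeletal partition of $V$. The natural candidate is $\SS = \PP_\infty^\pi$: by construction of the partition sequence, every class $X \in \SS$ is $\pi$-solid, so the only point left to verify is that $\barr{\pi/\SS}$ is acyclic.

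For this I intend to argue by contradiction. Suppose $\barr{\pi/\SS}$ contains a quasicycle $\gamma$; I will build from $\gamma$ an acyclic $\sigma' \succ \pi$, contradicting maximality. Fix a hyperedge $e$ used by $\gamma$ with $\gamma(e/\SS) = \{Y_1,Y_2\}$ for distinct $Y_1,Y_2 \in \SS$. Then $e$ is $\SS$-crossing while $\pi(e)$ is not, so $\pi(e)$ is either empty or contained in a single class of $\SS$. Pick $y_i \in e \cap Y_i$ and let $\rho$ agree with $\pi$ on every hyperedge other than $e$, with $\rho(e) = \{y_1,y_2\}$; thus $\rho$ uses $e$ in an $\SS$-crossing way, unlike $\pi$.

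The next step is to show $\rho \succ \pi$ by comparing $\PPP^\rho$ and $\PPP^\pi$ term by term. Because $\rho$ and $\pi$ agree outside $e$, one can apply Corollary~\ref{cor:long} to localise the comparison: on every class of the partition sequence missed by $e$ the two quasigraphs coincide, while in the class that meets $e$ the extra inter-$\SS$-class representation ought to force a strict coarsening at the first level where the partitions differ. Once $\rho \succ \pi$ is in hand, I extract an acyclic witness: if $\rho$ is already acyclic, set $\sigma' = \rho$; otherwise, every class of $\PP_\infty^\rho$ is $\rho$-solid (so $\rho[X]$ is acyclic on each of them), and Lemma~\ref{l:break-cycles}, applied with $i$ large enough to reach $\PP_\infty^\rho$, yields an acyclic restriction $\sigma' \succ \rho$. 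Chaining, $\sigma' \succ \pi$, the required contradiction.

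The main obstacle I anticipate is that the single-edge modification may fail to produce a genuine strict improvement---for instance, when $Y_1$ and $Y_2$ already lie in the same class of $\sdpos\pi$, so that $\rho(e)$ merely creates a cycle inside one component without coarsening $\PP_0^\pi$, and once the cycle is broken by Lemma~\ref{l:break-cycles} we risk returning to $\pi$. To overcome this, one may have to replace the single swap by a modification along the whole quasicycle $\gamma$---simultaneously reorienting the representation of each of its hyperedges and deleting a matching edge of $\pi$ inside each traversed $Y_i$ (on the tree path between the two endpoints of the incident cycle edges)---so that the net effect is a genuine coarsening at some level of $\PPP^\pi$. Verifying that some such choice along $\gamma$ always exists is the delicate combinatorial heart of the argument.
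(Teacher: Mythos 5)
There is a genuine gap, and it sits exactly where you place it yourself. Your plan reduces the lemma to the claim that for a $\preceq$-maximal acyclic $\sigma$ the partition $\PP^\sigma_\infty$ is $\sigma$-skeletal, to be proved by turning any quasicycle in $\barr{\sigma/\PP^\sigma_\infty}$ into a strict improvement. Neither half of this is established. Even in the favourable case where the chosen crossing hyperedge $e$ is unused by $\pi$, setting $\rho(e)=\{y_1,y_2\}$ does not obviously give $\rho\succeq\pi$: one must verify $\sdneg\pi\leq\sdneg\rho$, and in the paper this is a genuinely nontrivial step (the proof of Claim~\ref{cl:all-used}) that uses the \emph{whole} quasicycle --- the solid classes $T_1,\dots,T_k$ it traverses are chained together via Lemma~\ref{l:sub}(ii) to show the endvertices of $e$ end up in one negative part --- not just the single edge; your ``ought to force a strict coarsening at the first level where the partitions differ'' is precisely what needs proof. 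In the other case, where $\pi(e)$ is nonempty and lies inside a class $Y$, re-routing $e$ across classes removes $e\cap Y$ from the section $\inner H\pi Y$, can destroy the solidity of $Y$ and can make $\sdneg\rho$ \emph{finer} than $\sdneg\pi$ (more partitions become narrow when a crossing hyperedge acquires a crossing representation), so $\rho\succeq\pi$ may simply fail; you acknowledge this and defer the fix (``modify along the whole quasicycle'') with the explicit admission that verifying it is ``the delicate combinatorial heart'' --- i.e.\ the lemma's actual content is left unproved.

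Moreover, the intended contradiction scheme is likely unworkable in principle, because the paper's own proof shows that in the hard case no improvement of $\pi$ need exist. When every offending crossing hyperedge is \emph{used} by the quasigraph (a ``connector'') and re-optimizing inside each class with the connectors deleted (the hypergraphs $\tilde H_Y$) fails to coarsen the positive parts (Claim~\ref{cl:strictly-conn}), the paper does not produce $\sigma\succ\pi$ at all: it substitutes new quasigraphs $\tau_Y$ inside the classes and passes to a partition $\SS$ \emph{strictly finer} than the classes of $\RR$ (hence than your candidate $\PP_\infty$), chosen so that every connector's representation becomes $\SS$-crossing, and only then verifies skeletality. Conclusion (b) of the lemma deliberately allows a different quasigraph $\sigma\succeq\pi$ and a partition other than $\PP^\sigma_\infty$, and the proof exploits this freedom through an induction on $\size V$ (minimal counterexample) together with the substitution machinery (Lemma~\ref{l:subst}, Lemma~\ref{l:cycle-di}, Corollary~\ref{cor:long}). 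By fixing the candidate partition to $\PP^\sigma_\infty$ and insisting that every quasicycle yield an improvement, your framework forecloses this resolution, and no substitute for it is supplied.
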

\begin{proof}
  We proceed by contradiction. Let the pair $(\pi,H)$ be a
  counterexample such that $H$ has minimal number of vertices; thus,
  no acyclic quasigraph $\sigma \succeq \pi$ in $H$ satisfies any of
  (a) and (b). Note that $\pi$ is not a quasitree with tight
  complement (which includes the case $\size{V} = 1$), for otherwise
  $\sigma = \pi$ would satisfy condition (b) with $\SS=\Setx V$.

  \begin{claim}\label{cl:components}
    $\PP_0^\pi$ is nontrivial.
  \end{claim}
  \begin{claimproof}
    Suppose the contrary and note that $\PP := \sdneg\pi$ is
    nontrivial. Consider a set $Y\in\PP$ and the acyclic quasigraph
    $\pi[Y]$. By the minimality of $H$, there is a quasigraph
    $\sigma_Y \succeq \pi[Y]$ in $\inner H \pi Y$ satisfying condition
    (a) or (b) (with respect to $\pi[Y]$ and $\inner H\pi Y$). Define
    \begin{equation*}
      \sigma = \subst\pi{\Set{\sigma_Y}{Y\in\PP}}.
    \end{equation*}
    By Lemmas~\ref{l:break-cycles} and \ref{l:subst}(ii), we may
    assume that $\sigma$ is acyclic.

    Assume first that for some $Y\in\PP$, $\sigma_Y \succ \pi[Y]$
    (case (a) of the lemma). Since $\sigma[Y] = \sigma_Y$
    (Lemma~\ref{l:subst}(ii)), Lemma~\ref{l:prec} implies that $\sigma
    \succ \pi$, a contradiction with the choice of $\pi$.

    We conclude that case (b) holds for each $Y\in\PP$, namely that
    there exists a partition $\SS_Y$ which is $\sigma_Y$-skeletal in
    $\inner H \pi Y$. Set
    \begin{equation*}
      \SS = \bigcup_{Y\in\PP} \SS_Y.
    \end{equation*}
    We claim that $\SS$ is $\sigma$-skeletal. Let $Z\in\SS$ and assume
    that $Z\sub Y\in\PP$. Since $Z$ is $\sigma_Y$-solid, and since
    $\sigma[Z] = \sigma_Y[Z]$ and $\inner H \sigma Y = \inner{(\inner
      H \pi Y)} {\sigma_Y} Z$ by Lemma~\ref{l:subst}(i)--(ii), $Z$ is
    $\sigma$-solid.

    Suppose that $\barr{\sigma/\SS}$ is not acyclic and choose a
    quasigraph $\gamma$ in $H$ such that $\gamma/\SS$ is a quasicycle
    in $\barr{\sigma/\SS}$. By Lemma~\ref{l:cycle-di}, $\gamma/\PP$ is
    a nonempty quasigraph in the complement $\barr{\pi/\PP}$ of
    $\pi/\PP$ in $H/\PP$. However, by the definition of $\sdneg\pi$,
    every $\PP$-crossing hyperedge of $H$ belongs to $\pi/\PP$ and
    thus cannot be used by $\gamma/\PP$, a contradiction. It follows
    that $\barr{\sigma/\SS}$ is indeed acyclic and $\SS$ is
    $\sigma$-skeletal. This contradiction with the choice of $\pi$
    concludes the proof of the claim.
  \end{claimproof}
  
  For each $X\in\PP_0^\pi$, $\inner H \pi X$ has fewer vertices than
  $H$. By the minimality of $H$, there is an acyclic quasigraph
  $\rho_X \succeq \pi[X]$ in $\inner H \pi X$. Define
  \begin{equation*}
    \rho = \subst \pi {\Set{\rho_X}{X\in\PP_0^\pi}}
  \end{equation*}
  By Lemma~\ref{l:prec}, $\rho \succeq \pi$. Note that since
  $\PP_0^\pi$ is $\pi$-wide, $\rho^*$ is the disjoint union of the
  graphs $\rho_X^*$ ($X\in\PP_0^\pi$). Therefore, $\rho$ is
  acyclic. 

  If $\rho_X\succ\pi[X]$ for some $X\in\PP_0^\pi$, then by
  Lemmas~\ref{l:subst}(ii) and \ref{l:prec}, $\rho \succ \pi$ and we
  have a contradiction.  Consequently, for each $X\in\PP_0^\pi$, there
  is a $\rho_X$-skeletal partition $\RR_X$ (with respect to the
  hypergraph $\inner H \pi X$). We define a partition $\RR$ of $V$ by
  \begin{align}
    \RR = \bigcup_{X\in\PP_0^\pi} \RR_X.
  \end{align}
  Similarly as in the proof of Claim~\ref{cl:components}, each
  $Y\in\RR$ is easily shown to be $\rho$-solid. An important
  difference in the present situation, however, is that $\RR$ may not
  be $\rho$-skeletal as there may be quasicycles in
  $\barr{\rho/\RR}$. Any such quasicycle $\gamma'$ can be represented
  by a quasigraph $\gamma$ in $H$ such that $\gamma' = \gamma/\RR$.

  Thus, let $\gamma$ be a quasigraph in $H$ such that $\gamma/\RR$ is
  a quasicycle in $\barr{\rho/\RR}$. By Lemma~\ref{l:cycle-di}, there
  are two possibilities:
  \begin{enumerate}[\quad(a)]
  \item for some $X\in\PP_0^\pi$, $\gamma[X]/\RR_X$ is a quasicycle in the
    complement of $\rho[X]/\RR_X$ in $\inner H \rho X/\RR_X$, or
  \item $\gamma/\PP_0^\pi$ is a nonempty quasigraph in the complement
    of $\rho/\PP_0^\pi$ in $H/\PP$ such that $(\gamma/\PP_0^\pi)^*$ is
    an eulerian graph.
  \end{enumerate}

  Since $\rho[X] = \rho_X$ (Lemma~\ref{l:subst}(ii)) and $\RR_X$ is
  $\rho_X$-skeletal, case (a) is ruled out. Thus, we can choose a
  hyperedge $f_\gamma$ of $H$ such that $\gamma(f_\gamma)$ is
  $\PP_0^\pi$-crossing. As $\gamma/\RR$ is a quasicycle in
  $\barr{\rho/\RR}$, $\rho(f_\gamma)$ is contained in a class of
  $\RR$. If $f_\gamma$ is used by $\rho$, then this class will be
  denoted by $Y_\gamma$ and we will say that the chosen hyperedge
  $f_\gamma$ is a \emph{connector for $Y_\gamma$}.

  \begin{claim}\label{cl:all-used}
    For each quasicycle $\gamma/\RR$ in $\barr{\rho/\RR}$, the
    hyperedge $f_\gamma$ is used by $\rho$.
  \end{claim}
  \begin{claimproof}
    Suppose to the contrary that $\gamma(f_\gamma) = u_1u_2$, where
    each $u_i$ ($i=1,2$) is contained in a different class $X_i$ of
    $\PP_0^\pi$. By Lemma~\ref{l:improve} and
    Observation~\ref{obs:similar}(ii), $\PP_0^\pi = \PP_0^\rho$. Let
    $\sigma$ be the quasigraph in $H$ defined by
    \begin{equation*}
      \sigma(e) =   
      \begin{cases}
        \pi(e) & \text{if $e\neq f_\gamma$,}\\
        u_1u_2 & \text{otherwise.}
      \end{cases}
    \end{equation*}
    (See Figure~\ref{fig:skeletal1}.) Considering the role of the
    hyperedge $e$, we see that
    \begin{equation}
      \label{eq:strictly}
      \PP_0^\rho < \PP_0^\sigma.
    \end{equation}

    \begin{figure}
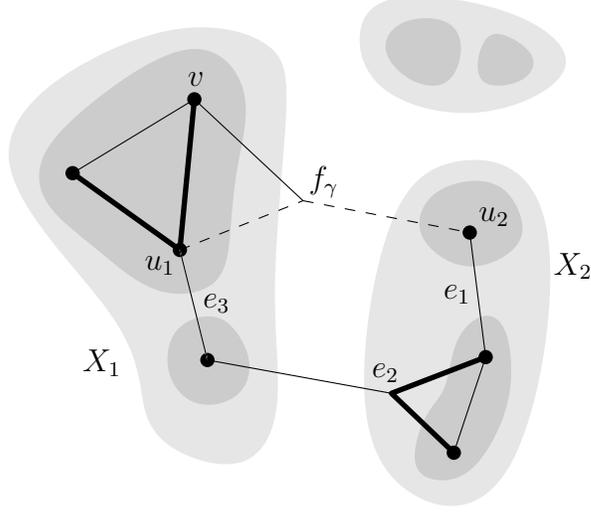

      \centering
      \hfilll
      \fig{18}
      \hfilll
      \caption{An illustration to the proof of
        Claim~\ref{cl:all-used}. Some hyperedges are omitted. The
        light gray regions are the classes of $\PP_0^\pi$, the darker
        ones are the classes of $\RR$. Bold lines indicate the
        quasigraph $\rho$. The set $\Setx{f_\gamma,e_1,e_2,e_3}$
        corresponds to a quasicycle $\gamma$ in $H/\RR$. The
        quasigraph $\sigma$ is obtained by including $f_\gamma$ in
        $E(\rho)$, with the representation given by dashed lines. Note
        that $v$ is contained in the same negative $\sigma$-part as
        $u_1$.}
      \label{fig:skeletal1}
    \end{figure}
  
    Next, we would like to prove that 
    \begin{equation}
      \label{eq:sdneg-same}
      \sdneg\rho\leq \sdneg\sigma.
    \end{equation}
    First of all, we claim that $u_1$ and $u_2$ are contained in the
    same class of $\sdneg\sigma$. Let the vertices on the unique cycle
    in $(\gamma/\RR)^*$ be $T_1,\dots,T_k$ in this order, where each
    $T_i$ is a class of $\RR$, $u_1\in T_1$ and $u_2\in T_k$. By
    symmetry, we may assume that $\size{f_\gamma\cap T_k} = 1$ (i.e.,
    $T_1$ is the only class of $\RR$ which may contain more than one
    vertex of $f_\gamma$).

    By Lemma~\ref{l:subst}(i)--(ii), together with the fact that each
    $Y\in\RR$ is $\rho_X$-solid (where $Y\sub X\sub\PP_0^\pi$), each
    $T_i$ ($i=1,\dots,k$) is $\rho$-solid. Thus, $T_i$ is also
    $\sigma$-solid for $i\geq 2$. Let $T'_1$ be the negative
    $\sigma[T_1]$-part of $\inner H \sigma {T_1}$ containing $u_1$.

    For $i=1,\dots,k-1$, let $e_i$ be the hyperedge of $E(\gamma)$
    such that $\gamma(e_i)/\RR = T_iT_{i+1}$ (choosing $e_1\neq
    f_\gamma$ if $k=2$). Let $T = T'_1\cup T_2\cup\dots\cup
    T_k$. Using the minimality of $H$ and Lemma~\ref{l:sub}(ii), it is
    easy to prove that $T$ is a subset of a class, say $Q$, of
    $\sdneg\sigma$. Note that $Q$ contains $u_1$ and $u_2$ as claimed.

    If \eqref{eq:sdneg-same} is false, then the unique vertex of
    $f_\gamma-\Setx{u_1,u_2}$ is necessarily contained in a class of
    $\sdneg\sigma$ distinct from $Q$. In that case, however,
    $\sdneg\sigma$ is not $\sigma$-narrow as $\sigma(f_\gamma) \sub
    Q$. This contradiction with the definition proves
    \eqref{eq:sdneg-same}.

    By \eqref{eq:strictly} and \eqref{eq:sdneg-same}, $\pi \preceq
    \rho \prec \sigma$. Moreover, $\sigma$ is acyclic, since $\rho$ is
    acyclic and $\sigma(f_\gamma)$ has endvertices in distinct
    components of $\rho^*$. Thus, $\sigma$ satisfies condition (a) in
    the statement of the lemma, contradicting the choice of $\pi$.
  \end{claimproof}

  For any $Y\in\RR$, let $\conn Y$ be the set of all connectors for
  $Y$, and write
  \begin{equation*}
    \iconn Y = \Set{f\cap Y}{f\in\conn Y}.
  \end{equation*}
  Note that for any connector $f$ for $Y$, $f\cap Y$ is a 2-hyperedge
  of $\rho[Y]$.

  Let us describe our strategy in the next step in intuitive terms. We
  want to modify $\rho$ within the classes of $\RR$ and `free' one of
  the hyperedges $f_\gamma$ from $\rho$, which would enable us to
  apply the argument from the proof of Claim~\ref{cl:all-used} and
  reach a contradiction. If no such modification works, we obtain a
  quasigraph $\sigma$ and a partition $\SS$ which refines $\RR$. The
  effect of the refinement is to `destroy' all quasicycles
  $\gamma/\RR$ in $\barr{\rho/\RR}$ by making the representation
  $\rho(f_\gamma)$ of each associated connector $f_\gamma$
  $\SS$-crossing. Thanks to this, it will turn out that $\SS$ is
  $\sigma$-skeletal as required to satisfy condition (b).
  
  Thus, let $Y\in\RR$ and set
  \begin{align*}
    \tilde H_Y &= \inner H \rho Y - \iconn Y,\\
    \tilde\rho_Y &= \rho[Y] - \iconn Y
  \end{align*}
  (we allow $\iconn Y = \emptyset$) and observe that $\tilde\rho_Y$ is
  an acyclic quasigraph in $\tilde H_Y$. Let $\sigma_Y$ be a
  $\preceq$-maximal acyclic quasigraph in $\tilde H_Y$ such that
  $\sigma_Y \succeq \tilde\rho_Y$. We define a quasigraph $\tau_Y$
  in $\inner H \rho Y$ by
  \begin{equation*}
    \tau_Y(e) =
    \begin{cases}
      e & \text{if $e\in\iconn Y$,}\\
      \sigma_Y(e) & \text{otherwise.}
    \end{cases}
  \end{equation*}

  \begin{claim}
    \label{cl:strictly-conn}
    For all $Y\in\RR$,
  \begin{equation*}
    \sdposs{\sigma_Y}{\tilde H_Y} = \sdposs{\tilde\rho_Y}{\tilde H_Y}.
  \end{equation*}
  \end{claim}
  \begin{claimproof}
    From $\sigma_Y \succeq \tilde\rho_Y$, we know that the left
    hand side in the statement of the claim is coarser than (or equal
    to) the right hand side. Suppose that for some $Y\in\RR$,
    $\sdposs{\sigma_Y}{\tilde H_Y}$ is strictly coarser than
    $\sdposs{\tilde\rho_Y}{\tilde H_Y}$. Then we can choose vertices
    $u_1,u_2\in Y$ which are contained in different classes $U_1,U_2$,
    respectively, of $\sdposs{\tilde\rho_Y}{\tilde H_Y}$, but in the
    same class $U$ of $\sdposs{\sigma_Y}{\tilde H_Y}$. Since $Y$
    is $\rho$-solid, the graph $\rho[Y]^*$ contains a path $P$ joining
    $u_1$ to $u_2$. The choice of $u_1$ and $u_2$ implies the
    following:
    \begin{enumerate}
    \item[\quad(A1)] $P$ contains the edge $f_\gamma\cap Y \in \iconn
      Y$ for some quasicycle $\gamma$, and
    \item[\quad(A2)] all the edges of $E(P) \cap \iconn Y$ are
      contained in a cycle in $(\subst\rho{\sigma_Y})^*$.
    \end{enumerate}
    We choose a quasicycle $\gamma$ satisfying (A1) and let $\tau$
    be the quasigraph in $H$ obtained as
    \begin{equation*}
      \tau = (\subst\rho{\tau_Y}) - f_\gamma\cap Y.
    \end{equation*}

    \begin{figure}
      \centering \sfig{19}{Bold lines show the quasigraph
        $\rho$, the dotted regions are the positive
        $\tilde\rho_Y$-parts of $\tilde H_Y$.}\hfilll
      \sfig{20}{The dotted regions here are the positive
        $\sigma_Y$-parts of $\tilde H_Y$. If the partition is strictly
        coarser than in (a), we can `free' a suitable
        connector $f_\gamma$ and use it as before.}\\
      \hfilll\sfig{21}{Otherwise, we obtain a finer partition $\SS$
        (darkest grey regions) such that $\rho(f_\gamma)$ is
        $\SS$-crossing for each $\gamma$.}\hfilll
      \caption{An illustration to the proof of
        Claim~\ref{cl:strictly-conn} and the following part of the
        proof. We use similar conventions as in
        Figure~\ref{fig:skeletal1}.}
      \label{fig:skeletal2}
    \end{figure}

    By (A2) and the fact that $\rho[Y]$ is connected, $\tau[Y]$ is
    connected as well. Since $\sigma_Y$ has tight complement in
    $\tilde H_Y$, $\tau[Y]$ has tight complement in $\inner H \rho Y$
    (the two complements coincide). Thus, $Y$ is $\tau$-solid. By
    Corollary~\ref{cor:long}, $\tau\succeq\rho$. By
    Lemma~\ref{l:break-cycles} and the fact that $\rho\succeq\pi$, we
    may assume that $\tau$ is acyclic.

    Since $\rho$ and $\tau$ are $Y$-similar, we have
    \begin{equation*}
      \barr{\rho/\RR} = \barr{\tau/\RR}.
    \end{equation*}
    In particular, the quasicycle $\gamma$ in $\barr{\rho/\RR}$
    (associated with $f_\gamma$) is also a quasicycle in
    $\barr{\tau/\RR}$. As $f_\gamma$ is not used by $\tau$ (and
    $\tau \succeq \rho$), we can repeat the argument used in the
    proof of Claim~\ref{cl:all-used}, namely add $f_\gamma$ (with a
    suitable representation) to $\tau$ and reach a contradiction
    with the choice of $\pi$.
  \end{claimproof}

  We will now construct a $\sigma$-skeletal partition of $V$. Let
  $Y\in\RR$. By the choice of $H$ and the maximality of $\sigma_Y$,
  there is a $\sigma_Y$-skeletal partition $\SS_Y$ of $Y$ (in $\tilde
  H_Y$). We define a quasigraph $\sigma$ in $H$ and a partition $\SS$
  of $V$ by
  \begin{align*}
    \sigma &= \subst \rho {\Set{\tau_Y}{Y\in\RR}},\\
    \SS &= \bigcup_{Y\in\RR} \SS_Y.
  \end{align*}

  We aim to show that $\SS$ is $\sigma$-skeletal. Let $Z\in\SS$ and
  suppose that $Z \sub Y \sub X$, where $X\in\PP_0^\pi$ and $Y\in
  \RR$. Since $\sigma[Z] = \sigma_Y[Z]$ and $\SS_Y$ is
  $\sigma_Y$-skeletal, $\sigma[Z]$ is a quasitree. 

  To show that the complement of $\sigma[Z]$ in $\inner H \sigma Z$ is
  tight, we use Lemma~\ref{l:subst}(i):
  \begin{equation}\label{eq:innerZ}
    \inner H \sigma Z = \inner {(\inner H \rho Y)} {\tau_Y} Z
    = \inner{\tilde H_Y}{\tau_Y} Z
    = \inner{\tilde H_Y}{\sigma_Y} Z.
  \end{equation}
  Here, the second and the third equality follows from
  Claim~\ref{cl:strictly-conn} which implies that any connector for
  $Y$ intersects two classes of $\sdposs{\sigma_Y}{\tilde
    H_Y}$. From~\eqref{eq:innerZ} and the fact that $\sigma_Y[Z]$ has
  tight complement in $\inner{\tilde H_Y}{\sigma_Y} Z$, it follows
  that $\sigma[Z]$ has tight complement as well.

  It remains to prove that $\barr{\sigma/\SS}$ is acyclic. Suppose,
  for the sake of a contradiction, that $\gamma$ is a quasigraph in
  $H$ such that $\gamma/\SS$ is a quasicycle in $\barr{\sigma/\SS}$.
  Note that the complement of $\tau_Y/\SS_Y$ in $\inner H \rho Y$
  is the same as the complement of $\sigma_Y/\SS_Y$ in $\tilde H_Y$,
  and hence acyclic. By Lemma~\ref{l:cycle-di}, $\gamma/\RR$ is a
  nonempty quasigraph in $\barr{\rho/\RR}$ with $(\gamma/\RR)^*$
  eulerian. 

  Let $\delta$ be a restriction of $\gamma$ such that $\delta/\RR$ is
  a quasicycle in $\barr{\rho/\RR}$. Every such quasicycle has an
  associated hyperedge $f_{\delta}$ which is a connector for a class
  $Y_\delta\in\RR$ (Claim~\ref{cl:all-used}). In particular,
  $f_\delta$ is used by $\rho$. By the fact that $f_\delta$ intersects
  two classes of $\sdposs{\sigma_{Y_\delta}}{\tilde H_{Y_\delta}}$,
  $\rho(f_\delta)$ is $\SS$-crossing. This implies that
  $\sigma(f_\delta)$ is $\SS$-crossing, which contradicts the
  assumption that $\gamma/\SS$ is a quasicycle in
  $\barr{\sigma/\SS}$. The proof is complete.
\end{proof}

%%%%%%%%%%%%%%%%%%%%%%%%%%%%%%%%%%%%%%%%%%%%%%%%%%%%%%%%%%%%%%%%%%%%%%

\section{Proof of Theorem~\ref{t:spanning-tree}}
\label{sec:proof-main}

We can now prove our main result regarding spanning trees in
hypergraphs, announced in Section~\ref{sec:quasi} as
Theorem~\ref{t:spanning-tree}:

\newtheorem*{thm-tree}{Theorem}
\begin{thm-tree}
  Let $H$ be a 4-edge-connected 3-hypergraph. If no 3-hyperedge of $H$
  is included in any edge-cut of size 4, then $H$ contains a quasitree
  with tight complement.
\end{thm-tree}
\begin{proof}
  Let $\pi$ be a $\preceq$-maximal acyclic quasigraph in $H$. By the
  Skeletal Lemma (Lemma~\ref{l:skeletal}), there exists a
  $\pi$-skeletal partition $\PP$ of $V$. For the sake of a
  contradiction, suppose that $\pi$ is not a quasitree with tight
  complement. In particular, $\PP$ is nontrivial.

  Assume that $H/\PP$ has $n$ vertices (that is, $\size\PP = n$) and
  $m$ hyperedges. For $k\in\Setx{2,3}$, let $m_k$ be the number of
  $k$-hyperedges of $\pi/\PP$. Similarly, let $\barr{m_k}$ be the
  number of $k$-hyperedges of $\barr{\pi/\PP}$. Thus, $m = m_2 + m_3 +
  \barr{m_2} + \barr{m_3}$.

  Since $\barr{\pi/\PP}$ is acyclic, the graph $\gr{\barr{\pi/\PP}}$
  (defined in Section~\ref{sec:prelim}) is a forest. As
  $\gr{\barr{\pi/\PP}}$ has $n+\barr{m_3}$ vertices and
  $\barr{m_2}+3\barr{m_3}$ edges, we find that
  \begin{equation}
    \label{eq:compl}
    \barr{m_2} + 2\barr{m_3} \leq n-1.
  \end{equation}
  Since $\PP$ is $\pi$-solid and $\pi$ is an acyclic quasigraph, we
  know that $m_2 + m_3 \leq n-1$. Moreover, by the assumption that
  $\pi$ is not a quasitree with a tight complement, either this
  inequality or \eqref{eq:compl} is strict. Summing the two, we obtain
  \begin{equation}
    \label{eq:both}
    m + \barr{m_3} \leq 2n-3.
  \end{equation}
    
  We let $n_4$ be the number of vertices of $H/\PP$ of degree 4, and
  $n_{5^+}$ be the number of the other vertices. Since $n \geq 2$ and
  $H$ is 4-edge-connected, we have $n = n_4 + n_{5^+}$. By double
  counting,
  \begin{equation}
    \label{eq:verts}
    4n_4 + 5n_{5^+} \leq 2(m_2+\barr{m_2}) + 3(m_3+\barr{m_3}) = 2m +
    m_3 + \barr{m_3}.
  \end{equation}
  The left hand side equals $4n + n_{5^+}$. Using~\eqref{eq:both},
  we find that 
  \begin{equation*}
    4n + n_{5^+} \geq 2m + 2\barr{m_3} + n_{5^+} + 6.
  \end{equation*}
  Combining with~\eqref{eq:verts}, we obtain
  \begin{equation}\label{eq:star}
    m_3 \geq \barr{m_3} + n_{5^+} + 6.
  \end{equation}

  We show that $m_3 \leq n_{5^+}$. Let $T' = (\pi/\PP)^*$ be the
  forest on $\PP$ which represents $\pi/\PP$. In each component of
  $T'$, choose a root and direct the edges of $T'$ away from it. To
  each 3-hyperedge $e\in E(\pi/\PP)$, assign the head $h(e)$ of the
  arc $\pi(e)$. By the assuptions of the theorem, no edge-cut of size
  4 contains a 3-hyperedge, so $h(e)$ is a vertex of degree at least
  5. At the same time, since each vertex is the head of at most one
  arc in the directed forest, it gets assigned to at most one
  hyperedge. The inequality $m_3 \leq n_{5^+}$ follows.  This
  contradiction to inequality~\eqref{eq:star} proves that $\pi$ is
  a quasitree with tight complement.
\end{proof}

%%%%%%%%%%%%%%%%%%%%%%%%%%%%%%%%%%%%%%%%%%%%%%%%%%%%%%%%%%%%%%%%%%%%%%

\section{Even quasitrees}
\label{sec:even}

In the preceding sections, we were busy looking for quasitrees with
tight complement in hypergraphs. In this and the following section, we
will explain the significance of such quasitrees for the task of
finding a Hamilton cycle in the line graph of a given graph. 

Let $\pi$ be a quasitree in $H$. For a set $X\sub V$, we define a
number $\Phi_\pi(X) \in \Setx{0,1}$ by
\begin{equation*}
  \Phi_\pi(X) \equiv \sum_{v\in X} d_{\pi^*}(v) \pmod 2.
\end{equation*}
Observe that $\Phi_\pi(X) = 0$ if and only if $X$ contains an even
number of vertices whose degree in the tree $\pi^*$ is odd.

For $X\sub V$, we say that $\pi$ is \emph{even on $X$} if for every
component $K$ of $\barr\pi$ whose vertex set is a subset of $X$, it
holds that $\Phi_\pi(V(K)) = 0$. If $\pi$ is even on $V$, then we just
say $\pi$ is \emph{even}.

The main result of this section is the following:
\begin{lemma}\label{l:ext}
  If $\pi$ is a quasitree in $H$ with tight complement, then there is
  a quasigraph $\rho$ in $H$ such that $E(\rho) = E(\pi)$ and $\rho$
  is an even quasitree in $H$.
\end{lemma}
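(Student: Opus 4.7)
My plan is to induct on $|V|$ using the recursive definition of tight complement. The base case $|V|=1$ is trivial. In case (a) of the definition, $\barr\pi$ has a single component $K=V$, so
\[
\Phi_\pi(V)\equiv\sum_{v\in V}d_{\pi^*}(v)=2|E(\pi^*)|\equiv 0\pmod 2
\]
and $\rho=\pi$ itself is even.

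The real work is in case (b): $V=X_1\cup X_2$ with each $\pi[X_i]$ having tight complement in $\inner H \pi {X_i}$, and a connector $e_0=\{a,b,c\}\in E(\pi)$ with $a,b\in X_1$, $c\in X_2$, and $\pi(e_0)=\{a,b\}$. The strategy is to recurse on $\pi[X_1]$ and $\pi[X_2]$ and combine via substitution. Since $\pi[X_i]$ is in general a quasigraph, not a quasitree, I would prove a strengthened statement by induction: for any quasigraph $\sigma$ in a hypergraph $H'$ with tight complement and any target assignment $\phi$ on the components of $\barr\sigma$ with $\sum_K\phi(K)\equiv 0\pmod 2$, there exists $\rho$ with $E(\rho)=E(\sigma)$, the same vertex-component partition of $\rho^*$ as of $\sigma^*$, and $\sum_{v\in V(K)}d_{\rho^*}(v)\equiv\phi(K)\pmod 2$ for every $\barr\sigma$-component $K$. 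The original lemma corresponds to $\phi\equiv 0$: since $\pi^*$ is connected and $E(\rho)=E(\pi)$ has $|V|-1$ edges, $\rho^*$ is automatically a tree.

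For the inductive step in case (b), let $\beta(K)$ denote the parity of the number of $\{X_1,X_2\}$-crossing $\pi^*$-edges incident to $V(K)$, and set $\gamma(K)=\phi(K)+\beta(K)$; then $\sum_K\gamma(K)\equiv 0$ since each crossing edge contributes to two components. The decomposition
\[
\sum_{v\in V(K)}d_{\rho^*}(v)\equiv\sum_{v\in V(K)\cap X_1}d_{\rho_1^*}(v)+\sum_{v\in V(K)\cap X_2}d_{\rho_2^*}(v)+\beta(K)\pmod 2,
\]
valid for $\rho=\subst\pi{\{\rho_1,\rho_2\}}$, reduces the parity task to choosing targets $\phi_i$ on the $\barr{\pi[X_i]}$-components so that on each $K_j$ the $\phi_i$-values assigned to the sub-components of $V(K_j)\cap X_i$ sum to $\gamma(K_j)$, while each $\phi_i$ has total sum zero. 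Straddling $\barr\pi$-components can be split freely between the two sides; non-straddling ones are forced. The strengthened IH applied to each $\pi[X_i]$ with target $\phi_i$ then yields $\rho_1,\rho_2$, and substitution gives the desired $\rho$.

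The main obstacle is the degenerate configuration in which no $\barr\pi$-component straddles $X_1,X_2$: then the per-side sums of $\gamma$ are forced by the non-straddling distribution, and they may be individually nonzero (though summing to zero overall), so no valid splitting $(\phi_1,\phi_2)$ exists. The connector $e_0$ is exactly what fixes this. Swapping $\pi(e_0)=\{a,b\}$ to the unique tree-compatible alternative in $\{\{a,c\},\{b,c\}\}$—determined by which component of $\pi^*-ab$ contains $c$—turns $e_0$ from a within-$X_1$ edge of $\pi^*$ into a crossing edge, flipping $\beta(\kappa(c))$ together with one of $\beta(\kappa(a)),\beta(\kappa(b))$ and hence shifting the per-side sums of $\gamma$ by $1$ on each side. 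This single degree of freedom is precisely what is needed to restore consistency, after which the recursion goes through. Checking that the toggled quasigraph still satisfies tight complement and that the $e_0$-toggle supplies the needed parity correction is the most delicate part of the argument.
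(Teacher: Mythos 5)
Your overall strategy---inducting on the recursive structure of tight complement and using the connector hyperedge, re-represented in the unique tree-compatible way, as the parity-correcting device---is exactly the idea behind the paper's proof (its Lemma~\ref{l:ext1}). The gap is that the strengthened induction hypothesis you propose is false. Counterexample: let $H'$ have vertices $u,v,w$ and hyperedges $e_1=\Setx{u,v,w}$ and $e_2=\Setx{u,v}$, and let $\sigma(e_1)=\Setx{u,v}$, $\sigma(e_2)=\emptyset$. Then $\sigma$ has tight complement (take $X_1=\Setx{u,v}$, $X_2=\Setx{w}$, with $e_1$ as connector), and $\barr\sigma$ has two components, with vertex sets $\Setx{u,v}$ and $\Setx{w}$. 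The targets $\phi(\Setx{u,v})=\phi(\Setx{w})=1$ have even sum, yet any $\rho$ with $E(\rho)=\Setx{e_1}$ and the same component partition of $\rho^*$ as of $\sigma^*$ must have $\rho(e_1)=\Setx{u,v}$, so $w$ keeps even degree and $\phi$ is not realizable. Your recursion can generate precisely such unachievable targets: in the case with a straddling $\barr\pi$-component you split its parity ``freely'' between the two sides, which may place a target $1$ on a piece with no representational freedom, and the induction collapses one level down.

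The reason the paper never needs per-component targets is that the decomposition can be chosen better: if $\pi[X]$ has tight complement and $\barr{\pi[X]}$ is disconnected, one may take $X_1$ and $X_2$ to be unions of vertex sets of components of the complement (each such component already induces a tight complement, and they can be merged two at a time via the argument of Lemma~\ref{l:sub}(ii) until two classes remain, with tightness providing the required used crossing hyperedge). With this choice, every hyperedge meeting both $X_1$ and $X_2$ is used by $\pi$ --- condition (B3) in the paper's proof of Lemma~\ref{l:ext1} --- so no component of $\barr\pi$ inside $X$ straddles the cut, all per-component targets are simply $0$, and the only bookkeeping left is the single parity $\Phi_\pi(X_1)$; when it equals $1$, the switch of the connector's representation (chosen, as you describe, according to which component of $\pi^*$ minus the old edge contains the third vertex) fixes it. The paper also phrases the induction in terms of $X$-similar modifications of the one global quasitree rather than substitution of induced quasigraphs, which keeps $\rho^*$ a tree automatically and avoids the delicate re-examination of tightness after the toggle that your setup would require. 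So the toggle idea is the right one, but the arbitrary-target induction hypothesis must be abandoned in favour of choosing $X_1,X_2$ along components of the complement.
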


Lemma~\ref{l:ext} is a direct consequence of the following more
technical statement (to derive Lemma~\ref{l:ext}, set $X=V$):

\begin{lemma}
  \label{l:ext1}
  Let $\pi$ be a quasitree in $H$ and $X\sub V$. Assume that
  $\Phi_\pi(X) = 0$ and $\pi$ has tight complement in $\inner H \pi
  X$. Then there is a quasitree $\rho$ in $H$ such that $\pi$ and
  $\rho$ are $X$-similar, and $\rho$ is even on $X$.
\end{lemma}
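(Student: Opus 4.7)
The plan is to prove Lemma~\ref{l:ext1} by induction on $|X|$. The base case $|X|\le 1$ is immediate from the hypothesis $\Phi_\pi(X)=0$: if $X=\{v\}$, then $v$ has even degree in $\pi^*$, which handles the only potential component of $\barr\pi$ contained in $X$ (the singleton $\{v\}$); take $\rho=\pi$.

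For the inductive step, I examine the two cases in the definition of tight complement for $\pi[X]$ in $\inner H\pi X$. In Case (a), where $\barr{\pi[X]}$ is connected in $\inner H\pi X$, the choice $\rho=\pi$ already works: the connectivity forces all vertices of $X$ into a single component $K_X$ of $\barr\pi$ in $H$ (since the hyperedges of $\barr{\pi[X]}$ arise from $\barr\pi$-hyperedges with $|f\cap X|\ge 2$), so either $V(K_X)=X$ (and $\Phi_\pi(X)=0$ gives the conclusion) or $V(K_X)\supsetneq X$ (no component of $\barr\pi$ is contained in $X$, condition vacuous).

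In Case (b), $X=X_1\cup X_2$ with each $\pi[X_i]$ having tight complement and a bridging edge $e\in E(\pi)$ with $\pi(e)\sub X_1$, $e\cap X_2\neq\emptyset$. Size considerations force $e$ to be a $3$-hyperedge, say $e=\{a,b,v\}$ with $\pi(e)=\{a,b\}\sub X_1$ and $v\in X_2$. The identity $\Phi_\pi(X_1)+\Phi_\pi(X_2)\equiv\Phi_\pi(X)=0\pmod 2$ gives two sub-cases. If both $\Phi_\pi(X_i)=1$, first modify $\pi$ to $\pi'$ by setting $\pi'(e)$ to $\{a,v\}$ or $\{b,v\}$ -- whichever preserves the tree, as dictated by which of the two components of $\pi^*-\{a,b\}$ contains $v$; the swap flips the parities of one of $a,b$ and of $v$, yielding $\Phi_{\pi'}(X_i)=0$ and $\pi'\sim_X\pi$, thereby reducing to the both-$0$ sub-case. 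If both $\Phi_\pi(X_i)=0$, apply the inductive hypothesis to $(\pi,X_1)$ to obtain a quasitree $\rho_1$ that is $X_1$-similar to $\pi$ and even on $X_1$. A short check -- using $|f'\cap X_1|+|f'\cap X_2|\le|f'|\le 3$, so no hyperedge has two vertices in each of $X_1, X_2$ -- shows $\inner H{\rho_1}{X_2}=\inner H\pi{X_2}$, $\rho_1[X_2]=\pi[X_2]$, and $\Phi_{\rho_1}(X_2)=0$. Applying induction to $(\rho_1,X_2)$ then yields $\rho=\rho_2$ that is $X_2$-similar to $\rho_1$ and even on $X_2$; by transitivity, $\rho\sim_X\pi$. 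I also carry along the strengthening $E(\rho)=E(\pi)$ throughout, so $\barr\rho=\barr\pi$ as hyperedge sets and hence have identical components.

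To verify $\rho$ is even on $X$, let $K$ be a component of $\barr\pi=\barr\rho$ with $V(K)\sub X$, and decompose $V(K)=K_1\cup K_2$ with $K_i=V(K)\cap X_i$. Since the passage $\rho_1\to\rho$ only alters representations contained in $X_2$, the degrees of $\rho^*$ and $\rho_1^*$ agree on $X_1$-vertices, giving $\Phi_\rho(K_1)=\Phi_{\rho_1}(K_1)$. If $V(K)\sub X_1$ or $V(K)\sub X_2$, the inductive evennesses of $\rho_1$ and $\rho$ on $X_1$ and $X_2$ respectively give $\Phi_\rho(V(K))=0$ immediately. The main obstacle I expect is the straddling case where both $K_1, K_2$ are nonempty: since $V(K)\not\sub X_i$ for either $i$, the inductive evennesses do not apply to $V(K)$ as a whole, and one must establish $\Phi_\rho(V(K))=\Phi_{\rho_1}(K_1)+\Phi_\rho(K_2)\equiv 0\pmod 2$ via a direct parity-counting argument on $\rho^*$-edges leaving $K_1$ and $K_2$, with cross-$X_1/X_2$ edge contributions cancelling in pairs and the remaining contributions controlled by the tight-complement structure together with $X$-similarity (which keeps $\rho^*$-edges from $V(K)$ to $V\setminus X$ equal to the corresponding $\pi^*$-edges). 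A secondary difficulty is verifying that $\pi'[X]$ retains tight complement after the swap in the both-$1$ sub-case, which may require exhibiting an alternative Case (b) decomposition once the original bridging edge $e$ is invalidated by its new straddling representation.
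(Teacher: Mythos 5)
Your induction skeleton is the paper's (base case, the connected-complement case (a), the parity switch on the bridging 3-hyperedge, and two nested inductive calls on $X_1$ and $X_2$), but the point you yourself flag as ``the main obstacle'' is a genuine gap, not a routine verification. If you take $X_1\cup X_2$ straight from case (b) of the definition of tight complement, a component $K$ of $\barr\pi$ with $V(K)\sub X$ may meet both $X_1$ and $X_2$, and then nothing you have controls $\Phi_\rho(V(K))$: the inductive hypotheses only say something about whole components of $\barr\pi$ contained in $X_1$ or in $X_2$, not about the pieces $V(K)\cap X_i$, and neither $X$-similarity nor tightness supplies the ``cancellation in pairs'' you appeal to. The paper closes exactly this hole by choosing the bipartition more carefully: it arranges (its condition (B3)) that every hyperedge of $H$ meeting both $X_1$ and $X_2$ lies in $E(\pi)$, equivalently that $X_1$ and $X_2$ are unions of vertex sets of components of $\barr{\pi[X]}$. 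With that choice no component of $\barr\pi$ contained in $X$ straddles the bipartition, and evenness on $X$ follows at once from the two inductive calls. Such a bipartition does exist, and this is the idea missing from your write-up: each component vertex set $C$ of $\barr{\pi[X]}$ has connected complement in $\inner H \pi C$, hence tight complement; since $\pi[X]$ has tight complement, Lemma~\ref{l:tight-narrow} says the partition of $X$ into these components (and any coarsening of it) is not $\pi[X]$-narrow, so there is always a used hyperedge whose representation lies in one class and which meets another, and case (b) of the definition lets you merge those two classes while keeping tight complement; iterating until two classes remain produces $X_1,X_2$ together with the bridging hyperedge, and by construction no hyperedge of $\barr\pi$ crosses them.

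Your ``secondary difficulty'' about the switch, by contrast, is harmless but also left undone: you do not need $\pi'[X]$ to retain tight complement, only $\pi'[X_1]$ and $\pi'[X_2]$ in their sections, since from that point on you work inside $X_1$ and $X_2$. Writing $e=\Setx{a,b,v}$ with $\pi(e)=\Setx{a,b}\sub X_1$ and $v\in X_2$, the section at $X_2$ is unchanged (as $\size{e\cap X_2}=1$), and the section at $X_1$ merely loses the used 2-hyperedge $\Setx{a,b}$, whose representation equals itself, so the narrow partitions of $X_1$ --- and hence tightness --- are unaffected. So the switch subcase can be repaired in a line or two; the straddling-component issue cannot, without the refined choice of $X_1,X_2$ described above.
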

\begin{proof}
  We proceed by induction on $\size X$. We may assume that $\size X
  \geq 2$, since otherwise the claim is trivially true. Similarly, if
  $\barr\pi[X]$ is connected, then the assumption $\Phi_\pi(X) = 0$
  implies that $\pi$ is even on $X$. Thus, we assume that
  $\barr\pi[X]$ is disconnected.

  The definition implies that there is a partition $X = X_1\cup X_2$
  such that:
  \begin{enumerate}[\quad(B1)]
  \item for each $i=1,2$, $\pi[X_i]$ has tight complement in $\inner H
    \pi {X_i}$, 
  \item there is a hyperedge $e$ intersecting $X_2$ with $\pi(e) \sub
    X_1$, and
  \item for any hyperedge $f$ intersecting both $X_1$ and $X_2$, we
    have $f\in E(\pi)$.
  \end{enumerate}
  
  If $\Phi_\pi(X_1) = 0$, then we may use the induction hypothesis
  with $X_1$ playing the role of $X$. The result is a quasitree
  $\rho_1$ in $H$ which is even on $X_1$ and $X_1$-similar to
  $\rho$. In particular, $\Phi_{\rho_1}(X_1) = 0$ and hence also
  $\Phi_{\rho_1}(X_2) = 0$. Using the induction hypothesis for $X_2$,
  we obtain a quasitree $\rho_2$ in $H$ which is even on $X_2$;
  furthermore, being $X_2$-similar to $\rho_1$, it is even on $X_1$ as
  well. By (B3), the vertex set of every component $K$ of $\barr\pi$
  with $V(K)\sub X$ is a subset of $X_1$ or $X_2$. Thus, $\rho :=
  \rho_2$ is even on $X$, and clearly $X$-similar to $\pi$.

  \begin{figure}
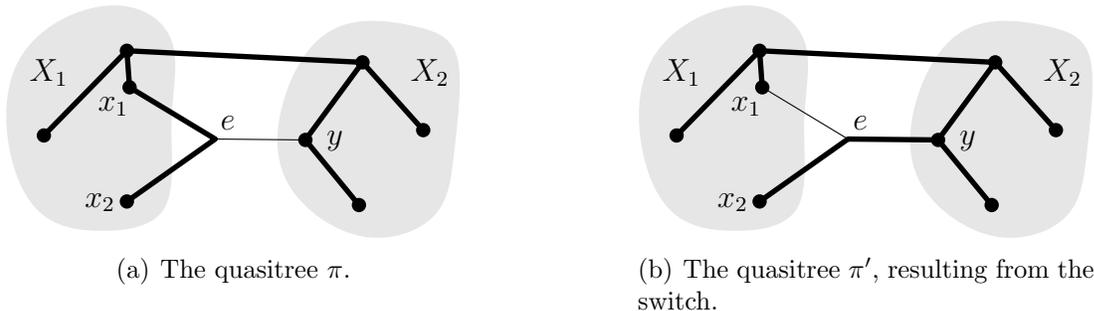

    \centering
    \sfig{22}{The quasitree $\pi$.}
    \hfilll
    \sfig{23}{The quasitree $\pi'$, resulting from the switch.}
    \caption{The case $\Phi_\pi(X_1)=1$ in the proof of
      Lemma~\ref{l:ext1}. The gray regions are the sets $X_1$ and
      $X_2$. Note how the switch of the representation of $e$ changes
      the parity of exactly one vertex degree in $X_1$.}
    \label{fig:odd}
  \end{figure}

  It remains to consider the case that $\Phi_\pi(X_1) = 1$,
  illustrated in Figure~\ref{fig:odd}. Here we
  need to `switch' the representation of $e$ (the hyperedge from (B2))
  as follows. Let $e=x_1x_2y$, with $\pi(e) = x_1x_2$. The removal of
  the edge $x_1x_2$ from $\pi^*$ splits $\pi^*$ into two components,
  each containing one of $x_1$ and $x_2$. By symmetry, we may assume
  that $y$ is contained in the component containing $x_1$. We define a
  new quasigraph $\pi'$ in $H$ by
  \begin{equation*}
    \pi'(e) =
    \begin{cases}
      x_2y & \text{ if $f=e$,}\\
      \pi(f) & \text{otherwise.}
    \end{cases}
  \end{equation*}
  Note that $\pi'$ is a quasitree and $\Phi_{\pi'}(X_1) =
  0$. Consequently, we can proceed as before, apply the induction
  hypothesis and eventually obtain a representation $\rho$ which
  satisfies the assertions of the lemma.
\end{proof}

%%%%%%%%%%%%%%%%%%%%%%%%%%%%%%%%%%%%%%%%%%%%%%%%%%%%%%%%%%%%%%%%%%%%%%

\section{Hamilton cycles in line graphs and claw-free graphs}
\label{sec:claw-free}

We recall two standard results which interpret the connectivity and
the hamiltonicity of a line graph in terms of its preimage. The first
result is a folklore observation, the second is due to Harary and
Nash-Williams~\cite{bib:HNW-eulerian}. We combine them into one
theorem, but before we state them, we recall some necessary
terminology.

Let $G$ be a graph. An edge-cut $C$ in $G$ is \emph{trivial} if it
consists of all the edges incident with some vertex $v$ of $G$. The
graph $G$ is \emph{essentially $k$-edge-connected} ($k\geq 1$) if
every edge-cut in $G$ of size less than $k$ is trivial. A subgraph $D$
of $G$ is \emph{dominating} if $G-V(D)$ has no edges.

\begin{theorem}\label{t:line-basic}
  For any graph $G$ and $k\geq 1$, the following holds:
  \begin{enumerate}[\quad(i)]
  \item $L(G)$ is $k$-connected if and only if $G$ is essentially
    $k$-edge-connected,
  \item $L(G)$ is hamiltonian if and only if $G$ contains a dominating
    connected eulerian subgraph $C$.
  \end{enumerate}
\end{theorem}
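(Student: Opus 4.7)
My plan is to exploit the identification $V(L(G))=E(G)$, with $e,f\in E(G)$ adjacent in $L(G)$ iff they share an endpoint. Both parts of the theorem then translate combinatorial properties of $L(G)$ directly into edge-theoretic properties of $G$.

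For part (i), I will use the observation that for any $S\sub E(G)$, the components of $L(G)-S$ correspond bijectively to those components of $G-S$ that carry at least one edge. This yields both directions: on the one hand, a disconnecting set $S$ with $|S|<k$ in $L(G)$ produces two edge-bearing components of $G-S$ and hence, after passing to a minimal sub-cut, a non-trivial edge-cut of $G$ of size at most $|S|<k$; conversely, a non-trivial edge-cut $F$ of $G$ with $|F|<k$ partitions $E(G)\setminus F$ into two mutually non-adjacent non-empty subsets of $L(G)-F$. Small degenerate cases where $|E(G)|$ is itself below $k+1$ are handled separately.

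For part (ii), the $(\Leftarrow)$ direction is constructive. Given a dominating connected eulerian subgraph $C$, I would fix an Euler circuit $W=v_0 e_1 v_1 \cdots e_\ell v_\ell$ of $C$ (with $v_\ell=v_0$) and, for each $f\in E(G)\setminus E(C)$, choose an endpoint $a(f)\in V(C)$ (possible by dominance). Then I build a cyclic sequence by traversing $W$ and inserting, right after the first appearance of each $e_i$ in the sequence, every edge $f$ with $a(f)=v_i$ in arbitrary order. Each pair of consecutive edges in this sequence shares the common vertex $v_i$, every edge of $G$ appears exactly once, and the sequence closes up, so it is a Hamilton cycle in $L(G)$.

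For the $(\Rightarrow)$ direction, given a Hamilton cycle $e_1 e_2 \cdots e_m e_1$ in $L(G)$, pick for each $i$ (indices mod $m$) a common endpoint $v_i$ of $e_i$ and $e_{i+1}$ and let $C$ be the subgraph of $G$ with edges $\{e_i : v_{i-1}\neq v_i\}$; each such $e_i$ then equals $v_{i-1}v_i$. The hinge sequence $v_0 v_1 \cdots v_{m-1} v_0$, after suppressing successive repetitions, is a closed walk in $G$ using exactly the edges of $C$, so $C$ is connected and every vertex has even degree. Any edge $e_i\notin E(C)$ satisfies $v_{i-1}=v_i\in V(C)$, so $e_i$ meets $V(C)$ and $C$ dominates. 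The delicate step is this $(\Rightarrow)$ part, where one must verify that the hinge choices compose into a genuine closed walk and that pathological small cases (e.g., $C$ collapsing to a single vertex when $G$ is a star) do not undermine the conclusions; the rest is bookkeeping.
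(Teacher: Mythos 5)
Your proof is essentially correct, but note that the paper itself does not prove Theorem~\ref{t:line-basic} at all: it is recalled as a standard result, part (i) being folklore and part (ii) being the theorem of Harary and Nash-Williams \cite{bib:HNW-eulerian}, so there is no internal proof to compare against; your argument is a faithful reconstruction of the standard proofs via the identification $V(L(G))=E(G)$ and the correspondence between components of $L(G)-S$ and edge-bearing components of $G-S$. Two spots deserve tightening. First, in part (i), the phrase ``after passing to a minimal sub-cut'' hides a real step: a minimal disconnecting subset $F\subseteq\partial_G(X)$, where $X$ is the vertex set of an edge-bearing component of $G-S$, can still be trivial, namely $F=E(v)$ for a vertex $v\notin X$ all of whose edges enter $X$ (such a $v$ is isolated in $G-S$). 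One fixes this by observing that every vertex of $X$ has an edge inside $X$ (so $F$ cannot be the star of a vertex of $X$) and by absorbing any such outside vertex $v$ into $X$, which strictly decreases the cut $\partial_G(X)$ while keeping both sides edge-bearing; since the cut size cannot drop to $0$ when $G$ is connected, after finitely many absorptions a non-trivial edge-cut of size less than $k$ is obtained. Second, in the $(\Leftarrow)$ direction of (ii), inserting the edges $f$ with $a(f)=v_i$ ``after each $e_i$'' double-counts such $f$ when the vertex $a(f)$ is visited more than once by the Euler circuit; insert each $f$ only at one fixed occurrence of $a(f)$. Finally, both parts carry the usual degenerate caveats (e.g.\ $|E(G)|\geq 3$ in (ii), and enough edges relative to $k$ in (i)), which the paper's loose statement also glosses over and which you partly flag; with these repairs your argument is complete and is exactly the classical one.
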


In a similar spirit, the minimum degree of $L(G)$ equals the minimum
edge weight of $G$, where the \emph{weight} of an edge $e$ is defined
as the number of edges incident with $e$ and distinct from it.

Given a set $X$ of vertices of $G$, an \emph{$X$-join} in $G$ is a
subgraph $G'$ of $G$ such that a vertex of $G$ is in $X$ if and only
if its degree in $G'$ is odd. (In particular, $\emptyset$-joins are
eulerian subgraphs.) 

We will need a lemma which has been used a number of times before,
either explicitly or implicitly. For completeness, we sketch a quick
proof.

\begin{lemma}\label{l:join}
  If $T$ is a tree and $X$ is a set of vertices of $T$ of even
  cardinality, then $T$ contains an $X$-join. 
\end{lemma}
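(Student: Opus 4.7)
The plan is to prove the lemma by induction on $|V(T)|$, by peeling off a leaf at each step.

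The base case $|V(T)| = 1$ is trivial: since $|X|$ is even and at most $1$, we must have $X = \emptyset$, and the empty subgraph is an $X$-join. For the inductive step, pick any leaf $\ell$ of $T$, let $e$ be its unique incident edge, and let $\ell'$ be the other endpoint of $e$. I would then split into two cases according to whether $\ell \in X$.

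If $\ell \notin X$, then $X \subseteq V(T-\ell)$ and we can apply the induction hypothesis to the tree $T - \ell$ with the same set $X$; the resulting $X$-join $J'$ is also an $X$-join in $T$, because $\ell$ has degree $0$ in $J'$, consistent with $\ell \notin X$. If $\ell \in X$, set $X' = X \triangle \{\ell, \ell'\}$; this set has even cardinality (we toggle membership of exactly two vertices) and is contained in $V(T-\ell)$. By induction, $T - \ell$ contains an $X'$-join $J'$, and I would take $J := J' \cup \{e\}$.

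The verification that $J$ is an $X$-join is a short degree check: the vertex $\ell$ gets degree $1$ in $J$, matching $\ell \in X$; the vertex $\ell'$ has its parity in $J'$ flipped by the new edge $e$, which exactly compensates for its toggled membership in $X'$ versus $X$; and every other vertex has the same degree in $J$ as in $J'$ and the same membership in $X$ as in $X'$.

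The only thing to watch out for is the parity bookkeeping in the case $\ell \in X$, which is where the even cardinality hypothesis is actually used (to guarantee that $|X'|$ is again even and induction applies); this is the one step that warrants care, but there is no deeper obstacle.
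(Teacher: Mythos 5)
Your proof is correct and is essentially the same argument as the paper's: an induction on the tree that deletes an edge, adjusts the parity of $X$ on the resulting pieces, and recurses. The only (inessential) difference is that you always delete a pendant edge, so that one of the two components of $T-e$ is a single vertex and only one inductive call is needed, whereas the paper splits along an arbitrary edge and applies induction to both sides.
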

\begin{proof}
  By induction on the order of $T$. If $\size{V(T)} = 1$, the
  assertion is trivial. Otherwise, choose an edge $e = v_1v_2$ and let
  $T_1$ and $T_2$ be components of $T-e$, $T_1$ being the one which
  contains $v_1$. Let $X_1$ be $X\cap V(T_1)$ if the size of this set
  is even; otherwise, set $X_1 = (X\cap V(T_1)) \oplus \Setx{v_1}$,
  where $\oplus$ stands for the symmetric difference. The induction
  yields an $X_1$-join $T'_1$ in $T_1$. A set $X_2$ and an $X_2$-join
  $T'_2$ in $T_2$ is obtained in a symmetric way. It is easy to check
  that the union of $T'_1$ and $T'_2$, with $e$ added if $\size{X\cap
    V(T_1)}$ is odd, is an $X$-join.
\end{proof}

If $G_1$ and $G_2$ are two graphs, then $G_1+G_2$ denotes the graph
whose vertex set is the (not necessarily disjoint) union of vertex
sets of $G_1$ and $G_2$, and whose multiset of edges is the multiset
union of $E(G_1)$ and $E(G_2)$. 

As the following lemma shows, an even quasitree in $H$ allows one to
find a connected spanning eulerian subgraph of $\gr H$ (see
Figure~\ref{fig:odd2} for an illustration):

  \begin{figure}
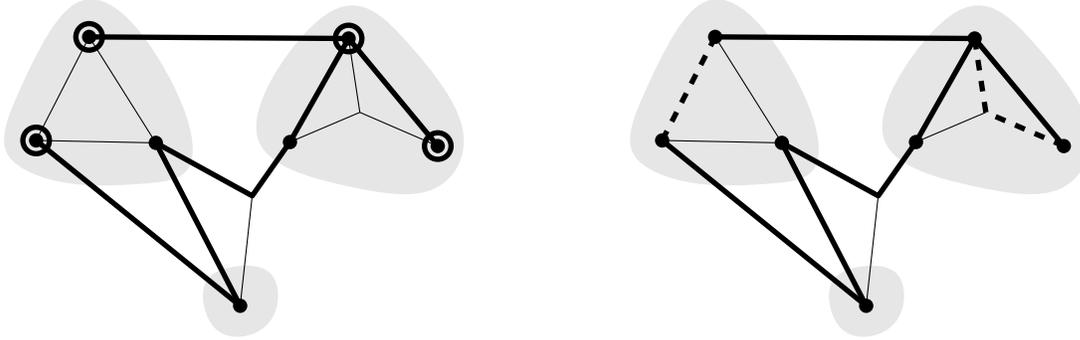

    \centering
    \sfig{24}{The vertices of odd degree in $\pi^*$ are circled.}
    \hfilll
    \sfig{25}{Since each component of $\barr\pi$ contains an even number of
      circled vertices, we can complete $\pi^*$ to an eulerian graph
      (the added edges are shown as dashed bold lines).}
    \caption{An illustration to Lemma~\ref{l:eulerian}. The gray
      regions are the components of $\barr\pi$, where $\pi$ is the
      quasigraph shown by solid bold lines.}
    \label{fig:odd2}
  \end{figure}

\begin{lemma}\label{l:eulerian}
  If $\pi$ is an even quasitree in $H$, then there is a quasigraph
  $\tau$ in $H$ such that $E(\pi)$ and $E(\tau)$ are disjoint, and
  $\pi^* + \tau^*$ is a connected eulerian subgraph of the graph
  $\gr H$ spanning all vertices in $V$.
\end{lemma}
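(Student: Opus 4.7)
The plan is to build $\tau$ component by component on the complement $\barr\pi$. For each component $K$ of $\barr\pi$, let
\[
  X_K = \Set{v\in V(K)}{d_{\pi^*}(v) \text{ is odd}}.
\]
The evenness hypothesis on $\pi$ says exactly that $\size{X_K}$ is even for every $K$. The main claim I would establish is that on each such $K$ there is a quasigraph $\tau_K$ in $K$ whose star-graph $\tau_K^*$ has odd-degree vertex set exactly $X_K$. The (well-defined) disjoint union of the $\tau_K$ will then be the desired $\tau$.

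To produce $\tau_K$ I would pass to the graph $\gr K$ of the hypergraph $K$. Since $K$ is connected as a hypergraph, $\gr K$ is a connected graph, so it has a spanning tree $T$. Because $\size{X_K}$ is even and $X_K \sub V(K) \sub V(\gr K)$, Lemma~\ref{l:join} supplies an $X_K$-join $J \sub T$ in $\gr K$, i.e., a subgraph whose odd-degree vertices are precisely those of $X_K$. I would then translate $J$ back into a quasigraph: no auxiliary vertex $v_e$ (for a $3$-hyperedge $e$ of $K$) lies in $X_K$, so the degree of $v_e$ in $J$ is even, and since $v_e$ has total degree $3$ in $\gr K$ it is forced to be $0$ or $2$. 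When it is $2$, set $\tau_K(e)$ to be the pair of vertices of $e$ adjacent to $v_e$ via edges of $J$; when it is $0$, set $\tau_K(e) = \emptyset$. For $2$-hyperedges $e$, set $\tau_K(e) = e$ if $e \in J$ and $\emptyset$ otherwise. A routine count then gives $d_{\tau_K^*}(v) = d_J(v)$ for each $v \in V(K)$, so $\tau_K^*$ has exactly the desired parity pattern.

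Combining the $\tau_K$ into one quasigraph $\tau$ in $H$ is straightforward, since every hyperedge of $\barr\pi$ belongs to a unique component of $\barr\pi$. By construction $E(\tau) \sub E(\barr\pi)$, so $E(\pi)\cap E(\tau) = \emptyset$. Then $\pi^* + \tau^*$ spans $V$ because $\pi^*$ already does; it is eulerian as a subgraph of $\gr H$ because every $v \in V$ has degree $d_{\pi^*}(v)+d_{\tau^*}(v)$, which is even by construction, while every auxiliary vertex $v_e$ appearing in the image has degree exactly $2$ (the hyperedge $e$ is used by at most one of $\pi,\tau$). Connectedness follows because $\pi^*$, once interpreted in $\gr H$ with the appropriate $v_e$'s inserted on each used $3$-hyperedge, is already a tree spanning $V$ together with $\Set{v_e}{e \in E(\pi),\ \size e = 3}$, and the additional edges and $v_e$'s contributed by $\tau$ only attach further material to the already-connected vertex set $V$.

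The one real subtlety to watch is that an arbitrary $X_K$-join in $\gr K$ might have odd degree at some auxiliary vertex $v_e$, in which case there is no way to extract a valid quasigraph representation for $e$. This obstruction disappears precisely because $X_K$ consists of vertices of $V(K)$ only and has even cardinality, so every $v_e$ has even degree (hence $0$ or $2$) in any $X_K$-join.
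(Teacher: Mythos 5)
Your proposal is correct and follows essentially the same route as the paper's proof: working one component $K$ of $\barr\pi$ at a time, taking a spanning tree of $\gr K$, applying Lemma~\ref{l:join} to obtain an $X_K$-join, and using the fact that the auxiliary vertices $v_e$ lie outside $X_K$ (hence have degree $0$ or $2$) to read the join back as a quasigraph $\tau_K$. The only difference is presentational: you spell out the translation from the join in $\gr K$ to the quasigraph a bit more explicitly than the paper does, but the argument is the same.
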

\begin{proof}
  Let $K$ be a component of $\barr\pi$, and let $X$ be the set of
  vertices of $K$ whose degree in $\pi^*$ is odd. Since $\pi$ is even,
  $\size X$ is even. Choose a spanning tree $T$ of the (connected)
  graph $\gr K$. Using Lemma~\ref{l:join}, choose a subforest $T'$ of
  $T$ such that for every vertex $w$ of $\gr K$, $d_{T'}(w)$ is odd if
  and only if $w\in X$. In $\pi^* + T'$, all the vertices of $K$ have
  even degrees. In fact, the same holds for any vertex $v_e$ of
  $\gr K$, where $e$ is a hyperedge of $H$ of size 3: if $e$ is used by
  $\pi$, then $d_{\pi^* + T'}(v_e) = 2$, and otherwise we have
  \begin{equation*}
    d_{\pi^* + T'}(v_e) = d_{T'}(v_e),
  \end{equation*}
  which is even since $v_e\notin X$. In particular, there is a
  quasigraph $\tau_K$ in $H$ such that $\tau_K^* = T'$.

  We apply the above procedure repeatedly, one component of $\barr\pi$
  at a time. For this, we need to be sure that a 3-hyperedge $e$ will
  not be used by $\tau_{K_1}$ as well as $\tau_{K_2}$, where $K_1$ and
  $K_2$ are distinct components of $\barr\pi$. This is clear, however,
  since $e$ can only be used by $\tau_K$ if $\size{e\cap V(K)}\geq
  2$. Thus, the components of $\barr\pi$ can be treated independently,
  and we eventually obtain an eulerian subgraph $S$ of $\gr H$. Since
  it contains the tree $\pi^*$, $S$ spans all of $V$, and since each
  of the trees $(\tau_K)^*$ contains an edge incident with a vertex in
  $V$ (unless $(\tau_K)^*$ is edgeless), it follows that $S$ is
  connected.
\end{proof}

Using Theorem~\ref{t:line-basic}, it will be easy to derive our main
result (Theorem~\ref{t:main}) as a consequence of the following
proposition. Let us remark that the proposition is closely related to
a conjecture made by B. Jackson
(see~\cite[Conjecture~4.48]{bib:BBHV-progress}) and implies one of its
three versions.

\begin{proposition}\label{p:essentially}
  If $G$ is an essentially 5-edge-connected graph with minimum edge
  weight at least 6, then $G$ contains a connected eulerian subgraph
  spanning all the vertices of degree at least 4 in $G$.
\end{proposition}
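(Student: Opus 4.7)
The plan is to convert $G$ into a 3-hypergraph $H$ (essentially inverting the $\gr H$ construction on low-degree vertices), apply Theorem~\ref{t:spanning-tree} to find a quasitree with tight complement, combine Lemmas~\ref{l:ext} and~\ref{l:eulerian} to obtain a connected eulerian subgraph of $\gr H$ spanning $V(H)$, and then lift this subgraph back to $G$ by reinserting the suppressed paths.

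First I exploit the minimum edge weight condition. If $uv \in E(G)$ has weight at least $6$, then $d_G(u) + d_G(v) \geq 8$, so the neighbor of any degree-$1$ vertex has degree $\geq 7$, both neighbors of any degree-$2$ vertex have degree $\geq 6$, and all three neighbors of any degree-$3$ vertex have degree $\geq 5$. In particular, every vertex of degree at most $3$ has all its neighbors in $V = \Setx{v \in V(G) : d_G(v) \geq 4}$, and no two low-degree vertices are adjacent. I define $H$ on vertex set $V$ by introducing a $2$-hyperedge $\Setx{u,v}$ for every edge $uv$ of $G$ with both ends in $V$, a $2$-hyperedge $\Setx{u,v}$ for every degree-$2$ vertex of $G$ with neighbors $u$ and $v$, and a $3$-hyperedge $\Setx{a,b,c}$ for every degree-$3$ vertex of $G$ with neighbors $a,b,c$; degree-$1$ vertices contribute no hyperedge. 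By construction, $\gr H$ is the graph obtained from $G$ by suppressing every degree-$2$ vertex and deleting every degree-$1$ vertex.

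Next I verify the hypotheses of Theorem~\ref{t:spanning-tree}. Given any partition $V = X \cup Y$ of $V(H)$ into nonempty classes, extend it to a partition of $V(G)$ by placing each low-degree vertex so as to minimize its contribution to the $G$-cut; every hyperedge crossing $X \cup Y$ then contributes exactly one edge to the $G$-cut while noncrossing hyperedges contribute none, so the two cuts have the same cardinality. An $H$-cut of size $k < 4$ would yield a $G$-cut of size $k < 5$, which by essential $5$-edge-connectedness must be trivial, isolating a single vertex $w$; since $X, Y \subseteq V$ necessarily $w \in V$ and the trivial cut at $w$ has $d_G(w) \geq 4 > k$ edges, a contradiction. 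Hence $H$ is $4$-edge-connected. For the second hypothesis, suppose a $3$-hyperedge $e = \Setx{a,b,c}$ arising from a degree-$3$ vertex $v$ lies in a $4$-cut of $H$ with $e$ split as $\Setx{a}$ versus $\Setx{b,c}$; placing $v$ on the side of $b, c$ gives a $G$-cut of size $4$, which again must be trivial. Since the side containing $b, c, v$ has at least three vertices, the isolated vertex is forced to be $a$, giving $d_G(a) = 4$, so the edge $va \in E(G)$ has weight $d_G(v) + d_G(a) - 2 = 5 < 6$, a contradiction.

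With both hypotheses verified, Theorem~\ref{t:spanning-tree} yields a quasitree $\pi$ with tight complement in $H$; Lemma~\ref{l:ext} replaces $\pi$ by an even quasitree $\rho$ with $E(\rho) = E(\pi)$; and Lemma~\ref{l:eulerian} provides a quasigraph $\tau$ such that $S := \rho^* + \tau^*$ is a connected eulerian subgraph of $\gr H$ spanning $V$. I then lift $S$ to a subgraph $S'$ of $G$ by reinserting the two-edge path through any suppressed degree-$2$ vertex whose edge in $\gr H$ is used by $S$, and by interpreting each edge of $S$ incident with some $v_e$ as the actual edge of $G$ joining the corresponding endpoint to the degree-$3$ vertex. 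Every vertex of $V$ retains the (even) degree it had in $S$, each involved degree-$2$ or degree-$3$ vertex of $G$ acquires degree $2$ in $S'$, and lifting preserves connectivity, so $S'$ is the required connected eulerian subgraph spanning all vertices of degree at least $4$. The main technical obstacle is the cut-bookkeeping in the previous paragraph, which is where the interplay of essential $5$-edge-connectedness and the minimum edge weight condition is used most sharply.
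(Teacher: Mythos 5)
Your overall pipeline is the right one and coincides with the paper's core: build a 3-hypergraph on the vertices of degree at least $4$, verify the hypotheses of Theorem~\ref{t:spanning-tree}, and then combine Lemmas~\ref{l:ext} and~\ref{l:eulerian}. Where you genuinely differ is the treatment of low-degree vertices: you encode each degree-2 vertex as a 2-hyperedge (suppression) and lift the eulerian subgraph of $\gr{H}$ back to $G$, whereas the paper first eliminates all vertices of degree at most $2$ and all parallel edges at degree-3 vertices by a minimal-counterexample reduction (its Claims~1 and~2), after which $\gr{H}=G$ exactly and no lifting is needed. Your shortcut would be fine for simple graphs, but the proposition lives in the multigraph setting (parallel edges are allowed throughout the paper, and are genuinely needed later for the $M$-closure argument). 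There it has a gap: a degree-2 vertex may have a single neighbour, joined to it by two parallel edges (its neighbour then has degree at least $7$, and the weight and essential connectivity hypotheses are satisfied), and a degree-3 vertex may have only two distinct neighbours (a double edge to a vertex of degree at least $6$ plus an edge to a vertex of degree at least $5$). In these cases your hyperedges $\Setx{u,v}$ and $\Setx{a,b,c}$ degenerate, and the assertion that $\gr{H}$ is $G$ with degree-2 vertices suppressed and degree-1 vertices deleted fails (suppressing the double edge creates a loop, which neither $H$ nor $\gr{H}$ can represent). These configurations are precisely what the paper's Claims~1 and~2 are there to remove. The gap is patchable --- give a single-neighbour degree-2 vertex no hyperedge at all, and replace a degenerate degree-3 vertex by the 2-hyperedge on its two neighbours, lifting a used such hyperedge to the two-edge path through that vertex --- but as written these cases are simply not covered.

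A smaller slip sits in your cut bookkeeping. In both the 4-edge-connectivity argument and the 4-cut argument, the phrase ``since $X,Y\sub V$ necessarily $w\in V$'' is not a proof: the extended partition of $V(G)$ places low-degree vertices on both sides, so a priori the trivial edge-cut contained in your crossing set could be centred at a vertex of degree at most $3$. The correct reason is your own minimizing placement: with it, at most one edge at each low-degree vertex crosses the partition, so the set of \emph{all} edges at such a vertex can never be contained in the crossing set; hence the centre of the trivial cut lies in $V$, has degree at least $4$, and the contradiction (respectively, the identification of the isolated vertex with $a$, which also needs connectivity of $G$ to force one side of the partition to be a singleton) follows. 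This needs to be said explicitly, since it is exactly the point where essential 5-edge-connectivity interacts with your placement rule.
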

\begin{proof}
  For the sake of a contradiction, let $G$ be a counterexample with as
  few vertices as possible. Since the claim is trivially true for a
  one-vertex graph, we may assume $\size{V(G)} \geq 2$. For brevity, a
  \emph{good subgraph} in a graph $G'$ will be a connected eulerian
  subgraph spanning all the vertices of degree at least 4 in $G'$.
  \setcounter{claim}{0}
  \begin{claim}\label{cl:mindeg}
    The minimum degree of $G$ is at least 3.
  \end{claim}
  \begin{claimproof}
    Suppose first that $G$ contains a vertex $v$ of degree 2 with
    distinct neighbours $w_1$ and $w_2$. If we suppress $v$, the
    resulting graph $G'$ will be essentially
    5-edge-connected. Furthermore, the minimum edge weight of $G'$ is
    at least 6 unless $G$ is the triangle $vw_1w_2$ with the edge
    $w_1w_2$ of multiplicity 5, which is however not a counterexample
    to the proposition. By the minimality assumption, $G'$ contains a
    good subgraph $C'$. It is easy to see that the corresponding
    subgraph of $G$ is also good.

    Suppose then that $G$ contains a vertex $u$ of degree 1 or 2 with
    a single neighbour $z$. Let $U$ be the set of all the vertices of
    degree 1 or 2 in $G$ whose only neighbour is $z$. If $V(G) = U
    \cup \Setx z$, then the Eulerian subgraph consisting of just the
    vertex $z$ shows that $G$ is not a counterexample to the
    proposition. Thus, $z$ has a neighbour $x$ outside $U$. In fact,
    since $G$ is essentially 5-edge-connected, $z$ is incident with at
    least 5 edges whose other endvertex is not in $U$. Let $e$ be an
    edge with endvertices $z$ and $x$. Since the degree of $x$ is at
    least 3, the edge weight of $e$ in $G-U$ is at least 6. This
    implies that the minimum edge weight of $G-U$ is at least 6. Since
    the removal of $U$ does not create any new minimal essential
    edge-cut, $G-U$ is essentially 5-edge-connected. Since the degree
    of $z$ in $G-U$ is at least 5, any good subgraph in $G-U$ is a
    good subgraph in $G$. Thus, $G-U$ is a smaller counterexample than
    $G$, contradicting the minimality of $G$.
  \end{claimproof}
  
  \begin{claim}\label{cl:parallel}
    No vertex of degree 3 in $G$ is incident with a pair of parallel
    edges.
  \end{claim}
  \begin{claimproof}
    Suppose that $v$ is a vertex of degree 3 incident with parallel
    edges $e_1,e_2$. If $v$ has only one neighbour, then any good
    subgraph of $G-v$ is good in $G$. By the minimality of $G$, $v$
    must have exactly two neighbours, say $w$ and $z$, where $w$ is
    incident with $e_1$ and $e_2$. Let $G'$ be obtained from $G$ by
    removing $v$ and adding the edge $e_0$ with endvertices $w$ and
    $z$.

    It is easy to see that $G'$ is essentially 5-edge-connected, and
    that any good subgraph of $G'$ can be modified to a good subgraph
    of $G$ (as $d_G(w)\geq 6$). We show that the minimal edge weight
    in $G'$ is at least 6.

    Suppose the contrary and let $e$ be an edge of $G'$ of weight less
    than 6. We have $e\neq e_0$ as the assumptions imply that
    $d_G(w)\geq 6$ and $d_G(z)\geq 5$, so the weight of any edge with
    endvertices $w$ and $z$ in $G'$ is at least 8. Thus, $e$ is an
    edge of $G$.
    
    It must be incident with $w$, for otherwise its weight in $G'$
    would be the same as in $G$. Let $u$ be the endvertex of $e$
    distinct from $w$. Since $d_G(w)\geq 6$, $w$ is incident in $G'$
    with at least 3 edges of $G'$ distinct from $e_0$ and $e$. By the
    weight assumption, $u$ must be incident with only at most one edge
    of $G'$ other than $e$, contradicting Claim~\ref{cl:mindeg}. 
  \end{claimproof}

  Let $H$ be the 3-hypergraph whose vertex set $V$ is the set of all
  vertices of $G$ whose degree is at least 4; the hyperedges of
  $H$ are of two kinds:
  \begin{itemize}
  \item the edges of $G$ with both endvertices in $V$,
  \item 3-hyperedges consisting of the neighbours of any vertex of
    degree 3 in $G$. 
  \end{itemize}
  Note that $H$ is well-defined, for any neighbour of a vertex of
  degree 3 in $G$ must have degree at least 4 (otherwise they would be
  separated from the rest of the graph by an essential edge-cut of
  size at most 4). Furthermore, by Claim~\ref{cl:parallel}, any vertex
  of degree 3 does indeed have three distinct neighbours in $V$. 

  In the following two claims, we show that $H$ satisfies the
  hypotheses of Theorem~\ref{t:spanning-tree}.

  \begin{claim}\label{cl:4-conn}
    The hypergraph $H$ is 4-edge-connected.
  \end{claim}
  \begin{claimproof}
    Suppose that this is not the case and $F$ is an inclusionwise
    minimal edge-cut in $H$ with $\size F \leq 3$. Let $A$ be the
    vertex set of a component of $H-F$.

    Let $e\in F$. By the minimality of $G$, $\size{e-A}\geq 1$. We
    assign to $e$ an edge $e'$ of $G$, defined as follows:
    \begin{itemize}
    \item if $\size e = 2$, then $e' = e$,
    \item if $\size e = 3$ and $e\cap A = \Setx{u}$, then $e' = uv_e$,
    \item if $\size e = 3$, $\size{e\cap A} = 2$ and $e-A = \Setx{u}$,
      then $e' = uv_e$.
    \end{itemize}
    Observe that $F' := \Set{e'}{e\in F}$ is an edge-cut in $G$. Since
    $G$ is 5-edge-connected, $F'$ must be a trivial edge-cut. This
    means that a vertex $v\in V$ has degree 3 in $H$, a contradiction
    as $v$ has degree at least 4 in $G$ and therefore also in $H$.
  \end{claimproof}
  
  The other claim regards edge-cuts of size 4 in $H$:

  \begin{claim}
    No 3-hyperedge of $H$ is included in an edge-cut of size 4 in
    $H$. 
  \end{claim}
  \begin{claimproof}
    Let $F$ be an edge-cut of size 4 in $H$. As in the proof of
    Claim~\ref{cl:4-conn}, we consider the corresponding edge-cut
    $F'$ in $G$. Since $G$ is essentially 5-edge-connected, one
    component of $G-F'$ consists of a single vertex $w$ whose degree
    in $G$ is 4. Assuming that $F$ includes a 3-hyperedge $e$, we find
    that in $G$, $w$ has a neighbour $v$ of degree 3. Since the weight
    of the edge $vw$ is 5, we obtain a contradiction with our
    assumptions about $G$.
  \end{claimproof}

  Since the assumptions of Theorem~\ref{t:spanning-tree} are
  satisfied, we can use it to find a quasitree $\pi$ with tight
  complement in $H$. By Lemmas~\ref{l:ext} and \ref{l:eulerian}, $\gr H
  = G$ admits a connected eulerian subgraph spanning the set $V$. This
  is what we wanted to find.
\end{proof}

We can now prove our main theorem, stated as Theorem~\ref{t:main} in
Section 1: \newtheorem*{thm}{Theorem}
\begin{thm}
  Every 5-connected line graph of minimum degree at least 6 is
  hamiltonian.
\end{thm}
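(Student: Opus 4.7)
The plan is to translate the hypotheses from the line graph $L(G)$ back to the graph $G$ itself via Theorem~\ref{t:line-basic}, then apply Proposition~\ref{p:essentially} to find the connected eulerian subgraph of $G$ that Theorem~\ref{t:line-basic}(ii) demands.

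First I would fix a graph $G$ with $L(G)$ equal to the given line graph. From the assumption that $L(G)$ is 5-connected, Theorem~\ref{t:line-basic}(i) yields that $G$ is essentially 5-edge-connected. The degree of any vertex $e\in V(L(G))$ equals the edge weight of $e$ in $G$ (namely $d_G(u)+d_G(v)-2$ for $e=uv$), so the hypothesis that $L(G)$ has minimum degree at least 6 is precisely the statement that every edge of $G$ has weight at least 6. Thus $G$ satisfies the hypotheses of Proposition~\ref{p:essentially}, and the proposition produces a connected eulerian subgraph $D$ of $G$ spanning every vertex of degree at least 4.

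Next I would verify that $D$ is a dominating subgraph of $G$. Let $e=uv$ be any edge of $G$. The weight condition gives $d_G(u)+d_G(v)\geq 8$, so at least one endvertex of $e$ has degree at least 4 and therefore lies in $V(D)$; hence $G-V(D)$ is edgeless. Applying Theorem~\ref{t:line-basic}(ii) we conclude that $L(G)$ is hamiltonian.

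In this sense the theorem is an immediate corollary of Proposition~\ref{p:essentially}: all the real work has been pushed into that proposition and, beneath it, into the machinery for quasitrees with tight complement culminating in Theorem~\ref{t:spanning-tree}. The only conceptual step here is the observation that the minimum edge weight bound simultaneously (i) matches the minimum degree condition of $L(G)$ and (ii) forces every edge of $G$ to touch a vertex of degree at least 4, which is exactly the set spanned by the eulerian subgraph delivered by the proposition. There is no real obstacle at this final stage; the delicate points are all internal to the proof of Proposition~\ref{p:essentially} (in particular the minimum-degree and parallel-edge reductions used there to justify the construction of the auxiliary 3-hypergraph).
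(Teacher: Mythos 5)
Your proof is correct and follows essentially the same route as the paper: translate the hypotheses via Theorem~\ref{t:line-basic}(i) and the degree/edge-weight correspondence, invoke Proposition~\ref{p:essentially}, and check domination from the weight bound before applying Theorem~\ref{t:line-basic}(ii). The paper's phrasing of the final domination step (an edge avoiding the eulerian subgraph would have both ends of degree at most 3 and hence weight at most 4) is just a reformulation of your observation that $d_G(u)+d_G(v)\geq 8$ forces an endvertex of degree at least 4.
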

\begin{proof}
  Let $L(G)$ be a 5-connected line graph of minimum degree at least
  6. By Theorem~\ref{t:line-basic}(i), $G$ is essentially
  5-edge-connected. Furthermore, the minimum edge weight of $G$ is at
  least 6. By Proposition~\ref{p:essentially}, $G$ contains a
  connected eulerian subgraph $C$ spanning all the vertices of degree
  at least 4. By Theorem~\ref{t:line-basic}(ii), it is sufficient to
  prove that $G-V(C)$ has no edges. Indeed, the vertices of any edge
  $e$ in $G-V(C)$ must have degree at most 3 in $G$, which implies
  that $e$ is incident to at most 4 other edges of $G$, a
  contradiction to the minimum degree assumption. Thus, $L(G)$ is
  hamiltonian.
\end{proof}

Using the claw-free closure concept developed by
Ryj\'{a}\v{c}ek~\cite{bib:Ryj-closure}, Theorem~\ref{t:main} can be
extended to claw-free graphs. Let us recall the main result of
\cite{bib:Ryj-closure}:
\begin{theorem}\label{t:closure}
  Let $G$ be a claw-free graph. Then there is a well-defined graph
  $cl(G)$ (called the \emph{closure} of $G$) such that the following
  holds:
  \begin{enumerate}[\quad (i)]
  \item $G$ is a spanning subgraph of $cl(G)$,
  \item $cl(G)$ is the line graph of a triangle-free graph,
  \item the length of a longest cycle in $G$ is the same as in
    $cl(G)$.
  \end{enumerate}
\end{theorem}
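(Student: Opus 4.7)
The plan is to follow Ryj\'{a}\v{c}ek's original construction: call a vertex $v$ of a claw-free graph $G$ \emph{eligible} if its neighbourhood $N_G(v)$ induces a connected graph that is not a clique, and define the \emph{local completion} $G^v$ to be the graph obtained from $G$ by inserting every missing edge inside $N_G(v)$. Set $cl(G)$ to be the graph produced by iterating local completions at eligible vertices until none remain. Part (i) holds immediately since we only add edges. A preliminary step is to verify that $G^v$ is again claw-free, which follows from the connectedness of $G[N(v)]$ by a short case analysis on how a putative induced claw in $G^v$ could arise.

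For well-definedness one must show that the terminal graph of the iteration is independent of the order in which local completions are performed. The cleanest formulation characterises $cl(G)$ as the unique minimum spanning supergraph of $G$ containing no eligible vertex. I would prove this via a confluence (diamond) lemma: if $u$ and $v$ are both eligible in $G$, then performing the two completions in either order, followed possibly by one further completion, yields the same graph; a straightforward induction then handles arbitrary closure sequences.

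Part (ii) would be obtained in two steps. First, combine claw-freeness with the absence of eligible vertices in $cl(G)$ to conclude that the neighbourhood of every vertex of $cl(G)$ is the disjoint union of at most two cliques: three pairwise non-adjacent neighbours would give an induced claw, while a connected non-clique neighbourhood would leave the vertex eligible. Second, invoke the Krausz-type characterisation of line graphs of triangle-free graphs (each neighbourhood splits into at most two cliques and no triangle decomposes as three edge-cliques) and explicitly reconstruct the triangle-free preimage from the maximal cliques of $cl(G)$.

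Part (iii) is the heart of the theorem and, I expect, the main obstacle. It suffices to prove that a single local completion $G\to G^v$ preserves the length of a longest cycle. The inequality $c(G)\le c(G^v)$ is immediate. For the reverse, let $C$ be a longest cycle of $G^v$ and consider its edges lying in $E(G^v)\setminus E(G)$, all of which are chords inside $N(v)$. If $v\notin V(C)$ and some such chord $xy$ lies on $C$, then replacing $xy$ by the path $xvy$ produces a strictly longer cycle in $G$, contradicting maximality; so $v\in V(C)$. Each remaining new edge $ab$ of $C$ inside $N(v)$ must then be swapped for a walk inside the connected graph $G[N(v)]$, rerouting $C$ through $v$ in such a way that no net length is lost and no vertex is used twice. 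Designing this simultaneous exchange, so that it is compatible with the two edges of $C$ already incident with $v$ and cannot introduce shortcuts, is precisely where essentially all of the technical work sits.
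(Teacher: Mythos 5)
First, note that the paper does not prove this statement at all: Theorem~\ref{t:closure} is quoted as the main result of Ryj\'{a}\v{c}ek's paper \cite{bib:Ryj-closure}, so there is no internal proof to compare yours with. Your plan correctly reproduces the architecture of Ryj\'{a}\v{c}ek's original argument: local completion at eligible vertices, preservation of claw-freeness, uniqueness of the terminal graph, and for part (ii) the two-clique neighbourhood structure combined with a Krausz-type characterisation. Those parts are sound modulo the routine verifications you indicate; for (ii) you should also record the small observation that a disconnected neighbourhood in a claw-free graph has exactly two components, each of which must be a clique (otherwise two non-adjacent vertices in one component together with any vertex of the other yield a claw). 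For well-definedness, the confluence route is workable but is not the standard one; Ryj\'{a}\v{c}ek instead shows directly that every edge added along one maximal completion sequence must be present in the terminal graph of any other, which avoids having to prove a local diamond property.

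The genuine gap is exactly where you place it, in part (iii), and the mechanism you sketch for closing it would not work as stated. Replacing a new edge $ab$ of $C$ by ``a walk inside the connected graph $G[N(v)]$'' is problematic: internal vertices of such a walk may already lie on $C$, or on the detours chosen for other new edges, so the result need not be a cycle, and there is no control on its length. The actual argument does not reroute through $G[N(v)]$ at all. One takes, among all longest cycles of $G^v$, one with the fewest edges from $E(G^v)\setminus E(G)$; after your (correct) observation that $v$ must lie on $C$, one fixes a new edge $ab$ on $C$, lets $x_1,x_2$ be the two neighbours of $v$ on $C$, and performs a case analysis on which pairs among $a,b,x_1,x_2$ are adjacent in $G$, using claw-freeness of $G$ at $v$ (and at $a$ and $b$) to exhibit an equally long cycle of $G^v$ with strictly fewer new edges --- contradicting the choice of $C$. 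Without this, or an equivalent exchange argument, part (iii) is unproved, and part (iii) carries essentially the entire content of the theorem.
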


\begin{corollary}\label{cor:claw-free}
  Every 5-connected claw-free graph $G$ of minimum degree at least 6
  is hamiltonian.
\end{corollary}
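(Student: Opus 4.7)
The plan is to reduce the claw-free case to the line-graph case via Ryjáček's closure and then invoke Theorem~\ref{t:main}. Let $G$ be a 5-connected claw-free graph of minimum degree at least 6, and let $cl(G)$ be the closure supplied by Theorem~\ref{t:closure}. By part~(ii) of that theorem, $cl(G) = L(H)$ for some triangle-free graph $H$, so $cl(G)$ is a line graph, and by part~(i), $G$ is a spanning subgraph of $cl(G)$.

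The first step is to transfer the hypotheses from $G$ to $cl(G)$. Since $cl(G)$ is obtained from $G$ by adding edges on the same vertex set, the minimum degree of $cl(G)$ is at least that of $G$, hence at least 6; likewise, adding edges cannot decrease vertex connectivity, so $cl(G)$ remains 5-connected. Thus $cl(G)$ is a 5-connected line graph of minimum degree at least 6.

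The second step is to apply Theorem~\ref{t:main} to conclude that $cl(G)$ is hamiltonian. Since $G$ and $cl(G)$ share the same vertex set, a Hamilton cycle in $cl(G)$ has length $\size{V(G)}$, so $cl(G)$ has circumference $\size{V(G)}$. By Theorem~\ref{t:closure}(iii), $G$ has the same circumference, and therefore $G$ is hamiltonian as well.

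I expect no serious obstacle here: the entire content of the corollary is packaged in the closure theorem and the main theorem, so the proof is essentially a three-line verification that closure preserves the relevant assumptions. The only point worth stating carefully is the monotonicity of connectivity and minimum degree under edge addition on a fixed vertex set, which is standard.
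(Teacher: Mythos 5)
Your proposal is correct and follows exactly the paper's own argument: transfer 5-connectedness and minimum degree to the closure $cl(G)$, apply Theorem~\ref{t:main} to the line graph $cl(G)$, and use Theorem~\ref{t:closure}(iii) (preservation of circumference on the common vertex set) to conclude that $G$ is hamiltonian. No gaps.
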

\begin{proof}
  Apply Theorem~\ref{t:closure} to obtain the closure $cl(G)$ of
  $G$. Since $G \sub cl(G)$, the closure is 5-connected and has
  minimum degree at least 6. Being a line graph, $cl(G)$ is
  hamiltonian by Theorem~\ref{t:main}. Since $G$ is a spanning
  subgraph of $cl(G)$, property (iii) in Theorem~\ref{t:closure}
  implies that $G$ is hamiltonian.
\end{proof}

%%%%%%%%%%%%%%%%%%%%%%%%%%%%%%%%%%%%%%%%%%%%%%%%%%%%%%%%%%%%%%%%%%%%%%

\section{Hamilton-connectedness}
\label{sec:ham-conn}

Recall from Section~\ref{sec:intro} that a graph is Hamilton-connected
if for every pair of distinct vertices $u,v$, there is a Hamilton path
from $u$ to $v$. The method used to prove Theorem~\ref{t:main} and
Corollary~\ref{cor:claw-free} can be adapted to yield the following
stronger result:

\begin{theorem}
  \label{t:ham-conn}
  Every 5-connected claw-free graph of minimum degree at least 6 is
  Hamilton-connected.
\end{theorem}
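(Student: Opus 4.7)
The plan is to parallel the proof of Theorem~\ref{t:main} with three structural modifications: replace the Ryj\'{a}\v{c}ek closure by a variant that preserves Hamilton-connectedness, replace the dominating-eulerian-subgraph characterization by its Hamilton-path analogue, and strengthen the ``even quasitree'' machinery to accommodate prescribed parity at two designated vertices. First, I would invoke a closure operation on claw-free graphs (in a form developed specifically for Hamilton-connectedness) that reduces the problem to line graphs: if $G$ is 5-connected claw-free with minimum degree at least $6$, then its Hamilton-connected closure $cl(G) = L(G_0)$ is 5-connected with minimum degree at least $6$, $G_0$ is triangle-free, and $G$ is Hamilton-connected iff $cl(G)$ is. Hamilton-connectedness of $L(G_0)$ then translates (by a Harary--Nash-Williams-type analogue of Theorem~\ref{t:line-basic}(ii)) into the following property of $G_0$: for every pair of distinct edges $e_1,e_2 \in E(G_0)$, $G_0$ contains a dominating $(e_1,e_2)$-trail, i.e.\ a trail starting with $e_1$ and ending with $e_2$ whose vertex set meets every edge of $G_0$.

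The core analytic step is a strengthening of Proposition~\ref{p:essentially}: given an essentially 5-edge-connected graph $G_0$ of minimum edge weight at least $6$ together with two edges $e_1 = a_1b_1$ and $e_2 = a_2b_2$, produce a connected subgraph $C$ of $G_0$ that contains $e_1, e_2$, spans every vertex of degree at least $4$, and has exactly $a_1$ and $a_2$ as its vertices of odd degree. An Eulerian-trail argument then extracts the required dominating $(e_1,e_2)$-trail. I would follow the blueprint of Proposition~\ref{p:essentially}: form the 3-hypergraph $H$ by collapsing each degree-$3$ vertex into a 3-hyperedge on its three neighbours, apply Theorem~\ref{t:spanning-tree} to obtain a quasitree $\pi$ with tight complement in $H$, and then complete $\pi^*$ as in Lemma~\ref{l:eulerian}. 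The essential new ingredient is a refined version of Lemma~\ref{l:ext1}: given prescribed $a_1, a_2 \in V$ and $X \sub V$ with a suitable parity hypothesis on $\Phi_\pi(X)$ relative to $|\{a_1, a_2\} \cap X|$, produce an $X$-similar quasitree $\rho$ satisfying $\Phi_\rho(V(K)) \equiv |V(K) \cap \{a_1, a_2\}| \pmod 2$ for every component $K$ of $\barr\rho$ with $V(K) \sub X$. The induction proceeds as in Lemma~\ref{l:ext1}, with an additional case analysis according to how $\{a_1, a_2\}$ distributes over the decomposition $X = X_1 \cup X_2$ furnished by the tight-complement property, and with the ``switch'' at the hyperedge from condition (B2) used (once or twice) to rebalance parities; finally, the completion in Lemma~\ref{l:eulerian} is adjusted so that the added forest $T'$ is a $Y$-join for a suitable $Y$ rather than an $\emptyset$-join.

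The main obstacle, I expect, is not the quasitree machinery but the boundary bookkeeping at the hypergraph level. When an endpoint of $e_1$ or $e_2$ has degree $3$ in $G_0$, that vertex is absorbed into a 3-hyperedge of $H$, and one must arrange for the reconstructed eulerian-type subgraph to \emph{actually use} the prescribed edge $e_i$ rather than some other edge at that vertex; this forces a careful choice of the quasigraph representation on the relevant hyperedge. The configurations in which $e_1$ and $e_2$ share a vertex, or are incident to vertices of degree less than $4$, each require separate ad hoc reductions in the spirit of Claims~\ref{cl:mindeg} and~\ref{cl:parallel}, and one must verify that these reductions preserve both the essential 5-edge-connectivity and the minimum edge weight hypothesis in the presence of the two designated edges. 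Finally, checking that the Hamilton-connected closure does not degrade the connectivity or minimum-degree hypotheses under our assumptions is a separate technical matter, which I expect to follow from the existing closure machinery with only modest adaptation.
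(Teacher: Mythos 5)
Your overall strategy matches the paper's (a Hamilton-connectedness-preserving closure in place of the ordinary Ryj\'{a}\v{c}ek closure, a trail characterization in the preimage, and a parity-prescribed strengthening of the even-quasitree machinery, which is exactly the paper's Lemma~\ref{l:ext-join}), but your central reduction has a genuine gap. You propose to find a connected subgraph $C$ of $G_0$ that \emph{contains} $e_1,e_2$, spans $V_{\geq 4}$, and has exactly $a_1,a_2$ as odd vertices, and then to ``extract'' an $(e_1,e_2)$-trail by an Eulerian argument. But an Euler trail between the two odd vertices cannot in general be forced to begin with a prescribed edge $e_1$ and end with a prescribed edge $e_2$: take $C$ consisting of $e_1=a_1b_1$, $e_2=b_1a_2$, and a triangle through $a_1$ on two extra vertices; then $a_1,a_2$ are the only odd vertices, $C$ is connected and contains $e_1,e_2$, yet the unique Euler trail from $a_1$ to $a_2$ must exhaust the triangle first and cannot start with $e_1$. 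Equivalently, what you really need is that $C-e_1-e_2$ be connected with odd vertices at the free ends, which is why the paper instead finds a connected $\Setx{u_{e_1},u_{e_2}}$-join in $G-e_1-e_2$ (a join avoiding $e_1,e_2$ altogether) and only then appends $e_1,e_2$ as the first and last edges; it also chooses $u_{e_i}$ to be an endvertex of $e_i$ of degree at least $4$, since the prescribed odd vertices must live in $V_{\geq 4}$, and the resulting trail is \emph{internally} dominating (the form actually required by Theorem~\ref{t:ham-conn-preimage}), which your weaker ``vertex set meets every edge'' condition does not by itself deliver.

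This repair has a knock-on effect that your plan does not anticipate: once $e_1$ and $e_2$ (or the corresponding pieces of 3-hyperedges at degree-3 endvertices) are removed from the hypergraph, the resulting hypergraph $H'$ need no longer be 4-edge-connected, so you cannot simply ``apply Theorem~\ref{t:spanning-tree}'' as you do for the unmodified $H$. The paper reruns the \emph{proof} of Theorem~\ref{t:spanning-tree} on $H'$: the argument up to \eqref{eq:both} survives, but \eqref{eq:verts} is replaced by a degree-sum comparison $s(H'/\PP)\geq s(H/\PP)-4$ (the deletions cost at most $4$ in the degree sum), which weakens \eqref{eq:star} to $m_3\geq\barr{m_3}+n^*_{5^+}+2$ --- still contradicting $m_3\leq n^*_{5^+}$. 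Conversely, your alternative of keeping $e_1,e_2$ inside $H$ and trying to force the reconstructed subgraph to use them is not something the quasitree-plus-completion machinery (Theorem~\ref{t:spanning-tree}, Lemmas~\ref{l:ext1} and~\ref{l:eulerian}) provides, and even if it did, the trail-extraction obstruction above would remain. The closure step and the $X$-join generalization in your sketch are sound and coincide with the paper (the $M$-closure of Ryj\'{a}\v{c}ek--Vr\'{a}na, yielding the line graph of a multigraph rather than of a triangle-free graph), but the two points above are where the actual work of the paper's Section~\ref{sec:ham-conn} lies and where your proposal, as written, would fail.
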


In this section, we sketch the necessary modifications to the
argument. For a start, let $H = L(G)$ be a 5-connected line graph of
minimum degree at least 6. By considerations similar to those in the
proof of Proposition~\ref{p:essentially}, it may be assumed that the
minimum degree of $G$ is at least 3 and that no vertex of $G$ is
incident with a pair of parallel edges, so we may associate with $G$ a
3-hypergraph $H$ just as in that proof. Moreover, $H$ may again be
assumed to satisfy the assumptions of Theorem~\ref{t:spanning-tree}.

Let $V_{\geq 4} \subseteq V(G)$ be the set of vertices of degree at
least 4 in $G$.

First, we will need a replacement of Theorem~\ref{t:line-basic}(ii)
that translates the Hamilton-connectedness of $H$ to a property of
$G$. A \emph{trail} $F$ is a sequence of edges of $G$ such that each
pair of consecutive edges is adjacent in $G$, and $F$ contains each
edge of $G$ at most once. We will say that $F$ \emph{spans} a set $Y$
of vertices if each vertex in $Y$ is incident with an edge of $F$. A
trail is an \emph{$(e_1,e_2)$-trail} if it starts with $e_1$ and ends
with $e_2$. Furthermore, an $(e_1,e_2)$-trail $F$ is \emph{internally
  dominating} if every edge of $G$ has a common endvertex with some
edge in $F$ other than $e_1$ and $e_2$. The following fact is
well-known (see, e.g., \cite{bib:LLZ-eulerian}):

\begin{theorem}\label{t:ham-conn-preimage}
  Let $G$ be a graph with at least 3 edges. Then $L(G)$ is
  Hamilton-connected if and only if for any pair of edges $e_1,e_2\in
  E(G)$, $G$ has an internally dominating $(e_1,e_2)$-trail.
\end{theorem}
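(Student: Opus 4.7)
The statement is an equivalence between Hamilton-connectedness of $L(G)$ and the existence of suitable trails in $G$; I would prove the two directions separately.

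For the forward direction ($\Rightarrow$), suppose $L(G)$ admits a Hamilton path $P = (f_1, f_2, \ldots, f_n)$ with $f_1 = e_1$ and $f_n = e_2$, where $n = |E(G)|$. Since consecutive vertices of $P$ are adjacent in $L(G)$, consecutive edges $f_i, f_{i+1}$ share an endpoint in $G$. Hence $(f_1, \ldots, f_n)$ is itself an $(e_1, e_2)$-trail in $G$ in the sense of the excerpt's definition (a sequence of distinct edges with consecutive ones adjacent in $G$), and it contains every edge of $G$. The internally dominating property is then immediate: every $f_i$ with $2 \le i \le n-1$ is itself internal and shares its endpoints with itself, while $e_1$ and $e_2$ share a vertex with $f_2$ and $f_{n-1}$ respectively, both of which are internal because $n \ge 3$ by hypothesis.

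For the reverse direction ($\Leftarrow$), suppose $T = (t_1, \ldots, t_k)$ is an internally dominating $(e_1, e_2)$-trail. The sequence $t_1, \ldots, t_k$ itself realises a path in $L(G)$ from $e_1$ to $e_2$, so the plan is to extend this into a Hamilton path by inserting each non-trail edge $f \in E(G) \setminus T$ as a detour at an internal vertex of $T$. Concretely, I would fix shared vertices $v_j \in V(t_j) \cap V(t_{j+1})$ for each $1 \le j \le k-1$, and assign each non-trail edge $f$ to some $v_j$ incident with $f$; letting $S_j$ collect the edges assigned to $v_j$, the sequence
\[
  t_1, S_1, t_2, S_2, \ldots, t_{k-1}, S_{k-1}, t_k
\]
(with each $S_j$ listed in any order) forms the required Hamilton path. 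It is valid because the set $\{t_j, t_{j+1}\} \cup S_j$ lies entirely in the clique of $L(G)$ induced by edges of $G$ incident to $v_j$, so all needed adjacencies hold, and every edge of $G$ appears exactly once.

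The main obstacle in the reverse direction is showing that the assignment of non-trail edges to shared vertices can actually be carried out. By the internally dominating hypothesis, each $f \in E(G)\setminus T$ has a common endpoint $w$ with some internal $t_i$; but $w$ might not coincide with either $v_{i-1}$ or $v_i$ --- which fails precisely when $v_{i-1} = v_i$ and $w$ is the ``uncovered'' second endpoint of $t_i$. To handle this, I would make the selection of the $v_j$ with this constraint in mind, exploiting the freedom available when consecutive trail edges share several vertices, or locally reroute $T$ through $f$ to bring $w$ into the shared-vertex sequence. The hypothesis $|E(G)| \ge 3$ provides enough room for these adjustments; as a fallback, one can reduce to the Hamilton-cycle analogue of Harary--Nash-Williams by a gadget joining chosen endpoints of $e_1$ and $e_2$ through a new auxiliary vertex.
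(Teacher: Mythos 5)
Your forward direction is fine under the paper's literal, sequence-based definition of a trail (and the paper itself offers no proof to compare with -- the statement is quoted as known from \cite{bib:LLZ-eulerian}), but the reverse direction has a genuine gap, and it is exactly the one you flag. Under that sequence-based reading, the pair-by-pair implication you are trying to establish -- ``an internally dominating $(e_1,e_2)$-trail yields a Hamilton $e_1$--$e_2$ path of $L(G)$'' -- is simply false, so no cleverer choice of the shared vertices $v_j$ and no local rerouting of the given trail can close the argument. Take $V(G)=\{c,x,y,z,u,w\}$ and $E(G)=\{a=cx,\; b=cy,\; d=cz,\; h=cw,\; g=zu\}$. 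The sequence $(a,d,h,b)$ is an $(a,b)$-trail in the stated sense and is internally dominating: its internal edges $d,h$ meet every edge of $G$ (in particular $g$ meets $d$ at $z$). Yet $L(G)$ has no Hamilton path from $a$ to $b$, because the vertex $g$ of $L(G)$ has degree $1$ (its unique neighbour is $d$) and hence must be an end of every Hamilton path. The theorem itself is not contradicted, since this $G$ admits no internally dominating $(a,d)$-trail at all, so the hypothesis quantified over \emph{all} pairs fails; but your argument only ever uses the trail for the single pair under consideration, so it cannot be repaired in the form proposed, and the suggested fallbacks (rerouting, a gadget reduction to the Harary--Nash-Williams setting) are not developed enough to carry the load.

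The reading under which the classical proof works is that a trail is a walk traversing no edge more than once. With that reading your insertion argument closes at once and the obstacle you worry about disappears: the junction vertices $v_1,\dots,v_{k-1}$ are forced by the walk, and since $G$ has no loops each internal edge $t_i$ has its two distinct endvertices equal to $v_{i-1}$ and $v_i$, so every edge dominated by an internal edge is incident with some $v_j$. The price is paid in the forward direction, which is then no longer the triviality you describe: the edge sequence of a Hamilton path of $L(G)$ need not be traversable as a walk (in a star all edges meet only at the centre), so one must instead choose transition vertices $v_i\in f_i\cap f_{i+1}$, retain those $f_i$ with $v_{i-1}\neq v_i$ as a trail and attach $e_1$, $e_2$ at its ends; this also reveals that the edge-based phrasing of ``internally dominating'' is delicate for trails with no internal edge (the star case), where the classical statement dominates via internal \emph{vertices}. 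As written, your two directions are effectively proved under two different readings of ``trail'', and under either consistent reading one of them does not go through.
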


One way to find an internally dominating $(e_1,e_2)$-trail (where
$e_1,e_2$ are edges) is by using a connection to $X$-joins as defined
in Section~\ref{sec:claw-free}. For each edge $e$ of $G$, fix an
endvertex $u_e$ of degree at least 4 in $G$ (which exists since $G$ is
essentially 5-edge-connected). If $e_1$ and $e_2$ are edges, set
\begin{equation*}
  X(e_1,e_2) =
  \begin{cases}
    \Setx{u_{e_1},u_{e_2}} & \text{if $u_{e_1} \neq u_{e_2}$},\\
    \emptyset & \text{otherwise.}
  \end{cases}
\end{equation*}

Suppose now that the graph $G - e_1 - e_2$ happens to contain a
connected $X(e_1,e_2)$-join $J$ spanning all of $V_{\geq 4}$. By the
classical observation of Euler, all the edges of $J$ can be arranged
in a trail $T_J$ whose first edge is incident with $u_{e_1}$ and whose
last edge is incident with $u_{e_2}$. Adding $e_1$ and $e_2$, we
obtain an $(e_1,e_2)$-trail $T$ in $G$. (If $u_1 = u_2$, we use the
fact that $u_1$ is incident with an edge of $T_J$.) Since $G$ contains
no adjacent vertices of degree 3, $T$ is an internally dominating
$(e_1,e_2)$-trail.

Summing up, the Hamilton-connectedness of $L(G)$ will be established
if we can show that for every $e_1,e_2\in E(G)$, the graph $G-e_1-e_2$
contains a connected $X(e_1,e_2)$-join spanning $V_{\geq 4}$.

How to find such $X(e_1,e_2)$-joins? Recall that in
Section~\ref{sec:claw-free}, the existence of a connected dominating
eulerian subgraph of $G$ (a connected dominating $\emptyset$-join) was
guaranteed by Lemma~\ref{l:eulerian} based on the assumption that $H$
contains an even quasitree. As shown by Lemma~\ref{l:ext}, an even
quasitree in $H$ exists whenever $H$ contains a quasitree with tight
complement. A rather straightforward modification of the proofs of
these two lemmas (which we omit) leads to the following
generalization:

\begin{lemma}\label{l:ext-join}
  Let $H'$ be a 3-hypergraph containing a quasitree $\pi$ with tight
  complement, and let $X\subseteq V(H')$. Then there is a quasigraph
  $\tau$ such that $E(\pi)$ and $E(\tau)$ are disjoint, and $\pi^* +
  \tau^*$ is a connected $X$-join in $\gr{H'}$ spanning all vertices in
  $V(H')$.
\end{lemma}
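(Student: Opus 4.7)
The plan is to mirror the combined arguments of Lemmas~\ref{l:ext} (together with its engine, Lemma~\ref{l:ext1}) and \ref{l:eulerian}, replacing the parity condition ``even'' by a parity condition adapted to the target vertex set $X$. For a set $Y\sub V(H')$, define
\begin{equation*}
  \Phi_\pi^X(Y) \equiv \sum_{v\in Y} d_{\pi^*}(v) + \size{Y\cap X} \pmod 2,
\end{equation*}
and call a quasitree $\pi$ in $H'$ \emph{$X$-even on $Y$} if $\Phi_\pi^X(V(K))=0$ for every component $K$ of $\barr\pi$ with $V(K)\sub Y$. The existence of a connected $X$-join in $\gr{H'}$ requires $\size X$ to be even (and the two applications in Section~\ref{sec:ham-conn} satisfy $\size X\in\Setx{0,2}$), so via the handshake identity $\Phi_\pi^X(V(H'))=0$ is automatic.

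The first step is to produce a quasitree $\rho$ with $E(\rho)=E(\pi)$ that is $X$-even on $V(H')$. Here I would carry out the induction of Lemma~\ref{l:ext1} almost verbatim, proving: whenever $\pi$ is a quasitree in $H'$ such that $\pi[Y]$ has tight complement in $\inner{H'}\pi Y$ and $\Phi_\pi^X(Y)=0$, there is a $Y$-similar quasitree $\rho$ which is $X$-even on $Y$. In the inductive step, split $Y=Y_1\cup Y_2$ via clause~(b) of the definition of tight complement; if $\Phi_\pi^X(Y_1)=0$, apply the induction hypothesis first on $Y_1$ and then on $Y_2$; otherwise, first perform the same ``switch'' of the crossing hyperedge as in Lemma~\ref{l:ext1}, which flips the $\pi^*$-degree parity of exactly one vertex in $Y_1$ without altering $\size{Y_1\cap X}$ and therefore corrects $\Phi_\pi^X(Y_1)$ to $0$.

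The second step is to complete $\rho$ to $\tau$. For each component $K$ of $\barr\rho$, set
\begin{equation*}
  Y_K = \Set{v\in V(K)}{d_{\rho^*}(v) + [v\in X] \equiv 1 \pmod 2},
\end{equation*}
so that $\size{Y_K}$ is even by the $X$-evenness of $\rho$. As in the proof of Lemma~\ref{l:eulerian}, choose a spanning tree $T$ of the connected graph $\gr K$ and use Lemma~\ref{l:join} to obtain a $Y_K$-join $T'$ in $T$; the crucial point is that each auxiliary vertex $v_e$ of $\gr K$ lies outside $Y_K$ and hence has even degree in $T'$, so $T'$ translates into a quasigraph $\tau_K$ in $K$. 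Collecting the $\tau_K$ over components of $\barr\rho$ yields $\tau$; the edge-set disjointness claims (between $E(\rho)$ and $E(\tau)$ and between the $\tau_K$'s) are established exactly as in Lemma~\ref{l:eulerian}. A direct parity check in $\rho^* + \tau^*$ shows that the odd-degree vertices in $\gr{H'}$ are precisely the elements of $X$, and connectedness and spanning of $V(H')$ are inherited from $\rho^*$ together with the construction, as before.

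The only potentially delicate point is the parity-switching step in the induction: one has to be sure that the switch operation, which flips a single $\pi^*$-degree inside $Y_1$, correctly corrects $\Phi_\pi^X(Y_1)$ rather than some other parity. Since $\size{Y_1\cap X}$ is unaffected by changing a representation, this is immediate, and no new idea beyond carrying the extra term $\size{Y\cap X}$ through the argument is needed; otherwise the proof is entirely parallel to those of Lemmas~\ref{l:ext1}, \ref{l:ext}, and \ref{l:eulerian}.
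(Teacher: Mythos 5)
Your proposal is correct and follows essentially the route the paper intends: the paper omits the proof of this lemma, describing it as a straightforward modification of Lemmas~\ref{l:ext1}/\ref{l:ext} and~\ref{l:eulerian}, and your adapted parity functional $\Phi_\pi^X$ together with the $Y_K$-join construction via Lemma~\ref{l:join} is exactly that modification. You are also right to flag the implicit hypothesis that $\size X$ is even (satisfied in the applications with $\size X\in\Setx{0,2}$), without which no $X$-join can exist.
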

Roughly speaking, Lemma~\ref{l:ext-join} will reduce our task to
showing that for each pair of edges $e_1,e_2$ of $G$, a suitably
defined 3-hypergraph $H'$ admits a quasitree with tight complement.

Let us define the 3-hypergraph $H'$ to which Lemma~\ref{l:ext-join} is
to be applied. Suppose that $e_1$ and $e_2$ are given edges of $G$,
and let $w_i$ ($i=1,2$) be the endvertex of $e_i$ distinct from $u_i$.
We distinguish two cases:
\begin{enumerate}[\quad(1)]
\item if $e_1$ and $e_2$ have a common vertex of degree 3 (namely, the
  vertex $w_1 = w_2$), then $H'$ is obtained from $H$ by removing the
  3-hyperedge corresponding to $w_1$;
\item otherwise, $H'$ is the hypergraph obtained by performing the
  following for $i=1,2$:
  \begin{enumerate}
  \item[(2a)] if $w_i$ has degree 3, then the 3-hyperedge $e_{w_i}$ of $H$
    correponding to $w_i$ is replaced by the 2-hyperedge $e_{w_i} -
    \Setx{u_i}$,
  \item[(2b)] otherwise, the 2-hyperedge $e_i$ of $H$ is deleted.
  \end{enumerate}
\end{enumerate}

By Lemma~\ref{l:ext-join} and the preceding remarks, it suffices to
show that $H'$ admits a quasitree with tight complement. To do so, we
apply to $H'$ the proof of Theorem~\ref{t:spanning-tree}, which works
well as far as equation~\eqref{eq:both}. However, the
inequality~\eqref{eq:verts} may fail since $H'$ is not necessarily
4-edge-connected. It has to be replaced as follows. 

For an arbitrary hypergraph $H^*$, let $s(H^*)$ be the sum of all
vertex degrees in $H^*$. Let $\PP$ be the partition of $V(H')$
obtained in the proof of Theorem~\ref{t:spanning-tree}. Furthermore,
let $n^*_4$ be the number of vertices of degree 4 in $H/\PP$, and let
$n^*_{5^+} = n - n^*_4$. (All the symbols such as $n$, $m$, $m_3$ etc.,
used in the proof of Theorem~\ref{t:spanning-tree}, are now related to
the hypergraph $H'$ rather than $H$.)

It is not hard to relate $s(H')$ to $s(H)$. Indeed, the operations in
cases (1), (2a) and (2b) above decrease the degree sum by 3, 1 and 2,
respectively. It follows that $s(H') \geq s(H) - 4$ and, in fact,
\begin{equation*}
  s(H'/\PP) \geq s(H/\PP) - 4.
\end{equation*}
Since $H$ is 4-edge-connected, we know that
\begin{equation*}
  s(H/\PP) \geq 4n^*_4 + 5n^*_{5^+}
\end{equation*}
and thus we can replace~\eqref{eq:verts} by
\begin{equation*}
  4n^*_4 + 5n^*_{5+} - 4 \leq s(H'/\PP) = 2m + m_3 + \barr{m_3}.
\end{equation*}
This eventually leads to
\begin{equation*}
  m_3 \geq \barr{m_3} + n^*_{5^+} + 2
\end{equation*}
as a replacement for \eqref{eq:star}. Thus, the contradiction is much
the same as before, since we have (by the same argument as in the old
proof) that $m_3 \leq n^*_{5^+}$. This proves Theorem~\ref{t:ham-conn}
in the case of line graphs.

If $G$ is a claw-free graph, we will use a closure operation
again. However, the claw-free closure described in
Section~\ref{sec:claw-free} is not applicable, since the closure of
$G$ may be Hamilton-connected even if $G$ is not. Instead, we use the
\emph{$M$-closure} which was defined in~\cite{bib:RV-line} and applied
there to prove that 7-connected claw-free graphs are
Hamilton-connected. Let us list its relevant properties \cite[Theorem
9]{bib:RV-line}:
\begin{theorem}
  If $G$ is a connected claw-free graph, then there is a well-defined
  graph $cl^M(G)$ with the following properties:
  \begin{enumerate}[\quad(i)]
  \item $G$ is a spanning subgraph of $cl^M(G)$,
  \item $cl^M(G)$ is the line graph of a multigraph $H$,
  \item $cl^M(G)$ is Hamilton-connected if and only if $G$ is
    Hamilton-connected. 
  \end{enumerate}
\end{theorem}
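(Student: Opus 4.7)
The plan is to mimic Ryj\'a\v{c}ek's proof of Theorem~\ref{t:closure}, but with a stricter local completion rule designed to preserve Hamilton-connectedness rather than mere hamiltonicity. In Ryj\'a\v{c}ek's construction, a vertex $v$ of a claw-free graph is called \emph{locally connected} if $N(v)$ induces a connected graph, and the \emph{local completion} at $v$ replaces $G[N(v)]$ by a clique; this operation preserves the circumference of the graph but may destroy Hamilton-connectedness, because a Hamilton $(a,b)$-path in the completed graph need not correspond to such a path in $G$.

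First I would introduce a stricter notion of \emph{$M$-eligibility}: call $v$ \emph{$M$-eligible} if $v$ is locally connected and an additional combinatorial condition holds on $(v,N(v))$ guaranteeing that every Hamilton $(a,b)$-path in the graph $G_v$ obtained by local completion at $v$ can be rerouted, through $v$ and along edges of $G$, to a Hamilton $(a,b)$-path in $G$. The technical core of \cite{bib:RV-line} would be to pin down such a condition---likely expressible in terms of the existence, for every pair of vertices in $N(v)$, of suitable internally disjoint paths inside $G[N[v]]$---so that the desired rerouting is always available regardless of the fixed endpoints $a$ and $b$.

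With $M$-eligibility in hand, the three assertions of the theorem would be established by induction on the number of local completions needed to reach $cl^M(G)$. Assertion (i) is immediate since each completion only adds edges. For assertion (iii), one direction is immediate from (i), and the other is exactly the invariant built into the eligibility definition. Well-definedness (independence of the order of completions) would be proved by a standard Ryj\'a\v{c}ek-type confluence argument: show that if $v$ and $w$ are both $M$-eligible in $G$, then $w$ remains $M$-eligible in the graph $G_v$, so the two orderings of the completions yield the same result; a diamond lemma then propagates this to the full closure. For assertion (ii), I would argue that as long as the current graph contains an induced configuration forbidden by the standard characterization of line graphs of multigraphs, some vertex incident with that configuration is still $M$-eligible, so the closure process cannot terminate until no such forbidden configuration remains; hence $cl^M(G)$ is the line graph of a multigraph $H$.

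The main obstacle is the first step: formulating the correct $M$-eligibility condition. It has to be strong enough that completing at an $M$-eligible vertex cannot introduce any spurious Hamilton $(a,b)$-path, yet weak enough that enough vertices are eligible to force the terminal graph into the class of line graphs of multigraphs. Moreover, to obtain confluence one needs the eligibility condition to be preserved under other $M$-completions, and this compatibility is the technical heart of the argument; once it is established, the remaining steps follow by direct adaptations of the arguments in \cite{bib:Ryj-closure}.
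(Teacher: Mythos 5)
This statement is not proved in the paper at all: it is quoted verbatim as Theorem~9 of~\cite{bib:RV-line}, so there is no internal argument to compare against. Judged on its own terms, your proposal is a research plan rather than a proof, and the gap is exactly the one you name yourself: the entire construction hinges on a definition of ``$M$-eligibility'' that you never supply. Every subsequent step --- that local completion at an $M$-eligible vertex preserves Hamilton-connectedness in both directions, that eligibility is preserved under other completions (confluence), and that a graph with no $M$-eligible vertex is a line graph of a multigraph --- is stated conditionally on that missing definition. Since the condition is the technical heart of~\cite{bib:RV-line} (where it occupies most of the paper), deferring it means nothing has actually been established. In particular, the crucial direction of (iii), that a Hamilton $(a,b)$-path in the completed graph can be rerouted to one in $G$, is precisely the claim that fails for the ordinary Ryj\'a\v{c}ek local completion, so one cannot wave at ``a suitable strengthening'' and expect the rest to go through; the existence of a condition that is simultaneously strong enough for rerouting, weak enough to force termination in the class of line graphs of multigraphs, and stable under other completions is the theorem, not a lemma on the way to it.

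One further point worth registering: your plan for (ii) implicitly targets the characterization of line graphs of \emph{simple} graphs (as in Theorem~\ref{t:closure}(ii), where the closure is a line graph of a triangle-free graph). The $M$-closure deliberately lands in the strictly larger class of line graphs of \emph{multigraphs}, because the stricter completion rule stops earlier; your termination argument would need the forbidden-structure characterization for that larger class, and the claim that some vertex of any remaining forbidden configuration is still $M$-eligible is again something that can only be checked once the eligibility condition is written down. As it stands, the proposal correctly describes the architecture of the known proof but supplies none of its content.
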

Using this result (and the fact that parallel edges are allowed
throughout our argument), it is easy to prove Theorem~\ref{t:ham-conn}
just like Corollary~\ref{cor:claw-free} is proved using the claw-free
closure.

%%%%%%%%%%%%%%%%%%%%%%%%%%%%%%%%%%%%%%%%%%%%%%%%%%%%%%%%%%%%%%%%%%%%%%

\section{Conclusion}
\label{sec:conclusion}

We have developed a method for finding dominating eulerian subgraphs
in graphs, based on the concept of a quasitree with tight
complement. Using this method, we have made some progress on
Conjecture~\ref{conj:thomassen}, although the conjecture itself is
still wide open. It is conceivable that a refinement in some part of
the analysis may improve the result a bit --- perhaps to all
5-connected line graphs. On the other hand, the 4-connected case would
certainly require major new ideas. For instance, the preimage $G$ of a
4-connected line graph may be cubic, in which case we do not even know
how to associate a 3-hypergraph with $G$ in the first place.

As mentioned in Section~\ref{sec:intro}, a simpler variant of our
method yields a short proof of the tree-packing theorem of Tutte and
Nash-Williams. It is well known that spanning trees in a graph $G$ are
the bases of a matroid, the cycle matroid of $G$, and thus matroid
theory provides a very natural setting for the tree-packing
theorem. Interestingly, quasitrees with tight complement do not quite
belong to the realm of matroid theory, although quasitrees themselves
do. Is there an underlying abstract structure, more general than the
matroidal one, which forms the `reason' for the existence of both
disjoint spanning trees in graphs, and quasitrees with tight
complement in hypergraphs?

It remains a question for further research whether our approach may be
useful for other problems on the packing of structures similar to
spanning trees, but also lacking their matroidal properties. These
include the packing of Steiner trees
\cite{bib:Kri-edge-disjoint-03,bib:Kri-edge-disjoint-09} or $T$-joins
\cite{bib:DS-packing,bib:Riz-indecomposable}.

%%%%%%%%%%%%%%%%%%%%%%%%%%%%%%%%%%%%%%%%%%%%%%%%%%%%%%%%%%%%%%%%%%%%%%

\section*{Acknowledgments}

We thank Zden\v{e}k Ryj\'{a}\v{c}ek for many discussions of
hamiltonicity and the properties of claw-free graphs. A number of
other colleagues at the department influenced our ideas on these
topics as well. The first author would also like to acknowledge the
inspiring influence of some of the places where this research was
undertaken, particularly Polenztal in Germany; Doma\v{z}lice,
\'{U}ter\'{y} and Jahodov in Czech Republic; and Terchov\'{a} in
Slovakia.

We would like to thank an anonymous referee who read the paper
carefully and suggested a number of improvements and corrections. 

%%%%%%%%%%%%%%%%%%%%%%%%%%%%%%%%%%%%%%%%%%%%%%%%%%%%%%%%%%%%%%%%%%%%%%

\end{document}